\setlist[enumerate,1]{label=\textup{(\arabic*)}}
\renewcommand*{\PrintDOI}[1]{\href{http://dx.doi.org/\detokenize{#1}}{doi: \detokenize{#1}}}
\newcommand{\comment}[1]{}  
\theoremstyle{plain}
\newtheorem{theorem}{Theorem}[section]
\newtheorem*{theorem*}{Theorem}
\newtheorem*{corollary*}{Corollary}
\newtheorem{lemma}[theorem]{Lemma}
\newtheorem{corollary}[theorem]{Corollary}
\newtheorem{proposition}[theorem]{Proposition}
\theoremstyle{remark}
\newtheorem{remark}[theorem]{Remark}
\theoremstyle{definition}
\newtheorem{definition}[theorem]{Definition}
\newtheorem{example}[theorem]{Example}
\numberwithin{theorem}{section}
\newcommand\N{\mathbb N}
\newcommand\Q{\mathbb Q}
\newcommand\R{\mathbb R}
\newcommand\Z{\mathbb Z}
\newcommand{\coma}{\widehat}
\newcommand{\comb}{\overbracket[.7pt][1.4pt]}
\newcommand*{\Fil}{\mathcal F}
\newcommand{\diff}{\mathrm{d}}
\newcommand{\defeq}{\mathrel{:=}} 
\newcommand*{\into}{\rightarrowtail}
\newcommand*{\onto}{\twoheadrightarrow}
\newcommand*{\ling}[1]{#1_\mathrm{lg}}
\DeclarePairedDelimiter{\abs}{\lvert}{\rvert}
\DeclarePairedDelimiterX{\setgiven}[2]{\{}{\}}{#1\,{:}\,\mathopen{}#2}
\newcommand\hot{\mathbin{\comb{\otimes}}}
\DeclareMathOperator{\Tor}{Tor}
\DeclareMathOperator{\HA}{HA}
\newcommand{\Hom}{\mathsf{Hom}}
\newcommand{\HOM}{\mathsf{HOM}}
\newcommand*{\an}{\mathrm{an}}
\newcommand{\Mod}{\mathsf{Mod}}
\newcommand{\fC}{\mathsf{C}}
\newcommand{\op}{\mathrm{op}}
\newcommand{\tf}{\mathrm{tf}}
\newcommand{\dvr}{V}
\newcommand{\dvgen}{\pi}
\newcommand{\dvf}{F}
\newcommand{\resf}{\mathbb F}
\DeclareMathOperator{\Ext}{Ext}
\DeclareMathOperator{\HP}{HP}
\DeclareMathOperator{\HH}{HH}
\begin{document}
\title{Homological algebra for nonarchimedean completions of group algebras}

\begin{abstract}
Let \(\dvr\) be a complete discrete valuation ring, and let \(G\) be either a word-hyperbolic group or a reductive \(p\)-adic group. We prove that the canonical morphism \(\dvr[G] \to \dvr[G]^\dagger\) from the group algebra to its dagger completion is an isocohomological embedding. This allows us to deduce homological properties about the algebra \(\dvr[G]^\dagger\) in terms of the relatively simpler group algebra of \(G\). These properties are then used towards Hochschild and cyclic homology computations. 
\end{abstract}

\author{Devarshi Mukherjee}

\address{Dep. Matemática-IMAS\\
 FCEyN-UBA, Ciudad Universitaria Pab 1\\
 1428 Buenos Aires\\ 
Argentina}

\email{dmukherjee@dm.uba.ar}

\thanks{The author thanks Guillermo Corti\~nas and Ralf Meyer for helpful discussions, and the hospitality of the IMAS, where this research was conducted. The author was funded by a Feodor Lynen Fellowship of the Alexander von Humboldt Foundation.}

\maketitle

\section{Introduction}

Let \(\dvr\) be a complete discrete valuation ring with uniformiser \(\dvgen\), fraction field \(\dvf\) and residue field \(\resf\).  In this article, we continue our program of noncommutative geometry in mixed characteristic, building on \cites{Cortinas-Cuntz-Meyer-Tamme:Nonarchimedean, Cortinas-Meyer-Mukherjee:NAHA, Meyer-Mukherjee:HA}. These papers have so far dealt with the definitions and properties of appropriate cyclic homology and \(K\)-theories, leading to computations that are accessible primarily from the formal properties of these invariants. Our main examples for which we have complete computations of analytic or periodic cyclic homology include algebras of Hochschild cohomological dimension \(1\), that is, \textit{quasi-free} algebras, including smooth curves and Leavitt path algebras in the commutative and noncommutative cases, respectively. The reason why computations of analytic and periodic cyclic homology for quasi-free algebras are simpler is because for such algebras, Goodwillie's Theorem implies that these theories coincide with the Cuntz-Quillen \(X\)-complex \cite{cuntz1995cyclic}, which is a very small quotient of the periodic cyclic homology complex. 

This article takes a first step towards cyclic homology computations that do not follow from purely formal properties of cyclic homology theory. Needless to mention, such computations are significantly more difficult, and there is no one-fits-all approach (just as in the archimedean setting). Nevertheless, we describe a relatively formal method that enables us to go reasonably far in the computation of periodic cyclic homology for completed \textit{group algebras} of two large classes of groups, namely word-hyperbolic groups and reductive \(p\)-adic groups. The main motivation to study the cyclic homology of group algebras for such groups is that these are precisely the groups for which one can say something meaningful in the context of the Baum-Connes or Farrel-Jones conjectures. Cyclic homology and \(K\)-theoretic tools have previously been used in the archimedean setting by Puschnigg to prove the Kadison-Kaplansky conjecture \cite{puschnigg2002kadison} for word-hyperbolic groups. The ultimate aim of this part our program is to prove the Kaplansky conjecture for the group ring \(\resf[G]\) of a countable group, using completions of the lifting \(\dvr[G]\), and thereafter Puschnigg's techniques involving local cyclic homology and analytic \(K\)-theory, developed in \cites{Meyer-Mukherjee:HL, mukherjee2022nonarchimedean}.

Hochschild homology computations of a (unital) algebra \(A\) typically rely on the knowledge of an explicit (and short) projective bimodule resolution \(P_\bullet \to A \to 0\). For group rings \(\dvr[G]\), such resolutions arise from geometric properties of the underlying group \(G\). Unfortunately, however, the availability of such resolutions does not supply adequate information to compute the Hochschild homology of a \emph{completion} \(A \to B\) of such an algebra. For instance, the ideal candidate for a projective \(B\)-bimodule resolution \(B \hot_A P_\bullet \hot_A B \to B\) may fail to be exact in the sense that there is a bounded \(\dvr\)-linear contracting homotopy. Note that this notion of exactness does not follow from the flatness of \(B\) over \(A\). Following \cite{meyer2004embeddings}, we call a morphism \(f \colon A \to B\) \textit{isocohomological} if the induced complex \(B \hot_A P_\bullet \hot_A B \to B\) is exact. The completion most suited to homotopy invariant cohomology theories in mixed characteristic is the \textit{dagger completion} \(A^\dagger\) of a complete, torsionfree bornological \(\dvr\)-algebra. With this notation in place, one of our main results in this article is the following:

\begin{theorem*}
Let \(G\) be a word-hyperbolic group or a reductive \(p\)-adic group. Then the canonical map \(\dvr[G] \to \dvr[G]^\dagger\) is an isocohomological embedding. 
\end{theorem*}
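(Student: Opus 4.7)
The plan is to adapt to the nonarchimedean dagger setting the strategy that Meyer developed in \cite{meyer2004embeddings} for proving isocohomological embeddings of smooth group algebras, replacing the role of rapid decay with the bornological convergence conditions built into the dagger completion. By the characterisation of isocohomological morphisms recalled in the introduction, it suffices to exhibit one bornologically projective \(\dvr[G]\)-bimodule resolution \(P_\bullet \to \dvr[G]\) such that the induced complex
\[
\dvr[G]^\dagger \hot_{\dvr[G]} P_\bullet \hot_{\dvr[G]} \dvr[G]^\dagger \longrightarrow \dvr[G]^\dagger
\]
is exact with a bounded \(\dvr\)-linear contracting homotopy. Thus the problem splits into (a) finding a convenient geometric resolution of \(\dvr[G]\), and (b) verifying that its canonical contracting homotopy survives dagger completion.

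For step (a) the plan is to use the classical geometric resolutions attached to a proper cocompact action of \(G\) on a contractible finite-dimensional simplicial complex. For a word-hyperbolic group \(G\), take the Rips complex \(P_d(G)\) for \(d\) larger than a constant depending on the hyperbolicity constant; its simplicial chain complex tensored with \(\dvr[G]\) on the right gives a finite free \(\dvr[G]\)-bimodule resolution of \(\dvr[G]\). For a reductive \(p\)-adic group \(G\), one uses the affine Bruhat-Tits building and the Schneider-Stuhler construction, which produces a finite resolution by finitely generated projective \(\dvr[G]\)-bimodules induced from compact open subgroups. In both cases, a canonical \(\dvr\)-linear contracting homotopy \(h\) is furnished by geodesic retraction to a chosen vertex: for a chain basis element supported on a simplex \(\sigma\), \(h(\sigma)\) is described by coning \(\sigma\) along the (coarse) geodesic from \(\sigma\) to the base vertex.

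The core work is step (b): one must check that on each basis element, the contracting homotopy only involves group elements whose word length (or Bruhat-Tits distance) is linearly controlled by the word length of the input. For word-hyperbolic \(G\), thinness of geodesic triangles and the linear isoperimetric inequality imply that the retraction of a cone of diameter \(n\) uses only elements of word length \(O(n)\); for reductive \(p\)-adic \(G\), the CAT(0) geodesic retraction on the building has the analogous linear displacement bound, and the compact open isotropy groups are absorbed harmlessly by the \(\dvr\)-linearity. The dagger completion \(\dvr[G]^\dagger\) is precisely designed to accommodate series \(\sum a_g g\) whose coefficients satisfy \(\abs{a_g}\le C r^{-\ell(g)}\) for some \(r<1\), so that operators whose matrix coefficients grow at most linearly in word length (on the level of supports) are bounded for the natural bornology. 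Hence \(\id \hot h \hot \id\) extends to a bounded \(\dvr\)-linear contracting homotopy after base change to \(\dvr[G]^\dagger\), giving the required exactness.

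The main obstacle, and the place where the two cases genuinely call for separate geometric inputs, is step (b), namely the bornological boundedness of the contracting homotopy on \(\dvr[G]^\dagger\). In the hyperbolic case the estimate relies on \(\delta\)-thinness and the linear Dehn function; in the \(p\)-adic reductive case, it relies on the CAT(0) geometry of the building and on the compactness of stabilisers together with the identification of induced modules from compact opens as projective bimodules. Once these geometric length estimates are in place, embeddingness of \(\dvr[G]\injto\dvr[G]^\dagger\) (which is immediate from the definition of the dagger completion on a torsionfree bornological algebra) together with the preserved exactness yields the isocohomological embedding claim.
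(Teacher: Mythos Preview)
Your overall strategy---build a geometric resolution coming from a contractible \(G\)-space and check that a geodesic-type contracting homotopy survives dagger completion---is the paper's strategy as well, and your identification of the key analytic input (linear control on word lengths along the retraction, so that the homotopy is bounded in the dagger bornology) is correct. But there is a genuine gap in the passage from a one-sided resolution to the bimodule statement.

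The map you write as ``\(\id \hot h \hot \id\)'' is not well-defined: \(h\) is only \(\dvr\)-linear, not \(\dvr[G]\)-bilinear, so it does not descend through the balanced tensor products \(\hot_{\dvr[G]}\) on either side. If \(P_\bullet \to \dvr[G]\) is your bimodule resolution with \(\dvr\)-linear splitting \(h\), there is no a priori reason for \(\dvr[G]^\dagger \hot_{\dvr[G]} P_\bullet \hot_{\dvr[G]} \dvr[G]^\dagger \to \dvr[G]^\dagger\) to inherit a bounded contracting homotopy from \(h\). (Relatedly, ``the simplicial chain complex tensored with \(\dvr[G]\) on the right'' resolves \(\dvr[G]\) with the wrong bimodule structure unless one intervenes.) The paper handles this by first reducing the bimodule condition to a one-sided one: because \(\dvr[G]^\dagger\) is a \emph{symmetric} convolution algebra (the shear \(U\phi(g,h) = \phi(gh,g)\) is a bounded automorphism of \(\dvr[G]^\dagger \hot \dvr[G]^\dagger\)), one has \(\dvr[G]^\dagger \hot_{\dvr[G]}^{\mathbb{L}} \dvr[G]^\dagger \simeq \dvr[G]^\dagger\) if and only if \(\dvr[G]^\dagger \hot_{\dvr[G]}^{\mathbb{L}} \dvr \simeq \dvr\). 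Only after this reduction does one check that the geodesic/combing homotopy on the bar complex is bounded, which is now a purely one-sided estimate and is where your growth bounds enter.

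A secondary organisational difference: the paper works with the bar complex filtered by a control radius \(R\) (respectively, by finite subsets of \(G/H\) in the reductive case) and takes the colimit, rather than fixing a single Rips scale or the cellular model of the building. The combing homotopy is allowed to enlarge \(R\), which is harmless in the colimit but would require a separate argument if you pin down one finite complex. The finite Rips and Bruhat--Tits resolutions do appear in the paper, but only later, to bound the Hochschild bidimension---not to prove isocohomologicality.
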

 
The method of using isocohomological embeddings has previously been used by Meyer \cites{Meyer:Combable, meyer2004embeddings, meyer2006homological} and Solleveld  \cite{solleveld2009periodic} in the archimedean case towards group cohomology and cyclic homology computations for the same class of group algebras. They have also recently appeared in analytic geometry (see \cites{kazhdan2022some,ben2022analytification,aristov2020open}), wherein the notion goes by a plethora of different names, such as homotopy epimorphisms, homological epimorphisms, pseudoflat epimorphisms, etc. A morphism \(A \to B\) being isocohomological has several interesting formal consequences: for instance, the \(\Ext\) and \(\Tor\)-groups relative to \(A\) and \(B\) coincide; if \(A\) has finite projective bidimension, so does \(B\). The latter property has the following consequence for periodic cyclic homology:

\begin{theorem*}
Let \(G\) be word-hyperbolic or a reductive \(p\)-adic group. Then \[\mathsf{HP}(\dvr[G]^\dagger \otimes \dvf) \simeq X^{(n)}(\dvr[G]^\dagger \otimes \dvf)\] for some \(n\). Here \(X^{(n)}\) denotes the quotient of the periodic cyclic complex by the \(n\)-th Hodge filtration.
\end{theorem*}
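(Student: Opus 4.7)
The plan is to derive this as a formal consequence of the preceding isocohomological embedding theorem together with the standard Cuntz--Quillen Hodge tower on the periodic cyclic complex. The first step is to observe that for both classes of groups, the group algebra \(\dvr[G]\) has finite Hochschild bidimension. For a word-hyperbolic group \(G\), the Rips complex provides a finite-dimensional contractible \(G\)-CW model for \(EG\), whose cellular chains (passed through the bar construction) give a finite length resolution of \(\dvr[G]\) by free \(\dvr[G]\)-bimodules of the form \(\dvr[G] \otimes \dvr[G/H] \otimes \dvr[G]\) for finite subgroups \(H\). For a reductive \(p\)-adic group, the Bruhat--Tits building plays the same role. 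In either case there is an integer \(n\) such that \(\dvr[G]\) admits a projective bimodule resolution of length at most \(n\).

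By the previous theorem, \(\dvr[G] \to \dvr[G]^\dagger\) is an isocohomological embedding. As recalled in the introduction, a direct formal consequence is that finite projective bidimension is inherited by the target: given a resolution \(P_\bullet \to \dvr[G]\) of length \(n\), the complex \(\dvr[G]^\dagger \hot_{\dvr[G]} P_\bullet \hot_{\dvr[G]} \dvr[G]^\dagger \to \dvr[G]^\dagger\) is exact by the isocohomological property, and each term is a projective \(\dvr[G]^\dagger\)-bimodule because the bimodule structure is induced from the free bimodules appearing in \(P_\bullet\). Inverting \(\dvgen\) preserves both projectivity and the length bound, so \(\dvr[G]^\dagger \otimes \dvf\) has Hochschild bidimension at most \(n\).

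The final step is to convert this bidimension bound into the claimed equality of periodic cyclic complexes. The Cuntz--Quillen periodic cyclic complex \(\mathsf{HP}(B)\) of a quasi-free extension of \(B\) carries a decreasing Hodge filtration \(F^\bullet\) such that \(X^{(n)}(B) = \mathsf{HP}(B)/F^n\), and the subquotients of this filtration are controlled by Hochschild homology in degrees \(\geq n\). When \(B = \dvr[G]^\dagger \otimes \dvf\) has bidimension at most \(n\), these higher Hochschild groups vanish, so \(F^n\) is acyclic and the quotient map \(\mathsf{HP}(B) \onto X^{(n)}(B)\) is a quasi-isomorphism.

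The principal obstacle is verifying that the Hodge tower argument functions correctly in the bornological dagger setting over \(\dvr\). One must confirm that the bornological \(\Tor\)-groups computed from the short resolution transferred from \(\dvr[G]\) really agree with the Hochschild homology groups that feed into the filtration quotients of \(\mathsf{HP}\); this is precisely where isocohomologicity is used in an essential way. One also has to pass carefully to the rationalization \(\otimes \dvf\), since vanishing modulo \(\dvgen\)-torsion is what makes the Hodge truncation collapse; luckily the statement is formulated after inverting \(\dvgen\), which is exactly the regime where the Cuntz--Quillen mechanism is known to apply.
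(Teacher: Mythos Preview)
Your overall strategy matches the paper's: establish finite bidimension for the group algebra, transfer it to the dagger completion via the isocohomological embedding, then invoke the Cuntz--Quillen Hodge argument. There is, however, a genuine gap in your first step. You assert that \(\dvr[G]\) itself has finite Hochschild bidimension and that the bimodules produced from the Rips complex are \emph{free} of the form \(\dvr[G]\otimes\dvr[G/H]\otimes\dvr[G]\). That is not what the construction yields: passing from a \(\dvr[G]\)-module resolution \(P_\bullet\to\dvr\) to a bimodule resolution via the symmetric structure produces terms \(P_n\otimes\dvr[G]\) with the twisted diagonal action (equivalently \(\dvr[G]\otimes_{\dvr[H]}\dvr[G]\)), and these are projective bimodules only when the stabiliser order \(|H|\) is invertible in \(\dvr\). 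For a hyperbolic group with \(p\)-torsion, \(p\) the residue characteristic of \(\dvr\), this fails; already for \(G=\Z/p\) one has \(\HH_n(\dvr[G])\cong H_n(\Z/p;\dvr[G])\neq 0\) for all odd \(n\), so \(\dvr[G]\) has infinite bidimension in the relative sense used here. The analogous obstruction from compact open stabilisers arises in the reductive \(p\)-adic case.

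The paper circumvents this by inverting \(\dvgen\) at the \emph{beginning} rather than at the end. It proves finite bidimension for \(\dvf[G]\) (Lemma~\ref{lem:global-dim}), where characteristic zero makes the permutation modules projective and, in the hyperbolic case, allows the anti-symmetrisation step in Puschnigg's construction of the truncation \(\theta\). It then notes that \(\dvf[G]\to\dvr[G]^\dagger\otimes\dvf\) is isocohomological because the tensor product of the isocohomological map \(\dvr[G]\to\dvr[G]^\dagger\) with the identity on \(\dvf\) remains isocohomological, and only then applies Corollary~\ref{cor:HP-finite}. Your argument is easily repaired by making this same switch of base to \(\dvf\) before invoking the Rips or Bruhat--Tits resolution.
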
 

In general, the simplification to the \(X^{(n)}\)-complex is the limit of what one can achieve using solely the isocohomological property of a morphism. One then directly compares the \(X^{(n)}\)-complex of the completed group algebra with the incomplete group algebra as in \cite{puschnigg2002kadison}. This will be taken up in the sequel. The projective dimension based methods do however allow us to perform the following complete computations:


\begin{theorem*}
\begin{enumerate}
\item Let \(\abs{\lambda} = 1\) and \(A_\lambda\) denote the \(p\)-adic noncommutative torus, defined as the \(\dvr\)-algebra generated by \(U_1^{\pm 1}\), \(U_2^{\pm 1}\) under the relations \(U_1 U_2 = \lambda U_2 U_1\). Then \[\HA_*(A_\lambda) \cong \HP_*(A_\lambda \otimes \dvf) \cong \dvf^2 \oplus \dvf^2[1]\] for \(* = 0,1\).
\item For \(G = \varprojlim G_n\) a countable profinite group and \(\Lambda(G) = \varprojlim_n \dvr[G_n]\) the pro-Iwasawa algebra. Then we have the following weak equivalences \[\mathsf{HP}(\Lambda(G) \otimes \dvf) \simeq \varprojlim_n \mathsf{HP}(\dvf[G_n]) \simeq \varprojlim_n (\dvf[G_n]/[-,-]).\] 
\end{enumerate}  
\end{theorem*}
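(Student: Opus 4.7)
The noncommutative torus $A_\lambda$ is an iterated Ore extension and admits an explicit length-$2$ Koszul-type projective bimodule resolution
\[
0 \to A_\lambda \hot A_\lambda \to (A_\lambda \hot A_\lambda)^{\oplus 2} \to A_\lambda \hot A_\lambda \to A_\lambda \to 0,
\]
with differentials built from the operators $U_i \otimes 1 - \lambda^{\pm 1}\cdot 1 \otimes U_i$. The first step is to produce a bornological contracting homotopy that survives the base change along $A_\lambda \to A_\lambda^\dagger$, so that this map is isocohomological; this is an analogue of, rather than a direct corollary of, the main theorem of the paper, since $\Z^2$ is combable but not word-hyperbolic, but it is tractable because the Koszul resolution is so explicit. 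Granting this, $A_\lambda^\dagger$ has Hochschild bidimension at most $2$, so by the second theorem in the introduction $\HP(A_\lambda^\dagger \otimes F) \simeq X^{(2)}(A_\lambda^\dagger \otimes F)$. A direct computation of $X^{(2)}$ from the Koszul resolution, using that $\abs{\lambda}=1$ forces each $1-\lambda^n$ to be either zero or a unit of $V$ (and hence to split cleanly upon rationalising), yields $F^2 \oplus F^2[1]$. The identification $\HA_*(A_\lambda) \cong \HP_*(A_\lambda^\dagger \otimes F)$ then follows from the general comparison theorem between analytic cyclic homology and rationalised periodic cyclic homology of the dagger completion, developed in the earlier papers of this program. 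The main obstacle is the first step: verifying the bornological split, which is not covered by the hyperbolicity hypothesis and must be carried out by hand from the combing of $\Z^2$.

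\textbf{For (2).} For the right-hand equivalence, each $F[G_n]$ is a finite-dimensional algebra over a field of characteristic zero, hence separable by Maschke's theorem; separable algebras have Hochschild bidimension $0$, so $\HP_*(F[G_n]) = \HH_0(F[G_n]) = F[G_n]/[F[G_n], F[G_n]]$ is concentrated in even degree. For the middle equivalence, the plan is to realise $\Lambda(G)\otimes F$ in the pro-category as $\text{``}\varprojlim\text{''}_n F[G_n]$ and to commute $\HP$ with this inverse limit. The transition maps $F[G_n] \onto F[G_{n-1}]$ induced by the quotients $G_n \onto G_{n-1}$ are surjective, so the tower on $\HH_0$ is Mittag-Leffler, killing $\varprojlim^1$ and collapsing the homotopy limit onto the strict one. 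The principal obstacle is justifying the commutation of the periodic cyclic complex with the inverse limit at the bornological level; this is handled by applying the main theorem termwise to each (trivially hyperbolic) finite group $G_n$, giving uniform control on projective bimodule resolutions across the tower and ensuring that the pro-system of periodic cyclic complexes computes $\HP$ of the limit.
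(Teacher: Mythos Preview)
For (1), your route diverges from the paper's in two places. For the isocohomological step, the paper does not work directly with a combing of $\Z^2$: it realises $V[\Z^2,c]^\dagger$ as $(\Z \rtimes_\alpha V[t,t^{-1}])^\dagger$ and applies a general crossed-product result (Theorem~\ref{theorem:crossed-product}), reducing to the untwisted combing of $\Z$. This sidesteps the issue you raise, since the paper's combing theorem already covers all combable groups of polynomial growth, not just hyperbolic ones; the real obstruction to applying it directly to $\Z^2$ is the cocycle twist, not hyperbolicity. For the computation, the paper first computes Hochschild homology from the Koszul resolution, deduces that $\HP_*$ of the completed and uncompleted tori agree, and then evaluates the latter via a Pimsner--Voiculescu triangle in bivariant $K$-theory rather than a direct $X^{(2)}$ calculation. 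Also, your claim that $\abs{\lambda}=1$ forces $1-\lambda^n$ to be zero or a unit \emph{of $V$} is false (take $\lambda = 1 + \pi$); the dichotomy holds only in the field $F$, which is where the paper carries out the computation.

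For (2), your treatment of the right-hand equivalence (Maschke, hence separability, hence $\HP \simeq \HH_0$) matches the paper. But your justification for commuting $\HP$ with the inverse limit has a gap: appealing to uniform bounds on projective resolutions via the main theorem for the finite $G_n$ does not by itself yield $\HP(\varprojlim) \simeq \varprojlim \HP$, and Mittag--Leffler only controls homology groups, not the chain-level weak equivalence asserted. The paper's argument is purely formal and much shorter: $\Lambda(G) \otimes F$ is the pro-object $\text{``}\varprojlim\text{''}\, F[G_n]$, and the periodic cyclic complex is built from iterated completed tensor products, which commute with projective limits in the pro-category (this is a general fact, cited from \cite{Meyer:HLHA}). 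No isocohomological input is needed here; for finite $G_n$ one has $V[G_n]=V[G_n]^\dagger$ anyway, so invoking the main theorem termwise is both unnecessary and insufficient for the limit commutation.
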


This article is organised as follows. In Section \ref{section:prelim}, we describe the sense in which we do homological algebra, following \cite{Meyer:HLHA}. Concretely, starting with a closed symmetric monoidal category \((\fC, \otimes, k)\), one can define module categories \(\mathsf{Mod}_A\) of algebra objects that carry an ``internal" notion of homological algebra: extensions are kernel-cokernel pairs of \(A\)-modules with a \(k\)-linear section. These are the extensions of a Quillen exact category structure on \(\mathsf{Mod}_A\). This exact category has enough projectives and injectives, allowing for the possibility to do homological algebra when (a) the category \(\fC\) is not abelian, which is almost always the case in functional analysis, and (b) one wants to information about periodic cyclic homology for noncommutative algebras. The nature of information one can extract is one of the interesting features of this article, and justifies the use of relative homological algebra. We define the notion of an isocohomological embedding. These are (unital) algebra homomorphisms \(A \to B\) such that for any projective bimodule resolution of \(P_\bullet \to A \to 0\) - relative to the exact category structure on \(\mathsf{Mod}_A\) just mentioned - the induced map \(B \otimes_A P_\bullet \otimes_A B \to B\) is a resolution of \(B\)-bimodules. In Theorem \ref{thm:isocohomological-equivalent}, we describe several equivalent conditions for a morphism to be isocohomological, thereby also deducing some formal consequences. One of these consequences is that if \(A\) has finite Hochschild dimension (which is often known by other methods), then so does \(B\), which implies that the periodic cyclic homology of \(B\) (if \(\fC\) is \(\Q\)-linear) is chain homotopy equivalent to the \(X^{(n)}\)-complex - generalising the result from the quasi-free case to higher dimensions. 

With the consequences of a morphism being isocohomological established, in Section \ref{section:homological-algebra-group} we prove that the canonical maps \(\dvr[G] \to \dvr[G]^\dagger\) are isocohomological embeddings. This uses the same contracting homotopies as in the archimedean case in \cite{Meyer:Combable}, which arise from suitable completions of the Rips complex in geometric group theory. Besides the reduction to the \(X^{(n)}\)-complex, our main applications by way of   complete computations is the recovery of Connes' computation of Hochschild and cyclic homology of a \(p\)-adic version of the noncommutative torus in Section \ref{section:applications-HC}. This \(p\)-adic noncommutative torus will be used in subsequent work on operator theory in the nonarchimedean setting.

\section{Preliminaries}\label{section:prelim}

\subsection{Relative homological algebra}\label{subsection:relative-homological-algebra}

Throughout this paper, we do homological algebra relative to extensions that split in a base symmetric monoidal category. Concretely, let \((\mathsf{C},\otimes)\) be an additive symmetric monoidal category with kernels and cokernels, and suppose further that the cokernels commute with tensor products. In our applications, \(\mathsf{C}\) is also \textit{closed} in the sense that it has an internal \(\Hom\)-functor that is right adjoint to the tensor product functor; in this case the compatibility of \(\Hom\) and \(\otimes\) is automatic. 

Let \(A\) be a unital algebra object in \(\mathsf{C}\) and \(\mathsf{Mod}_A\) the category of unital left \(A\)-modules. Since \(\mathsf{C}\) has kernels and cokernels, so does \(\mathsf{Mod}_A\), and we call a diagram of kernel-cokernel pairs an \textit{extension}. For our applications, want to restrict to those extensions in \(\mathsf{Mod}_A\) that split in the underlying category \(\mathsf{C}\). These are called \textit{semi-split} extensions, and the following is easy to see: 

\begin{lemma}\label{lem:semi-split-exact}
The category \(\mathsf{Mod}_A\) with semi-split extensions as admissible extensions is a Quillen exact category. 
\end{lemma}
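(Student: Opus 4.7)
The plan is to verify the standard Quillen exact category axioms: identities are admissible monomorphisms and admissible epimorphisms; admissible monomorphisms (resp.\ admissible epimorphisms) are stable under composition; and pushouts of admissible monomorphisms along arbitrary morphisms exist in $\mathsf{Mod}_A$ and remain admissible monomorphisms (dually, pullbacks of admissible epimorphisms along arbitrary morphisms exist and remain admissible epimorphisms). As a preliminary, $\mathsf{Mod}_A$ is additive because $\mathsf{C}$ is, and the hypothesis that cokernels commute with $\otimes$ ensures that the $A$-action descends to kernels and cokernels formed in $\mathsf{C}$, so $\mathsf{Mod}_A$ inherits both.

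The first group of axioms is essentially immediate. An identity extension $M \xrightarrow{\mathrm{id}} M \to 0$ admits the trivial splitting in $\mathsf{C}$. If $i_1 \colon M_1 \to M_2$ and $i_2 \colon M_2 \to M_3$ are admissible monomorphisms with $\mathsf{C}$-linear retractions $r_1$ and $r_2$, then $r_1 \circ r_2$ retracts $i_2 \circ i_1$, and the corresponding cokernel sequence is an iterated extension of semi-split ones, hence semi-split. The argument for compositions of admissible epimorphisms is dual.

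The substantive step is the pushout axiom. Given an admissible monomorphism $i \colon M' \to M$ with $\mathsf{C}$-linear retraction $r \colon M \to M'$ and an arbitrary $A$-module map $f \colon M' \to N$, define the pushout $P$ as the cokernel in $\mathsf{Mod}_A$ of the map $(i,-f) \colon M' \to M \oplus N$; this makes sense because kernels and cokernels of $A$-modules are computed in $\mathsf{C}$. The resulting map $j \colon N \to P$ admits a $\mathsf{C}$-linear retraction: the morphism $M \oplus N \to N$, $(m,n) \mapsto n + f(r(m))$, vanishes on the image of $(i,-f)$ and hence descends to $\tilde{r} \colon P \to N$ with $\tilde{r} \circ j = \mathrm{id}_N$. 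Moreover, the cokernel of $j$ identifies canonically with the cokernel of $i$, so the whole extension $0 \to N \to P \to M/M' \to 0$ is semi-split. Pullbacks of admissible epimorphisms are treated by the dual construction, using a section of the epimorphism in $\mathsf{C}$ to split the pullback.

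The main obstacle is purely bookkeeping: one must check that the constructions performed in $\mathsf{C}$ actually lift to $\mathsf{Mod}_A$, which is precisely what the compatibility assumption that cokernels commute with $\otimes$ provides. Once this is in place, each axiom reduces to a straightforward diagram chase in $\mathsf{C}$.
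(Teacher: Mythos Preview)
Your proof is correct. The paper does not actually prove this lemma---it is stated without proof, introduced only by ``the following is easy to see''---so your verification of the Quillen axioms supplies details the paper omits entirely; the argument you give (transporting the $\mathsf{C}$-linear splitting through the pushout via $\tilde r(m,n) = n + f(r(m))$, and dually for pullbacks) is the standard one.
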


The structure of an exact category is sufficient for the basic operations of homological algebra, such as taking projective resolutions and derived categories. We now recall these notions in the generality we require. A chain complex \((X,d)\) in an exact category \(\mathsf{C}\) is called \textit{exact} if the canonical diagram \[\ker(d) \into X \onto \ker(d)\] is degree-wise an admissible extension in \(\mathsf{C}\). A chain map \(f \colon X \to Y\) is called a \textit{quasi-isomorphism} if its mapping cone is exact. The derived category \(\mathsf{D}(\mathsf{C})\) of an exact category \(\mathsf{C}\) is the Verdier localisation of the homotopy category of chain complexes \(\mathsf{HoKom}(\mathsf{C})\) at the quasi-isomorphisms. In the context of our exact category \(\mathsf{Mod}_A\) with semi-split extensions, we have the following:

\begin{lemma}\label{lem:contractible-exact}
A chain complex is semi-split exact if and only if it is contractible in \(\mathsf{Kom}(\mathsf{C})\).
\end{lemma}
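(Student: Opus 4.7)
The plan is to translate between a contracting homotopy $h$ and the semi-split structure on each kernel-cokernel pair. Concretely, for each $n$ one has the canonical diagram $Z_n \xrightarrow{\iota_n} X_n \xrightarrow{\bar{d}_n} Z_{n-1}$ in $\mathsf{C}$, where $Z_n \defeq \ker(d_n)$ and $\bar{d}_n$ is the factorisation of $d_n$ through the monic $\iota_{n-1}$. The two conditions in the lemma correspond to two equivalent ways of packaging splittings of these diagrams.

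For the ``only if'' direction, semi-split exactness of $X$ supplies $\mathsf{C}$-linear retractions $\sigma_n \colon X_n \to Z_n$ of $\iota_n$ and sections $\tau_n \colon Z_{n-1} \to X_n$ of $\bar{d}_n$ that satisfy the biproduct identity $\iota_n \sigma_n + \tau_n \bar{d}_n = \id_{X_n}$. I would define $h_n \defeq \tau_{n+1} \sigma_n \colon X_n \to X_{n+1}$ and check, using $d_{n+1} = \iota_n \bar{d}_{n+1}$ together with the splitting identities, that $d_{n+1} h_n + h_{n-1} d_n = \iota_n \sigma_n + \tau_n \bar{d}_n = \id_{X_n}$.

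For the converse, starting from $dh + hd = \id$ in $\mathsf{C}$, I would recover the splittings directly from $h$. Since $d^2 = 0$, the morphism $d_{n+1} h_n$ factors through $\iota_n$, yielding a canonical $\sigma_n \colon X_n \to Z_n$; symmetrically, I set $\tau_n \defeq h_{n-1} \iota_{n-1}$. The three required identities $\sigma_n \iota_n = \id_{Z_n}$, $\bar{d}_n \tau_n = \id_{Z_{n-1}}$, and $\iota_n \sigma_n + \tau_n \bar{d}_n = \id_{X_n}$ then follow by pre- or post-composing the contracting homotopy identity with the appropriate $\iota$ (using $d \iota = 0$) and cancelling the resulting monics.

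The only slightly subtle step, which I expect to be the main obstacle, is promoting these three identities into an honest admissible extension in $\mathsf{Mod}_A$, i.e.\ knowing that $\bar{d}_n$ is genuinely a cokernel of $\iota_n$. I would handle this by invoking the standard additive-category fact that morphisms satisfying the biproduct identities exhibit $X_n$ as the direct sum $Z_n \oplus Z_{n-1}$ in $\mathsf{C}$, whence $\bar{d}_n$ is the cokernel of $\iota_n$ in $\mathsf{C}$; since the forgetful functor $\mathsf{Mod}_A \to \mathsf{C}$ reflects cokernels, the same identification holds in $\mathsf{Mod}_A$, and the complex is semi-split exact. Everything else is a routine manipulation of the splitting data.
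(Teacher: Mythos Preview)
Your argument is correct. The paper states this lemma without proof, treating it as a standard fact about split-exact structures, so there is nothing to compare against; your write-up supplies exactly the expected argument.

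One minor remark on the point you flagged as ``slightly subtle'': you do not actually need to invoke reflection of cokernels by the forgetful functor. In the ``if'' direction the maps \(\iota_n\) and \(\bar d_n\) are already morphisms in \(\mathsf{Mod}_A\) (they are the kernel inclusion and the factorisation of the \(A\)-linear differential \(d_n\) through the \(A\)-linear kernel \(\iota_{n-1}\)); the issue is only whether \(\bar d_n\) is a cokernel. Since the paper assumes cokernels in \(\mathsf{C}\) commute with \(\otimes\), the forgetful functor \(\mathsf{Mod}_A\to\mathsf{C}\) \emph{creates} cokernels, so the cokernel of \(\iota_n\) in \(\mathsf{Mod}_A\) is computed in \(\mathsf{C}\). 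Your biproduct identity \(\iota_n\sigma_n+\tau_n\bar d_n=\id_{X_n}\) together with \(\sigma_n\iota_n=\id\) and \(\bar d_n\tau_n=\id\) exhibits \(\bar d_n\) as a cokernel of \(\iota_n\) in \(\mathsf{C}\), hence also in \(\mathsf{Mod}_A\); the \(\mathsf{C}\)-linear section \(\tau_n\) then witnesses the semi-split condition. So the step is even more routine than you suggest.
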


A \textit{resolution} of \(M \in \mathsf{Mod}_A\) is a chain complex \((P_n,d_n)\) with an \(A\)-module homomorphism \(d_0 \colon P_0 \to M\) such that the complex \(P_\bullet \overset{d_0}\to M \to 0\) is semi-split exact. An object \(P\) is called \textit{projective} if \(\mathsf{Hom}_A(P,-) \colon \mathsf{Mod}_A \to \mathsf{Ab}\) maps a semi-split extension to a short exact sequence of abelian groups. An object in \(\mathsf{Mod}_A\) is said to be \textit{free} if it is of the form \(A \otimes X\) for some \(X \in \mathsf{C}\) with \(A\)-module structure given by \(m_A \otimes \mathsf{id}_A \colon A \otimes A \otimes X \to A \otimes X\), where \(m_A\) is the multiplication of \(A\). Dually, we call an object \(I\) \textit{injective} if \(\mathsf{Hom}_A(-,I)\) preserves semi-split exact sequences. The dual notion of a free module is a \textit{cofree}-module; we say that an \(A\)-module is cofree if it is of the form \(\mathsf{Hom}(A,M)\) for some \(M \in \mathsf{C}\). Finally, a \textit{projective} (respectively, \textit{free}) \textit{resolution} of \(M\) is a resolution \(P_\bullet \to M\), where each \(P_n\) is projective (respectively, free).

\begin{lemma}\label{lem:enough-projectives}
The exact category \(\mathsf{Mod}_A\) has enough projectives and enough injectives. Furthermore, every chain complex \(X \in \mathsf{HoKom}(\mathsf{Mod}_A)\) has a projective and an injective resolution. 
\end{lemma}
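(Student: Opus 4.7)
The plan is to first exhibit enough projectives and enough injectives in $\mathsf{Mod}_A$ through the free and cofree module constructions, and then to bootstrap to resolutions of chain complexes by an inductive Cartan-Eilenberg-style argument. The key observation I would use is that the forgetful functor $U \colon \mathsf{Mod}_A \to \mathsf{C}$ has a left adjoint $F \colon X \mapsto A \otimes X$ and, since $\mathsf{C}$ is closed, a right adjoint $G \colon X \mapsto \mathsf{Hom}(A,X)$; these produce the free and cofree modules, respectively.

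For projectives, I would apply the adjunction isomorphism $\mathsf{Hom}_A(A \otimes X,-) \cong \mathsf{Hom}_{\mathsf{C}}(X, U(-))$ to conclude that $A \otimes X$ is projective for every $X \in \mathsf{C}$: by definition $U$ sends semi-split extensions in $\mathsf{Mod}_A$ to split short exact sequences in $\mathsf{C}$, and $\mathsf{Hom}_{\mathsf{C}}(X,-)$ is additive and so preserves split exact sequences. Enough projectives then follows because for any $M \in \mathsf{Mod}_A$ the multiplication map $A \otimes M \to M$ admits the $\mathsf{C}$-linear section $u \otimes \id_M \colon M \cong k \otimes M \to A \otimes M$, where $u \colon k \to A$ is the unit of $A$; this makes it a semi-split admissible epic from a projective. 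The injective case is strictly dual: $\mathsf{Hom}(A,X)$ is injective by the right-hand adjunction, and the canonical map $M \to \mathsf{Hom}(A,M)$, $m \mapsto (a \mapsto a \cdot m)$, is a semi-split admissible monic whose $\mathsf{C}$-linear retract is evaluation at $u$.

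For resolutions of an arbitrary complex $X \in \mathsf{HoKom}(\mathsf{Mod}_A)$, I would iterate the preceding one-step constructions in a horseshoe-style procedure. When $X$ is bounded above, the argument is entirely routine: resolve each $X_n$ separately and splice, using that the admissible kernels at each stage remain admissible since their $\mathsf{C}$-linear sections realise them as direct summands in $\mathsf{C}$. The main obstacle will be the unbounded case, where assembling column-wise resolutions into a bicomplex and totalising does not automatically yield a semi-split quasi-isomorphism. I would address this by a Spaltenstein-type argument, resolving the truncations $\tau_{\geq -n} X$ coherently and passing to a compatible homotopy limit (with product totalisations on the projective side and coproduct totalisations on the injective side); alternatively, one can extract the statement directly from the model-categorical framework for exact categories developed in \cite{Meyer:HLHA}, which is the reference already at the heart of this subsection.
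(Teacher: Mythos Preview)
Your proposal is correct and follows essentially the same route as the paper: free modules \(A\otimes X\) are projective via the free--forgetful adjunction, the multiplication map with section \(u\otimes\id\) gives enough projectives, cofree modules \(\Hom(A,X)\) handle the injective side dually, and the passage to complexes is declared standard. If anything, you are more careful than the paper, which asserts projectivity of \(A\otimes X\) without invoking the adjunction and dismisses the complex case in one sentence; your discussion of the unbounded case via Spaltenstein or the model structure of \cite{Meyer:HLHA} spells out what ``standard'' means here.
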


\begin{proof}
For any object \(X \in \Mod_A\), the free \(A\)-module \(A \otimes X\) is projective, and the module structure map \(A \otimes X \to X\) is an admissible epimorphism, with section \(u \otimes 1_X \colon X \to A \otimes X\), where \(u \colon 1 \to A\) is the unit of \(A\). This shows that \(\Mod_A\) has enough projectives. Now since \(\fC\) is closed, for all \(X \in \fC\) we have  \(\Hom_{A}(Y, \Hom_A(A,X)) \cong \Hom(Y,X)\) for all \(Y \in \Mod_A\). Specialising to \(Y = X\), we get a map \(X \to \Hom_A(A,X)\); this splits by the map \(\Hom_A(A,X) \to X\) we get by specialising the tensor-Hom adjunction to \(Y = \Hom_A(A,X)\). With the existence of enough projectives and injectives established, the proof that a chain complex in the homotopy category of chain complexes of \(A\)-modules has a projective and an injective resolution is standard. 
\end{proof}

Let \(A^\op\) denote the algebra that is \(A\) as an object of \(\fC\), but with the opposite multiplication, that is, the multiplication of \(A\) composed with the flip map \(A \otimes A \to A \otimes A\) that exchanges the tensor factors.  Then Lemma \ref{lem:enough-projectives} in particular implies that the mapping space bifunctor \[\mathsf{Hom}_A \colon \mathsf{Mod}_{A \otimes B^\op}^\op \times \mathsf{Mod}_{A \otimes C^{\op}} \to \mathsf{Mod}_{B \otimes C^\op}, (M,N) \mapsto \mathsf{Hom}_A(M,N),\] and the tensor product bifunctor \[- \otimes_A - \colon \mathsf{Mod}_{A \otimes B^\op} \times \mathsf{Mod}_{A \otimes C^\op} \to \mathsf{Mod}_{B \otimes C^\op}, \quad (M,N) \mapsto M \otimes_A N,\] have derived functors, which we denote by \(\mathbb{R}\Hom_A\) and \(- \otimes_A^{\mathbb{L}} - \). Let \[\HOM_A \colon \mathsf{HoKom}(\mathsf{Mod}_{A \otimes B^\op})^\op \times \mathsf{HoKom}(\mathsf{Mod}_{A \otimes C^\op}) \to \mathsf{HoKom}(\mathsf{Mod}_{B \otimes C^\op})\] denote the extension of \(\Hom_A\) to the homotopy category of chain complexes. This is the usual mapping complexes in the category of chain complexes. The derived mapping space is explicitly defined as \[\mathbb{R}\Hom_A(X,Y) = \HOM_A(P(X),Y) \cong \HOM_A(X,I(Y))\cong \HOM_A(P(X),I(Y)),\] where \(X \in \mathsf{HoKom}(\Mod_{A \otimes B^\op})^\op\) and \(Y \in \mathsf{HoKom}(\Mod_{A \otimes C^\op})\), and \(P(X)\) and \(I(Y)\) are projective and injective resolutions of \(X\) and \(Y\), respectively. Likewise, the derived tensor product is defined as \[X \otimes_A^{\mathbb{L}} Y \cong P(X) \otimes_A Y \cong X \otimes_A P(Y) \cong P(X) \otimes_A P(Y),\] for complexes \(X \in \mathsf{HoKom}(\mathsf{Mod}_{A \otimes B^\op})\) and \(Y \in \mathsf{HoKom}(\mathsf{Mod}_{A \otimes C^\op})\).   

Now consider a unital algebra homomorphism \(f \colon A \to B\) in \(\fC\). We can then associate to \(f\) four functors as follows. First, we have the extension of scalar functor \(f^* \colon \mathsf{Mod}_B \to \mathsf{Mod}_A\). This functor has a left adjoint given by base change \(f_{!} \colon \mathsf{Mod}_A \to \mathsf{Mod}_B\), \(M \mapsto B \otimes_A M\). Since \(\mathsf{C}\) is closed, this functor also has a right adjoint \(f_{*} \colon \mathsf{Mod}_A \to \mathsf{Mod}_B\), \(M \mapsto \mathsf{Hom}_A(B,M)\). Finally, we have the functor \(f^{!} \colon \mathsf{Mod}_B \to \mathsf{Mod}_A\), \(M \mapsto \mathsf{Hom}_B(B,M)\), which is left adjoint to \(f_{!}\). Notice that the functors \(f^*\) and \(f^{!}\) coincide. 

The functors above are all additive, so they descend to functors between the homotopy categories of chain complexes. Furthermore, the functors \(f^*\) and \(f^{!}\) are exact, using that \(B\) is projective as a left and right \(B\)-module. Consequently, they descend to functors \(f^* \simeq f^{!} \colon \mathsf{D}(B) \to \mathsf{D}(A)\) on derived categories of the exact categories \(\mathsf{Mod}_B\) and \(\mathsf{Mod}_A\).  Their adjoints \(f_{!}\) and \(f_{-}\) need not however be exact, so we need to take their Kan extensions, defined as follows:

\[
\mathbb{L}f_{!}(M) \defeq B \otimes_{A}^{\mathbb{L}} M, \quad \mathbb{R}f_{*}(M) \defeq \mathbb{R}\mathsf{Hom}_A(B,M).
\] These define functors \(\mathbb{L}f_{!}, \mathbb{R}f_{*} \colon \mathsf{D}(A) \to \mathsf{D}(B)\) at the level of derived categories. The relationship between these four functors on derived categories is summarised in the following proposition:

\begin{proposition}\label{prop:six-functor}
 Let \(f \colon A \to B\) be a unital algebra homomorphism between algebras in a symmetric monoidal category \(\fC\). Then the functor \(\mathbb{R}f_{*}\) is right adjoint to \(f^*\), and \(\mathbb{L}f_{!}\) is left adjoint to \(f^{!}\).   
\end{proposition}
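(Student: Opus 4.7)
The plan is to derive both adjunctions by lifting the underived adjunctions $f_! \dashv f^*$ and $f^* \dashv f_*$ (equivalently $f^! \dashv f_*$, using the natural identification $f^! \simeq f^*$ noted in the text) from the module categories to their derived categories. Two structural facts make this lift go through almost formally: the functor $f^*$ is exact, so it descends to $\mathsf{D}(A)$ without any modification; and the adjoints $f_!$ and $f_*$ preserve projectives and injectives respectively, so their left/right derived functors may be computed on the resolutions furnished by Lemma \ref{lem:enough-projectives} without any further comparison.

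For the adjunction $\mathbb{L}f_! \dashv f^!$, I take a projective resolution $P \to M$ in $\Mod_A$. By the construction in Lemma \ref{lem:enough-projectives}, every projective is a retract of a free module $A \otimes X$, and $f_!(A \otimes X) = B \otimes X$ is again free; hence $B \otimes_A P$ is a complex of projective $B$-modules computing $\mathbb{L}f_!(M)$. Applying $\HOM_B(-, N)$ and invoking the tensor-Hom adjunction of $\mathsf{C}$ promoted to $A$-linearity yields a natural isomorphism
\[
\HOM_B(B \otimes_A P, N) \;\cong\; \HOM_A(P, f^!(N)),
\]
and taking $H^0$ recovers $\Hom_{\mathsf{D}(B)}(\mathbb{L}f_!(M), N) \cong \Hom_{\mathsf{D}(A)}(M, f^!(N))$.

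For $f^* \dashv \mathbb{R}f_*$, I dualise: pick an injective resolution $N \to I$ in $\Mod_A$ and verify that $f_*(I) = \Hom_A(B, I)$ is a complex of injective $B$-modules. This is where exactness of $f^*$ enters, through the identity
\[
\Hom_B(Y, f_*(I)) \;\cong\; \Hom_A(f^*(Y), I),
\]
whose right-hand side is exact in $Y$ because $f^*$ is exact and $I$ is injective. Consequently $\mathbb{R}f_*(N)$ is represented by $f_*(I)$, and applying $\HOM_B(M, -)$ together with the same $A$-linear tensor-Hom identity gives $\Hom_{\mathsf{D}(B)}(M, \mathbb{R}f_*(N)) \cong \Hom_{\mathsf{D}(A)}(f^*(M), N)$.

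The main obstacle, or really the one point requiring genuine care rather than formal manipulation, is verifying that the tensor-Hom adjunction of the underlying monoidal category $\mathsf{C}$ refines to the $A$-linear statement $\HOM_B(B \otimes_A P, N) \cong \HOM_A(P, f^!(N))$ used in both steps. This is monoidal bookkeeping based on tracking how the various module actions are transported through $f$; no new ideas are required, but the actions on both sides must be matched carefully. Once this is granted, preservation of resolutions, passage to $H^0$, and naturality in both variables are all formal.
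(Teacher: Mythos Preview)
Your proof is correct and follows essentially the same route as the paper: both arguments use that $f_!$ preserves projectives and $f_*$ preserves injectives to compute the derived functors on resolutions, then invoke the underived adjunction at the chain-complex level. You spell out more of the justification (why $f_!(A\otimes X)$ is free, why $f_*(I)$ is injective via exactness of $f^*$), whereas the paper simply asserts that $f_*$ preserves injectives and runs the chain of isomorphisms at the level of $\mathbb{R}\Hom$; but the skeleton is identical.
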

\begin{proof}
It is easy to see that the adjunction between the pairs \((f_*, f^*)\) and \((f^{!}, f_{!})\) extend to adjunctions between the homotopy categories of chain complexes. Now let \(P_\bullet \to X \to I_\bullet\) be a projective and an injective resolution of \(X\) an \(A\)-module. Since \(f_*\) preserves injectives, we have \begin{multline*}
\mathbb{R}\mathsf{Hom}_B(N, \mathbb{R}f_*(M)) \cong \mathbb{R}\mathsf{Hom}_B(N, f_*(I(M))) \\
\cong \mathsf{Hom}_B(N,f_*(I(M))) \cong \mathsf{Hom}_A(f^*N, I(M)) \cong \mathbb{R}\mathsf{Hom}_A(f^*N, M),
\end{multline*} which yields the first adjunction. The proof for the second adjunction is similar and uses that \(f_{!}\) preserves projectives.  
\end{proof}

Together with the tensor-Hom adjunction, the functors in Proposition \ref{prop:six-functor} yield a version of the \(6\)-functor formalism in noncommutative geometry. 

The functor \(f^*\) induces a map between derived mapping spaces given explicitly by 
\begin{multline*}
\mathbb{R}\Hom_A(X,Y) \cong \HOM_A(P(X),P(Y)) \to \\
\HOM_B(f^*(P(X)),f^*(P(Y))) \cong \mathbb{R}\Hom_A(f^*X,f^*Y).
\end{multline*} Similarly, we can define maps 
\[
f^{!} \colon \mathbb{R}\mathsf{Hom}_B(M,N)  \to \mathbb{R}\mathsf{Hom}_A(f^{!}(M), f^{!}(N)),
\] \[f_{*} \colon \mathbb{R}\mathsf{Hom}_A(M,N) \to \mathbb{R}\mathsf{Hom}_B(\mathbb{R}f_*(M), \mathbb{R}f_*(N)),\] and \(f_{!} \colon \mathbb{R}\mathsf{Hom}_A(M,N) \to \mathbb{R}\mathsf{Hom}_B(\mathbb{L}f_{!}(M), \mathbb{L}f_{!}(N))\).  We also have a natural map \[f_* \colon f^*(X) \otimes_A^{\mathbb{L}} f^*(Y) \to X \otimes_A^{\mathbb{L}} Y,\] defined by the composition \[f^*(X) \otimes_A^{\mathbb{L}} f^*(Y) \cong P(f^*(X)) \otimes_A P(f^*(Y)) \to P(X) \otimes_A P(Y) \cong X \otimes_A^{\mathbb{L}} Y,\] where \(P\) is the projective resolution functor.

\subsection{Isocohomological embeddings}\label{subsection:isocohomological}

We now recall the notion of an isocohomological embedding, which have gone by different names with minor variations in context. Let \(\mathsf{C}\) be a symmetric monoidal category  with the additional properties of the previous subsection, and let \(f \colon A \to B\) be a unital algebra homomorphism. By the discussion in the previous section, the extension of scalars functor \(f^* \colon \mathsf{Mod}_B \to \mathsf{Mod}_A\) descends to a functor \(\mathsf{D}(B) \to \mathsf{D}(A)\) between derived categories.

\begin{definition}\label{def:isocohomological}
We call an algebra homomorphism \(f \colon A \to B\) between unital algebras \textit{isocohomological} if the induced functor \(\mathsf{D}(B) \to \mathsf{D}(A)\) is fully faithful. 
\end{definition}

Based on Definition \ref{def:isocohomological} alone, it is hard to ascertain when an algebra homomorphism \(f \colon A \to B\) is isocohomological. The following proposition provides several criteria, which can often be checked in practice.

\begin{theorem}\label{thm:isocohomological-equivalent}

Let \(f \colon A \to B\) be a unital algebra homomorphism. The following statements are equivalent:

\begin{enumerate}
\item\label{isoco:1} The map \(f \colon A \to B\) is an isocohomological embedding;
\item\label{isoco:2} For any projective \(A\)-bimodule resolution \(P_\bullet \to A\), the induced chain complex \(B \otimes_A P_\bullet \otimes_A B\) is a projective \(B\)-bimodule resolution of \(B\);
\item\label{isoco:3} The map \(f_* \colon f^*(B) \otimes_A^{\mathbb{L}} f^*(B) \to B \otimes_B^{\mathbb{L}} B \cong B\) is a chain homotopy equivalence, that is, an isomorphism in \(\mathsf{HoKom}(\fC)\); 
\item\label{isoco:4} The map \(f_* \colon f^*(M) \otimes_A^{\mathbb{L}} f^*(N) \to M \otimes_B^{\mathbb{L}} N\) is a chain homotopy equivalence in \(\mathsf{HoKom}(\fC)\) for all \(M \in \mathsf{D}(B^\op)\) and \(N \in \mathsf{D}(B)\);
\item\label{isoco:5} The map \(f^* \colon \mathbb{R}\mathsf{Hom}_B(B,M) \to \mathbb{R}\mathsf{Hom}_A(f^*(B),f^*(M))\) is an isomorphism in \(\mathsf{HoKom}(\fC)\);
\item\label{isoco:6} The map \(f^* \colon \mathbb{R}\Hom_B(M,N) \to \mathbb{R}\Hom_A(f^*(M), f^*(N))\) is an isomorphism in \(\mathsf{HoKom}(\fC)\) for all \(M\), \(N \in \mathsf{D}(B)\);
\item\label{isoco:7} The map \(f_* \colon B \hot_A^{\mathbb{L}} B \to B\) is a chain homotopy equivalence in \(\mathsf{HoKom}(\fC)\). 
\end{enumerate}
\end{theorem}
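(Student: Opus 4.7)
The plan is to show that each of conditions (2)--(7) is equivalent to (1), the fully faithfulness of $f^*\colon \mathsf{D}(B) \to \mathsf{D}(A)$. The organising observation is Proposition \ref{prop:six-functor}: the functor $f^*$ sits in two adjunctions, as right adjoint to $\mathbb{L}f_!$ and as left adjoint to $\mathbb{R}f_*$. By general category theory, an adjunction $L \dashv R$ has $L$ (resp.\ $R$) fully faithful if and only if the unit (resp.\ counit) is an isomorphism. Applied here, (1) is equivalent to the counit $\mathbb{L}f_!\, f^* \Rightarrow \id_{\mathsf{D}(B)}$ being an isomorphism, and also to the unit $\id_{\mathsf{D}(B)} \Rightarrow \mathbb{R}f_*\, f^*$ being an isomorphism.

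For the Hom-side, I would first observe that the map in (6), by tensor-Hom adjunction at the derived level, factors as
\[
\mathbb{R}\Hom_B(M,N) \to \mathbb{R}\Hom_B(M,\mathbb{R}f_* f^* N) \simeq \mathbb{R}\Hom_A(f^* M, f^* N),
\]
where the first arrow is postcomposition with the unit at $N$. Hence (6) for all $M,N$ is equivalent to the unit being an isomorphism at every $N$, that is, to (1). For (5), note that $\mathbb{R}\Hom_B(B,M)\simeq M$ and $\mathbb{R}\Hom_A(B, f^* M)\simeq \mathbb{R}f_* f^* M$, so (5) directly asserts that the unit is an isomorphism at every $M\in\mathsf{D}(B)$, giving (5) $\Leftrightarrow$ (1) $\Leftrightarrow$ (6).

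For the tensor side, conditions (3) and (7) are identical, and both express that the counit $B\otimes_A^{\mathbb{L}} B \to B$ at the object $B$ is an isomorphism. To promote this to (1), I would use the associativity of the derived tensor product to write
\[
f^* M \otimes_A^{\mathbb{L}} f^* N \simeq M \otimes_B^{\mathbb{L}} \bigl(B \otimes_A^{\mathbb{L}} B\bigr) \otimes_B^{\mathbb{L}} N,
\]
so that (3) forces the right-hand side to collapse to $M\otimes_B^{\mathbb{L}} N$; this yields (3) $\Rightarrow$ (4). The converse (4) $\Rightarrow$ (3) is the specialisation $M=N=B$. Moreover, taking $M=B$ in the same identity shows that (3) propagates the counit $B\otimes_A^{\mathbb{L}} N \to N$ to an isomorphism at every $N\in\mathsf{D}(B)$, hence (3) $\Rightarrow$ (1); the converse is obvious.

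Finally, for (2) $\Leftrightarrow$ (3), given a projective $A$-bimodule resolution $P_\bullet \to A$, the complex $B\otimes_A P_\bullet \otimes_A B$ is a complex of $B$-bimodules computing $B\otimes_A^{\mathbb{L}} B$; each term is projective as a $B$-bimodule because base change along $f\otimes f\colon A\otimes A^{\op}\to B\otimes B^{\op}$ sends free bimodules to free bimodules and preserves the relevant sections. By Lemma \ref{lem:contractible-exact}, the augmentation to $B$ is semi-split exact if and only if it is a chain homotopy equivalence in $\mathsf{HoKom}(\fC)$, which is exactly (3). The main conceptual obstacle is the step (3) $\Rightarrow$ (4)/(1): the equivalence $B\otimes_A^{\mathbb{L}} B \simeq B$ is an assertion at a single object, and the associativity manipulation is the mechanism by which it propagates across the whole derived category; once this is in hand, all the remaining implications are formal consequences of the adjunctions and of tensor-Hom.
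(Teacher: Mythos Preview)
Your argument is correct and is essentially the approach the paper defers to (Meyer's Theorem~35); the paper itself only writes out the equivalence \ref{isoco:2}\,$\Leftrightarrow$\,\ref{isoco:3}, while you supply the full chain of implications via the adjunctions of Proposition~\ref{prop:six-functor} and associativity of the derived tensor product. The one place where the paper is more explicit than you is the claim that $B\otimes_A P_\bullet\otimes_A B$ computes $B\otimes_A^{\mathbb{L}}B$: since projective $A$-bimodules are projective as one-sided $A$-modules, the augmented complex $P_\bullet\to A$ is already contractible as a complex of right $A$-modules, so $P_\bullet\otimes_A B\to B$ remains a left-$A$-projective resolution of $B$ and one may then apply $B\otimes_A(-)$.
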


\begin{proof}
The proof of \cite{meyer2004embeddings}*{Theorem 35} extends to arbitrary closed symmetric monoidal categories; our case being simpler in that we only consider unital algebras and homomorphisms. For clarity, we prove the equivalence between \ref{isoco:2} and \ref{isoco:3}, which are the only characterisations we use in what follows. Let \(P_\bullet \to A\) be a projective \(A\)-bimodule resolution of \(A\). This means that the complex \(P_\bullet \to A \to 0\) is contractible in \(\mathsf{HoKom}(\fC)\), and  that the \(P_\bullet\) are projective as \(A\)-bimodules. Since \(A\) is projective as a right \(A\)-module, we have that \(P_\bullet \to A\) is an isomorphism in \(\mathsf{HoKom}(A^\op)\). This is because projectivity as \(A\)-modules guarantees that the \(\dvf\)-linear sections of the extension \(P_\bullet \to A \to 0\) extend to \(A\)-linear sections, which we denote by \(s\). Tensoring by \(B\) on the right, we get a chain map \(P_\bullet \hot_A B \to A \hot_A B \cong B\) of projective \(A\)-\(B\)-bimodules. This is a resolution because \(s \otimes \mathrm{1}_B\) is a contracting homotopy for the complex \(P_\bullet \hot_A B \to B \to 0\). Consequently, we have \(B \hot_A^{\mathbb{L}} B \cong B \hot_A (P_\bullet \hot_A B)\), which is a (projective) resolution if and only if \(B \hot_A^{\mathbb{L}} B \to B\) is an isomorphism in \(\mathsf{HoKom}(\fC)\). 
\end{proof}

In other words, Theorem \ref{thm:isocohomological-equivalent} says that if an algebra homomorphism \(f \colon A \to B\) is an isocohomological embedding, one can simply read-off of statements \ref{isoco:4} and \ref{isoco:6} to conclude the  weaker statements about homotopy groups \[\Tor_n^{B}(M,N) \cong \Tor_n^A(M,N) \qquad \Ext_B^n(M,N) \cong \Ext_A^n(M,N)\] for all \(n \in \N\). Likewise, one can deduce information about cohomological dimension, which has interesting consequences in cyclic homology:

\begin{corollary}\label{cor:isocohomo-finite}
Let \(f \colon A \to B\) be an isocohomological embedding. Suppose \(A\) has a projective \(A\)-bimodule resolution of length \(k\), then so does \(B\). 
\end{corollary}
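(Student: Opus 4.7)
The plan is to invoke the characterization of isocohomological embeddings given by condition \ref{isoco:2} of Theorem \ref{thm:isocohomological-equivalent}. By hypothesis there exists a projective $A$-bimodule resolution $P_\bullet \to A$ with $P_i = 0$ for $i > k$. I would apply the base-change functor $B \otimes_A (-) \otimes_A B$ termwise to obtain a complex $B \otimes_A P_\bullet \otimes_A B$ of $B$-bimodules, which is trivially still concentrated in degrees $\leq k$ since the functor is additive.

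Two points then need to be verified. First, each term $B \otimes_A P_i \otimes_A B$ is projective as a $B$-bimodule: a free $A$-bimodule $A \otimes X \otimes A$ with $X \in \fC$ is sent to $B \otimes X \otimes B$, which is free as a $B$-bimodule, and since projectivity passes to retracts, an arbitrary projective $A$-bimodule is sent to a projective $B$-bimodule. Second, the augmented complex $B \otimes_A P_\bullet \otimes_A B \to B \to 0$ is exact in the semi-split sense; but this is precisely condition \ref{isoco:2} of Theorem \ref{thm:isocohomological-equivalent}, which holds because $f$ is isocohomological. Combining these two observations, $B \otimes_A P_\bullet \otimes_A B \to B$ is a projective $B$-bimodule resolution of $B$ of length at most $k$.

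There is no real obstacle beyond invoking the theorem: once the equivalence of conditions \ref{isoco:1} and \ref{isoco:2} is in hand, the corollary is a bookkeeping observation that the truncation degree is preserved under the additive functor $B \otimes_A (-) \otimes_A B$, and that this functor sends projective $A$-bimodules to projective $B$-bimodules.
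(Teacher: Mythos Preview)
Your proof is correct and follows the same approach as the paper: both arguments amount to applying condition \ref{isoco:2} of Theorem \ref{thm:isocohomological-equivalent} to the given length-\(k\) resolution. The paper's one-line proof cites the equivalence \ref{isoco:2}\(\Leftrightarrow\)\ref{isoco:3} (since that is the implication established explicitly in the proof of the theorem), whereas you cite \ref{isoco:1}\(\Leftrightarrow\)\ref{isoco:2}; this is immaterial, and your version is in fact more detailed, since you spell out why \(B \otimes_A (-) \otimes_A B\) preserves projectivity---a point the paper absorbs into the statement of \ref{isoco:2} itself.
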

\begin{proof}
Use the equivalence between statements \ref{isoco:2} and \ref{isoco:3} in Theorem \ref{thm:isocohomological-equivalent}. 
\end{proof}

To see the significance of Corollary \ref{cor:isocohomo-finite} to cyclic homology, we first recall the notion of a \textit{bimodule connection} in noncommutative geometry. For \(n \geq 0\) and a unital algebra \(A\), we define the \(A\)-bimodule of \textit{noncommutative differential \(n\)-forms} by
\[\Omega^n(A) \defeq 
\begin{cases}
A \quad n = 0\\
\ker(A \otimes A \overset{m}\to A) \quad n = 1 \\
 \Omega^1(A)^{\otimes n} \quad n \geq 2,
\end{cases} 
\] where \(m \colon A \otimes A \to A\) is the multiplication of the algebra. Equipped with the Hochschild differential \(b \colon \Omega^n(A) \to \Omega^{n-1}(A)\), the complex \((\Omega^\bullet(A), b)\) computes the Hochschild homology of \(A\). 

Now suppose \(A\) has a projective \(A\)-bimodule resolution of length \(n\). Then by \cite{Meyer:HLHA}*{Proposition A.92}, there is a morphism \(\nabla \colon \Omega^n(A) \to \Omega^{n+1}(A)\) satisfying \[\nabla \circ \mu_{A,\Omega^n} = \mu_{A,\Omega^{m+1}(A)} \circ (\mathsf{id}_A \otimes \nabla) + d \otimes \mathsf{id}_{\Omega^n(A)} \colon A \otimes \Omega^n(A) \to \Omega^{n+1}(A),\] where \(\mu_{A,-}\) is the left \(A\)-module structure and \(d \colon A \to \Omega^1(A)\) is the universal derivation. In terms of elements, this is nothing but the Leibnitz rule \(\nabla(a \cdot \omega) = a\nabla(\omega) + \omega \diff(a)\) for all \(\omega \in \Omega^n(A)\) and \(a \in A\). In order not to cause clutter in notation, we assume that our symmetric monoidal category has enough elements and use element notation in what follows. Notice that \(\Omega^n(A) \cong A \otimes \bar{A}^{\otimes n}\), where \(\bar{A}\) is the cokernel of the unit map to \(A\), implying that \(\Omega^n(A)\) is a free right (and similarly left) \(A\)-module, so that \(\nabla\) is automatically a right \(A\)-module map. We call the map \(\nabla\) an \textit{\(n\)-connection on \(A\)}. We can extend such a connection in a canonical way \[\nabla \colon \Omega^{n+m}(A)  \to  \Omega^{n+m+1}(A), \quad \omega \cdot \diff x_{n+1} \cdots \diff x_{n+m} \mapsto \nabla(\omega) \diff x_{n+1} \cdots \diff x_{n+m},\] where \(\omega \in \Omega^{n}(A)\), and \(m \geq 0\). Furthermore, it can be checked easily that \[[\nabla,b] = \mathrm{id}_{\Omega^{n+m}(A)}\] for \(m \geq 0\), where \(b_n \colon \Omega^n(A) \to \Omega^{n+1}(A)\) is the Hochschild differential. Consequently, we have a chain homotopy equivalence \[\mathsf{HH}(A) \simeq (\Omega^n(A)/[-,-] \overset{b}\to \Omega^{n-1}(A) \overset{b}\to \dotsc \to \Omega^1(A) \overset{b}\to A)\] between the Hochschild complex and its truncation to degrees at most \(n\).   

\begin{theorem}\label{thm:main-abstract}
Suppose \(A \to B\) is an isocohomological embedding and \(A\) has finite projective bidimension, then \[\mathsf{HH}(B) \simeq (\Omega^n(B)/[-,-] \overset{b}\to \Omega^{n-1}(B) \overset{b}\to \dotsc \to \Omega^1(B) \overset{b}\to B)\] for some \(n \geq 0\). 
\end{theorem}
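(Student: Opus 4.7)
The proof should be almost immediate given the machinery assembled just before the statement. The plan is to combine Corollary~\ref{cor:isocohomo-finite} with the general existence-of-connections argument sketched in the paragraph preceding the theorem.

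First, I would invoke Corollary~\ref{cor:isocohomo-finite}: since \(f \colon A \to B\) is an isocohomological embedding and \(A\) admits a projective \(A\)-bimodule resolution of some finite length \(n\), the same corollary produces a projective \(B\)-bimodule resolution of \(B\) of length at most \(n\). Concretely, if \(P_\bullet \to A\) is such a resolution, then \(B \otimes_A P_\bullet \otimes_A B \to B\) is a resolution of \(B\) by projective \(B\)-bimodules, and it has the same length as \(P_\bullet\). So \(B\) inherits finite projective bidimension from \(A\).

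Next, I would apply the construction recalled just above the theorem statement (citing \cite{Meyer:HLHA}*{Proposition A.92}) to \(B\): finite projective bidimension \(n\) guarantees the existence of an \(n\)-connection \(\nabla \colon \Omega^n(B) \to \Omega^{n+1}(B)\), extended canonically to \(\nabla \colon \Omega^{n+m}(B) \to \Omega^{n+m+1}(B)\) for all \(m \geq 0\) by the formula \(\nabla(\omega \cdot \diff x_{n+1} \cdots \diff x_{n+m}) = \nabla(\omega)\, \diff x_{n+1} \cdots \diff x_{n+m}\). The key identity is \([\nabla,b] = \mathsf{id}_{\Omega^{n+m}(B)}\) for all \(m \geq 0\), which is precisely what was established in that paragraph for any algebra of finite projective bidimension.

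Finally, I would use this identity to produce the desired chain homotopy equivalence. The map \(\nabla\) serves as a contracting homotopy on the subcomplex of \(\mathsf{HH}(B)\) living in degrees \(\geq n+1\), so the canonical projection onto the truncation
\[
(\Omega^n(B)/[-,-] \overset{b}\to \Omega^{n-1}(B) \overset{b}\to \dotsc \to \Omega^1(B) \overset{b}\to B)
\]
is a chain homotopy equivalence. The only mild subtlety is ensuring the passage to the commutator quotient in top degree \(n\) is compatible: one must check that the truncated complex with \(\Omega^n(B)/[-,-]\) in the top slot computes the same homotopy type as the full Hochschild complex truncated naively, which is the content of the displayed equivalence recalled just before the statement. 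Since this is precisely the general statement being quoted from the preamble, no further work is needed — the theorem follows by instantiating that general fact at the algebra \(B\), whose finite bidimension was supplied by Corollary~\ref{cor:isocohomo-finite}.
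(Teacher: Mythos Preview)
Your proposal is correct and follows essentially the same approach as the paper's own proof: invoke Corollary~\ref{cor:isocohomo-finite} to transfer finite projective bidimension from \(A\) to \(B\), then use the existence of an \(n\)-connection on \(B\) (via \cite{Meyer:HLHA}*{Proposition A.92}) to contract the Hochschild complex in degrees \(\geq n\). The paper's proof is simply a two-sentence compression of exactly the steps you spell out.
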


\begin{proof}
By Corollary \ref{cor:isocohomo-finite}, \(B\) has finite projective bidimension. This implies the existence of an \(n\)-connection for some \(n\), which contracts the Hochschild complex in degrees \(\geq n\). 
\end{proof}

The contractibility of Hochschild homology in higher degrees also has consequences for periodic cyclic homology computations. For this discussion, let \(\fC\) be additionally \(\Q\)-linear. Let \((\mathsf{HP}(A), b+ B)\) denote the periodic cyclic homology complex of \(A\). The subcomplexes \(\mathcal{F}_n(\mathsf{HP}(A)) \defeq   b(\Omega^{n+1}(A)) \times \prod_{k=n+1}^\infty \Omega^k(A)\) are a decreasing filtration on \(\mathsf{HP}(A)\), called the \textit{Hodge filtration}. Its subquotients are \(\Z/2\Z\)-graded chain complexes \[X^{(n)}(A) \defeq (\mathsf{HP}(A)/\Fil_n(\mathsf{HP}(A)), B+b) = (\prod_{k=0}^{n-1} \Omega^k(A) \times \Omega^n(A)/[-,-], b+ B).\] 

\begin{corollary}\label{cor:HP-finite}
Suppose \(A \to B\) is an isocohomological embedding between unital algebras in a \(\Q\)-linear symmetric monoidal category \(\fC\). Suppose \(A\) has finite projective bidimension. Then \(\mathsf{HP}(B) \simeq X^{(n)}(B)\) for some \(n\). 
\end{corollary}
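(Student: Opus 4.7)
My plan is to upgrade the Hochschild-level contracting homotopy supplied by an $n$-connection on $B$ to a contraction of the Hodge-filtration subcomplex $\mathcal F_n(\mathsf{HP}(B)) \subset \mathsf{HP}(B)$, by treating Connes' differential $B$ as a perturbation of the Hochschild differential $b$.

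First, Corollary~\ref{cor:isocohomo-finite} transfers finite projective bidimension from $A$ to $B$; fix $n$ so that $B$ admits a projective $B$-bimodule resolution of length $\le n$. The construction preceding Theorem~\ref{thm:main-abstract} then yields an $n$-connection $\nabla\colon \Omega^{n+m}(B) \to \Omega^{n+m+1}(B)$ for every $m \ge 0$, satisfying $[\nabla, b] = \mathrm{id}_{\Omega^{n+m}(B)}$. Viewing
\[
\mathcal F_n(\mathsf{HP}(B)) = b(\Omega^{n+1}(B)) \times \prod_{k \ge n+1} \Omega^k(B)
\]
as a $\Z/2$-graded subcomplex of $(\mathsf{HP}(B), b + B)$, we observe that $\nabla$ restricts to a contracting homotopy for the $b$-part of the differential on this subcomplex; only the Connes operator $B$ needs to be absorbed.

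To handle $B$, I set
\[
h \defeq \sum_{k \ge 0} (-\nabla B)^k \nabla.
\]
This is well-defined on $\mathcal F_n(\mathsf{HP}(B))$ because the operators $\nabla B$ and $B\nabla$ strictly raise the form-degree by $2$, while $\mathcal F_n$ is concentrated in degrees $\ge n$, so in each slot $\Omega^j(B)$ only finitely many summands of $h$ contribute. I would then verify the identity $(b+B)h + h(b+B) = \mathrm{id}_{\mathcal F_n(\mathsf{HP}(B))}$ by a telescoping computation from $[\nabla,b] = \mathrm{id}$, $B^2 = 0$, and $bB + Bb = 0$; the $\Q$-linearity of $\fC$ enters only implicitly through the definition of $B$. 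Equivalently, this is the homological perturbation lemma applied to the deformation retract of $(\prod_{k \ge n}\Omega^k(B), b)$ onto its bottom slot given by $\nabla$, perturbed by $\delta = B$, which is locally nilpotent relative to $\nabla$ for the degree-shift reason just noted. A contraction of $\mathcal F_n(\mathsf{HP}(B))$ in $\mathsf{HoKom}(\fC)$ implies that the quotient map $\mathsf{HP}(B) \onto X^{(n)}(B)$ is a chain homotopy equivalence, as required.

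The delicate point I expect to have to work through is the bottom degree of $\mathcal F_n(\mathsf{HP}(B))$, where the $n$-th component is constrained to lie in $b(\Omega^{n+1}(B))$ rather than all of $\Omega^n(B)$: one must check that applying $\nabla$ only to slots of degree $\ge n$ still produces a bona fide contracting homotopy, and that the contributions of $B$ landing in degree $n$ are correctly cancelled by the lowest terms of the geometric series. Provided the book-keeping with the Leibniz identity for $\nabla$ and the compatibility $bB + Bb = 0$ is carried out carefully, this is a routine verification rather than a genuine obstacle.
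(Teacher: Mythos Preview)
Your approach is correct and amounts to an explicit proof of the result the paper simply cites as \cite{Meyer:HLHA}*{Theorem A.123}. Both arguments agree on the first step: transfer finite bidimension along the isocohomological map (Corollary~\ref{cor:isocohomo-finite}) to obtain an \(n\)-connection \(\nabla\) on \(B\). The paper then stops and invokes Meyer's theorem, whereas you unpack it via the perturbation formula \(h = \sum_{k\ge 0}(-\nabla B)^k\nabla\). That formula does give a genuine contraction of \(\mathcal F_n(\mathsf{HP}(B))\): writing \(N = \nabla B\), the relations \(b\nabla + \nabla b = 1\), \(bB + Bb = 0\), \(B^2 = 0\) yield \([b,N^k] = BN^{k-1}\) and \(NB = 0\), from which the telescoping identity \((b+B)h + h(b+B) = 1\) follows without any flatness assumption \(\nabla^2 = 0\). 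The bottom-degree bookkeeping you flag is indeed routine: on \(b(\Omega^{n+1})\) one has \(b\nabla = \mathrm{id}\) directly, since \(b\nabla b = b\) follows by applying \(b\) to \(\nabla b + b\nabla = 1\) on \(\Omega^{n+1}\). Your write-up thus trades the black-box citation for a short self-contained computation, which is arguably preferable here given that the paper already recalls all the ingredients (the \(n\)-connection, the Hodge filtration, and \(X^{(n)}\)) needed to carry it out.
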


\begin{proof}
The hypotheses imply that there is an \(n\)-connection on \(B\) for some \(n\) by Theorem \ref{thm:main-abstract}. Now use \cite{Meyer:HLHA}*{Theorem A.123}. 
\end{proof}

\section{Homological algebra for completed group algebras}\label{section:homological-algebra-group}

In this section, we specialise to the situation where the base closed, symmetric monoidal category \(\mathsf{C} = \mathsf{CBorn}_{\dvr}\) - the category of complete bornological \(\dvr\)-modules with the completed projective tensor product. Our goal is to show that the canonical map from the group algebra to its \textit{dagger completion} is an isocohomological embedding for certain classes of groups. We briefly recall the definition of the dagger completion of a bornological \(\dvr\)-algebra, and some generalities about such algebras. Let \(R\) be a bornological \(\dvr\)-algebra. For any such algebra, we can define the \textit{linear growth bornology} on \(R\) as the bornology generated by the submodules \(\sum_{n=0}^\infty \dvgen^n S^{n+1}\), where \(S\) is a bounded \(\dvr\)-submodule of \(R\). This turns \(R\) into a \textit{semi-dagger} algebra, that is, the bounded submodules \(S\) of \(\ling{R}\) by construction satisfy the property that \(\sum_{n=0}^\infty \dvgen^n S^{n+1}\) are bounded. Finally, there are universal arrows \[R \to R^\tf, \quad R \to \ling{R}, \quad R \to \comb{R}\] from a bornological algebra to a bornologically torsionfree \(\dvr\)-algebra \(R^\tf \subseteq R \otimes \dvf\), a semi-dagger algebra and a complete bornological \(\dvr\)-algebra. These combine to define a universal algebra \(R \to R^\dagger\) into a complete, bornologically torsionfree \(\dvr\)-algebra, called a \textit{dagger algebra}.

\begin{example}\label{ex:p-adic-dagger}
Any \(\dvgen\)-adically complete Banach \(\dvr\)-algebra with the bornology where all subsets are bounded is a dagger algebra. 
\end{example}

Example \ref{ex:p-adic-dagger} gives several examples of dagger algebras, and also a recipe to produce them. Indeed, if we start with a torsionfree \(\dvr\)-algebra \(R\), then we can take the \(\dvgen\)-adic completion \(\coma{R} = \varprojlim_{n \in \N} R/ \dvgen^n R\) and equip it with the \(\dvgen\)-adic topology. Torsion-freeness implies that the semi-norm corresponding to the \(\dvgen\)-adic topology is a norm, and the resulting algebra is a Banach \(\dvr\)-algebra. 

From the perspective of invariants, however, \(\dvgen\)-adic completions of torsionfree \(\dvr\)-algebras are less well-behaved than the dagger completion \(R^\dagger\) - a problem already witnessed at the simplest example, namely, the polynomial ring \(\dvr[t]\) in one variable. The periodic cyclic homology of \(\dvr[t] \otimes \dvf\) coincides with rigid cohomology, which is infinite-dimensional in degree \(1\), unlike what we expect from the affine line (for a homotopy invariant theory). This is analogous to the bad behaviour of periodic cyclic homology for \(C^*\)-algebras, motivating one to pass to a smooth subalgebra. The situation in the nonarchimedean case is similar; we have canonical morphisms \[R \to R^\dagger \to \coma{R}\] for a torsionfree \(\dvr\)-algebra \(R\) viewed as a bornological \(\dvr\)-algebra with the initial bornology.

\begin{theorem}\cite{Meyer-Mukherjee:HL}\label{theorem:HL-reduction}
Let \(R\) be a torsionfree \(\dvr\)-algebra with the fine bornology. We then have weak equivalences \[\mathbb{HL}(R^\dagger) \simeq \mathbb{HA}(R/\dvgen R) \simeq \mathbb{HL}(\coma{R})\] between the local cyclic homology complexes of the dagger and \(\dvgen\)-adic completions of \(R\) and the analytic cyclic homology of the reduction mod \(\dvgen\). 
\end{theorem}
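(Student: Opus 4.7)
The plan is to establish the two weak equivalences separately, both pivoting on the observation that \(R^\dagger\) and \(\coma{R}\) are complete, bornologically torsionfree \(\dvr\)-algebras sharing a common mod-\(\dvgen\) reduction, namely \(R/\dvgen R\) itself. The target statement is therefore that local cyclic homology of either lifting depends only on this reduction, and equals the analytic cyclic homology of the reduction. This is in the spirit of Goodwillie's theorem adapted to the dagger/bornological setting.

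For the first equivalence \(\mathbb{HL}(R^\dagger) \simeq \mathbb{HA}(R/\dvgen R)\), I would recall that \(\mathbb{HA}(R/\dvgen R)\) is computed by the \(X\)-complex (tensored with \(\dvf\)) of a pro-dagger-nilpotent resolution \(\tens^\dagger(R/\dvgen R) \onto R/\dvgen R\), as in \cite{Cortinas-Meyer-Mukherjee:NAHA}. Since \(R\) is a torsionfree \(\dvr\)-algebra, the dagger completion \(R^\dagger\) is itself a complete torsionfree bornological lifting of \(R/\dvgen R\), and the universal property of the dagger tensor algebra yields a classifying map \(\tens^\dagger(R/\dvgen R) \to R^\dagger\) whose mod-\(\dvgen\) reduction factors through the canonical projection onto \(R/\dvgen R\). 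The remaining step is to argue that this map becomes a weak equivalence after applying \(\mathbb{HP}(- \otimes \dvf)\) and localising at the class of local quasi-isomorphisms. The key input is that in the local derived category, pro-dagger-nilpotent extensions are invertible — this is the nonarchimedean analogue of Goodwillie's theorem.

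For the second equivalence \(\mathbb{HA}(R/\dvgen R) \simeq \mathbb{HL}(\coma{R})\), the argument is structurally parallel with \(\coma{R}\) in place of \(R^\dagger\). The algebra \(\coma{R}\), viewed as a complete bornological \(\dvr\)-algebra with the \(\dvgen\)-adic bornology, is also a lifting of \(R/\dvgen R\), so one obtains a classifying map \(\tens^\dagger(R/\dvgen R) \to \coma{R}\) and runs the same localisation argument. Alternatively, one can exploit the universal arrow \(R^\dagger \to \coma{R}\), whose mod-\(\dvgen\) reduction is the identity, and deduce the second equivalence from the first once one knows that "mod-\(\dvgen\) identity" morphisms between such completions are local quasi-isomorphisms.

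The main obstacle is precisely this local-invariance principle: establishing that a morphism \(S \to T\) of complete, bornologically torsionfree \(\dvr\)-algebras that reduces modulo \(\dvgen\) to a weak equivalence (on \(\mathbb{HA}\)) induces an isomorphism on \(\mathbb{HL}\). This is a genuinely homological statement rather than a formal consequence of the definitions, and requires a careful analysis of how the local derived category is assembled from analytic data over the residue field. In the framework of \cite{Meyer-Mukherjee:HL}, this amounts to controlling the pro-systems underlying the tube algebra construction and checking that the \(X\)-complex functor sends pro-dagger-nilpotent extensions to quasi-isomorphisms after \(\dvgen\)-inversion. Once this reduction-invariance is in hand, both weak equivalences follow from universal-property bookkeeping.
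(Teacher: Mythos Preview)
The paper does not prove this theorem at all: it is stated as a citation of \cite{Meyer-Mukherjee:HL} and carries no proof environment. There is therefore nothing in the present paper to compare your proposal against; the result is imported wholesale from the referenced article.

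As to the content of your sketch, the overall strategy is the right shape for the cited result: both \(R^\dagger\) and \(\coma{R}\) are complete torsionfree liftings of \(R/\dvgen R\), and the core input is indeed a mod-\(\dvgen\) invariance statement for local cyclic homology together with the identification of \(\mathbb{HA}\) of an \(\resf\)-algebra via a dagger tube/tensor algebra over any such lifting. However, you should be aware that your write-up is a heuristic outline rather than a proof. The genuine work in \cite{Meyer-Mukherjee:HL} lies exactly where you flag the ``main obstacle'': one must show that the comparison maps on \(X\)-complexes are \emph{local} weak equivalences, which requires controlling the inductive systems of bounded submodules and the associated tube algebras, not merely invoking a Goodwillie-type slogan. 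Your phrase ``once this reduction-invariance is in hand, both weak equivalences follow from universal-property bookkeeping'' is accurate, but that reduction-invariance is the entire theorem, and you have not supplied an argument for it. If you intend this as more than a pointer to the literature, you would need to reproduce the filtered-colimit and nilpotence arguments from \cite{Meyer-Mukherjee:HL} explicitly.
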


Now let \(\tilde{\mathbf{K}}^{\an, \dagger}\) denote the overconvergent, stabilised analytic \(K\)-theory spectrum defined in \cite{mukherjee2022nonarchimedean}. This is a version of Weibel's homotopy algebraic \(K\)-theory spectrum that is homotopy invariant for \(\dvr[t]^\dagger\)-homotopies, stable with respect to the \(\dvgen\)-adic completion of the matrix algebra \(\mathbb{M}_\infty\), and excisive for semi-split extensions of complete, torsionfree bornological \(\dvr\)-algebras. Finally, just as in local cyclic homology, we have the following agreement with the reduction mod \(\dvgen\):

\begin{theorem}\cite{mukherjee2022nonarchimedean}\label{theorem:K-reduction}
Let \(R\) be a torsionfree \(\dvr\)-algebra with the fine bornology such that \(R^\dagger \subseteq \coma{R}\). We then have weak equivalences \[\tilde{\mathbf{K}}^{\an, \dagger}(R^\dagger) \simeq \mathbf{KH}(R/\dvgen) \simeq \tilde{\mathbf{K}}^{\an, \dagger}(\coma{R})\] of spectra. 
\end{theorem}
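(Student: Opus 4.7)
The plan is to exploit the three defining axioms of $\tilde{\mathbf{K}}^{\an,\dagger}$: excisiveness for semi-split extensions of complete, torsionfree bornological $\dvr$-algebras, homotopy invariance along $\dvr[t]^\dagger$, and stability with respect to the $\dvgen$-adic completion of $\mathbb{M}_\infty$. The strategy is to funnel both $\tilde{\mathbf{K}}^{\an,\dagger}(R^\dagger)$ and $\tilde{\mathbf{K}}^{\an,\dagger}(\coma{R})$ through the common quotient $R/\dvgen R$, and then identify the resulting spectrum with $\mathbf{KH}(R/\dvgen R)$ using Weibel's universal characterization.

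First, I would consider the semi-split extensions
\[\dvgen R^\dagger \into R^\dagger \onto R/\dvgen R, \qquad \dvgen \coma{R} \into \coma{R} \onto R/\dvgen R\]
in complete, torsionfree bornological $\dvr$-algebras. Here the hypothesis $R^\dagger \subseteq \coma{R}$ is what makes $R^\dagger$ bornologically torsionfree and ensures the quotient really is $R/\dvgen R$ with its fine bornology. Excision then produces fiber sequences
\[\tilde{\mathbf{K}}^{\an,\dagger}(\dvgen R^\dagger) \to \tilde{\mathbf{K}}^{\an,\dagger}(R^\dagger) \to \tilde{\mathbf{K}}^{\an,\dagger}(R/\dvgen R),\]
and analogously for $\coma{R}$. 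The central step is to prove that $\tilde{\mathbf{K}}^{\an,\dagger}$ vanishes on each of the ideals $\dvgen R^\dagger$ and $\dvgen \coma{R}$. For this I would build a Karoubi-style Eilenberg swindle, combining the $\dvgen$-adic matrix stability (which permits an infinite direct sum of copies of the ideal inside a suitable $\mathbb{M}_\infty$-completion) with a $\dvr[t]^\dagger$-homotopy that rescales elements via the overconvergent path $t \mapsto \dvgen t$. The defining property of the linear growth bornology, namely that $\sum_n \dvgen^n S^{n+1}$ is bounded for bounded $S \subseteq \ling{R^\dagger}$, is precisely what guarantees that such a homotopy is a bounded morphism into $(\dvgen R^\dagger)[t]^\dagger$, yielding a contraction of the identity to zero.

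Next, I would show that on an $\resf$-algebra such as $R/\dvgen R$, the functor $\tilde{\mathbf{K}}^{\an,\dagger}$ satisfies Weibel's three axioms for $\mathbf{KH}$: reduction mod $\dvgen$ collapses $\dvr[t]^\dagger$-homotopy invariance to $\resf[t]$-homotopy invariance, the $\dvgen$-adic matrix completion becomes ordinary $\mathbb{M}_\infty$-stability, and the restriction of the excision axiom to $\resf$-algebras is excision for semi-split extensions of $\resf$-algebras. Since the functor also agrees with ordinary connective $K$-theory on smooth $\resf$-algebras (which one verifies from the construction of $\tilde{\mathbf{K}}^{\an,\dagger}$ in \cite{mukherjee2022nonarchimedean}), the universal property of homotopy algebraic $K$-theory gives a natural weak equivalence $\tilde{\mathbf{K}}^{\an,\dagger}(R/\dvgen R) \simeq \mathbf{KH}(R/\dvgen R)$. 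Combining this with the excision fiber sequences and the vanishing of the kernel terms yields the two weak equivalences of the theorem simultaneously.

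The main obstacle will be the vanishing of $\tilde{\mathbf{K}}^{\an,\dagger}(\dvgen R^\dagger)$ and $\tilde{\mathbf{K}}^{\an,\dagger}(\dvgen \coma{R})$. The formal Eilenberg swindle is easy, but verifying that each rescaling step lives in the correct bornological category (complete, torsionfree, and with the appropriate dagger or $\dvgen$-adic completion) requires careful bookkeeping. In particular, matching up the $\dvr[t]^\dagger$-homotopy for the dagger case with the $\dvgen$-adic matrix stability on the $\coma{R}$ side is delicate, since $\coma{R}$ is not a dagger algebra; the homotopy must be constructed intrinsically from the $\dvgen$-adic topology using the fact that $\dvgen \coma{R}$ is topologically nilpotent, and then shown to be compatible with the map $R^\dagger \to \coma{R}$ so that both triangles commute.
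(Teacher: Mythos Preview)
The paper does not give a proof of this theorem; it is quoted with a citation to \cite{mukherjee2022nonarchimedean} and used as input, so there is no ``paper's own proof'' against which to compare your proposal.

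As for the proposal itself, the overall architecture---excision along the reduction modulo~\(\dvgen\), contractibility of the kernel, and identification of the value on the residue algebra with Weibel's \(\mathbf{KH}\)---is the standard shape of such arguments and is likely close to what the cited reference does. Two points deserve care. First, the quotient \(R/\dvgen R\) is an \(\resf\)-algebra and in particular is not bornologically torsionfree, so the excision triangle you write down does not literally live in the category on which \(\tilde{\mathbf{K}}^{\an,\dagger}\) is described here; one has to either extend the domain of the functor or phrase the argument as a comparison of cofibres rather than invoking the functor on \(R/\dvgen R\) directly. Second, your sketch of the vanishing step conflates two distinct mechanisms: an Eilenberg swindle (which uses infinite matrices and a shift) and a nilpotence contraction (which uses that products in \(\dvgen R^\dagger\) pick up extra powers of \(\dvgen\)). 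The map \(a\mapsto ta\) is not multiplicative, so ``rescaling via \(t\mapsto\dvgen t\)'' is not by itself a dagger homotopy of algebra maps; the actual argument for analytic nilpotence invariance in this setting passes through the tensor algebra or tube-algebra machinery rather than a naive linear contraction. These are repairable, but the proposal as written does not yet supply the missing ingredient.
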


Theorems \ref{theorem:HL-reduction} and \ref{theorem:K-reduction} together imply that for \(K\)-theory and cyclic homology computations, it does not matter which completion we work with - recovering the well-known fact from \(C^*\)-algebras that the appropriate versions of local cyclic homology and \(K\)-theory are invariant under passage to isoradial embeddings \cite{Cuntz-Meyer-Rosenberg}. The additional feature in the nonarchimedean realm is that these invariants only depend on the reduction mod \(\dvgen\) of the algebra.

Having stated the general results on analytic \(K\)-theory and local cyclic homology, we now specialise our discussion to group algebras \(\dvr[G]\). In light of the analogies drawn in this section, the \(\dvgen\)-adically completed group algebra \(\coma{\dvr[G]}\) should be viewed as the nonarchimedean analogue of a \(C^*\)-algebra. In fact, for countable groups, Claussnitzer and Thom represent this algebra on a \(p\)-adic version of a Hilbert space in \cite{claussnitzer2019aspects}, so our analogy is not merely philosophical.  We now describe the dagger completion \(\dvr[G]^\dagger\) of the group ring of a finitely generated discrete group as a bornological algebra, following \cite{Meyer-Mukherjee:Bornological_tf}.   Let \(F\) be a finite generating set of \(G\) and let \(F_n\) be the set of all elements \(g_1\cdots g_k\) with \(g_1,\dotsc, g_k \in F\), and \(k \leq n\). This is an increasing filtration of \(G\) with \(F_0 = \{e\}\) and \(G = \bigcup_{n = 0}^\infty F_n\). For each \(g \in G\), let \(l(g) \in \N\) be the smallest \(n \in \N\) such that \(g \in F_n\). We equip \(\dvr[G]\) with the fine bornology, so every bounded \(\dvr\)-submodule is contained in \(\dvr[F_n]\) for some \(n\). By definition, a submodule of \(\dvr[G]\) has linear growth if and only if it is contained in \(M_n \defeq \sum_{j=0}^\infty \dvgen^j \dvr[F_n]^{j+1}\), for some \(n \in \N\). That is, \[\ling{\dvr[G]} = \varinjlim M_n.\]

To compute its completion, we first observe that \(M_n = \setgiven{\sum_{g \in G} x_g \delta_g}{\nu(x_g) + 1 \geq l(g)/n}\). So its completion is given by \[\coma{M_n} = \setgiven{\sum_{g \in G} x_g \delta_g}{\nu(x_g) + 1 - l(g)/n \overset{l(g) \to \infty}\to \infty}.\] Furthermore, \(\coma{M_n} \subseteq \coma{M_{n+1}}\), so that \(\dvr[G]^\dagger = \varinjlim \coma{M_n}\). More explicitly, we have
\[\dvr[G]^\dagger = \setgiven{\sum_{g \in G} x_g \delta_g}{\nu(x_g) + 1 \geq c \cdot l(g), c> 0},\] with the bornology where a subset of such power series is bounded if and only if the coefficients of all its elements satisfy \(\nu(a_g) + 1 \geq c \cdot l(g)\) for the same positive constant \(c\). We can view elements of \(\dvr[G]^\dagger\) as functions \(f \colon G \to \dvr\) that are of the form \(f = \sum_{g \in G} a_g \delta_g\), where the coefficients satisfy \(\nu(a_g) + 1 \geq c \cdot l(g)\) for some \(c>0\).

\subsection{Resolutions of completed group algebras}\label{subsec:resolutions-trivial}

Let \(X\) be a nonempty set. We define the \textit{free simplicial set} \(S(X)\) generated by \(X\) as follows: let \(S_n(X) = X^{n+1}\). The \(j\)-th face map \(S_n(X) \to S_{n-1}(X)\) omits \(x_j\) and the \(j\)-th degeneracy map \(S_n(X) \to S_{n+1}(X)\) duplicates \(x_j\). Let \(C_n(X,\dvr)\) be the free \(\dvr\)-module generated by the simplicies in \(S_n(X)\).  Equivalently, \(C_n(X,\dvr)\) is the \(\dvr\)-module of functions \(f \colon X^{n+1} \to \dvr\) of finite support. Equip \(C_n(X,\dvr)\) with the fine bornology. This is a chain complex in the category of complete, torsionfree bornological \(\dvr\)-modules with differential \begin{multline*}
\delta_n \colon C_n(X,\dvr) \to C_{n-1}(X,\dvr), \\
 \delta_n(f)(x_0,\dotsc, x_{n-1}) = \sum_{j=0}^n \sum_{y \in X} (-1)^j f(x_0,\dotsc, x_{j-1}, y, x_j,\dotsc, x_{n-1}),
\end{multline*}   called the \textit{bar complex} on \(X\). A related complex is the \textit{reduced bar complex} \(\bar{C}_n(X,\dvr)\), which is the \(\dvr\)-linear span of non-degenerate simplices, that is, tuples \((x_0,\dotsc,x_n) \in X^{n+1}\) such that \(x_i \neq x_{i+1}\) for all \(i\). These are equivalently functions \(f \colon X^{n+1} \to \dvr\) of finite support such that \(f(x_0,\dotsc, x_n) = 0\) whenever \(x_i = x_{i+1}\) for some \(i\).

The bar complex is functorial for set-theoretic maps. Let \(f \colon X \to Y\) be a map of nonempty sets; this induces a morphism of simplicial sets \(S(X) \to S(Y)\) via \(f_*(x_0,\dotsc, x_n) \defeq (f(x_0),\dotsc, f(x_n))\), which extends to a chain map by linearity \(f_* \colon C_n(X,\dvr) \to C_n(Y,\dvr)\) via \[f_*(x_0,\dotsc,x_n) \defeq (f(x_0),\dotsc,f(x_n)),\] for \((x_0,\dotsc,x_n) \in X^{n+1}\). To get an induced chain map at the level of non-degenerate chains on \(G\), we set \[
f^*(x_0,\dotsc,x_n) \defeq 
\begin{cases}
(f(x_0),\dotsc,f(x_n)), \quad f(x_i) \neq f(x_{i+1}), i \in \{0,\dotsc,n-1\} \\
0 \quad \text{ else }
\end{cases}
\] for \((x_0,\dotsc,x_n) \in \bar{C}_n(X,\dvr)\). We augment the complex \((C_n(X,\dvr),\delta_n)\) (respectively, \((\bar{C}_\bullet(X,\dvr), \delta)\) with the map \(\alpha \colon C_0(X,\dvr) = \dvr[X] \to \dvr\), \(\alpha(f) = \sum_{x \in X} f(x)\), where \(\dvr[X]\) is the \(\dvr\)-module of functions of finite support on \(X\).

Let \[\tilde{C}_n(X,\dvr) = \begin{cases}
\ker(\alpha), n = 0 \\
C_n(X,\dvr), n \geq 1.
\end{cases}
\] 

\noindent That is, \(\tilde{C}_\bullet(X,\dvr)\) is the kernel of the obvious extension of the augmentation map \(\alpha \colon C_0(X,\dvr) \to \dvr\) to a chain map \(C_\bullet(X,\dvr) \to \dvr[0]\) by the zero morphisms in nonzero degree. The following standard argument shows that \(\tilde{C}_n(X,\dvr)\) is contractible for any non-empty set \(X\):

\begin{lemma}\label{lem:trivial-complex}
Let \(X\) be a non-empty set. Then the (reduced) bar complex \((C_\bullet(X,\dvr), \delta)\) (respectively, \((\bar{C}_\bullet(X,\dvr), \delta)\)) is  chain homotopy equivalent to \(\dvr[0]\).
\end{lemma}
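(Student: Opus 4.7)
Since $X$ is nonempty, fix a basepoint $x_* \in X$. My plan is to exhibit an explicit contracting homotopy for the augmented bar complex $\cdots \to C_1(X,\dvr) \to C_0(X,\dvr) \xrightarrow{\alpha} \dvr \to 0$, which then displays $\alpha$ as a chain homotopy equivalence onto $\dvr[0]$.

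First, define a $\dvr$-linear section $\eta \colon \dvr \to C_0(X,\dvr) = \dvr[X]$ of the augmentation by $\eta(1) = [x_*]$, so that $\alpha \circ \eta = \mathrm{id}_\dvr$. Then define $s_n \colon C_n(X,\dvr) \to C_{n+1}(X,\dvr)$ on generators by prepending the basepoint, $s_n[x_0,\dotsc,x_n] = [x_*, x_0, \dotsc, x_n]$. In function terms this reads $s_n(f)(y_0,\dotsc,y_{n+1}) = \delta_{y_0, x_*}\, f(y_1,\dotsc,y_{n+1})$, which makes it evident that $s_n$ is bounded for the fine bornology on the finitely supported functions.

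Next, verify the chain homotopy identity $\delta \circ s + s \circ \delta = \mathrm{id} - \eta \circ \alpha$ on generators. Using the simplicial formula $\delta[x_0,\dotsc,x_n] = \sum_{j=0}^n (-1)^j [x_0,\dotsc,\hat{x}_j,\dotsc,x_n]$, the $j=0$ face of $\delta[x_*,x_0,\dotsc,x_n]$ returns $[x_0,\dotsc,x_n]$, while the remaining faces, after reindexing $k = j-1$, contribute exactly $-s_{n-1}\delta[x_0,\dotsc,x_n]$; for $n=0$ the computation collapses to $\delta_1 s_0[x_0] = [x_0] - [x_*] = [x_0] - \eta\alpha[x_0]$. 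This proves the (unreduced) case: $C_\bullet(X,\dvr) \simeq \dvr[0]$ in $\mathsf{Kom}(\mathsf{CBorn}_\dvr)$.

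Finally, handle the reduced bar complex. The subtlety is that $s_n$ can produce degenerate simplices (when $x_0 = x_*$), so it does not restrict cleanly to the non-degenerate span. I would argue instead that the subcomplex $D_\bullet \subseteq C_\bullet(X,\dvr)$ of degenerate chains is contractible by the standard Moore/Eilenberg-Zilber argument (the contracting homotopies there are again combinatorial and bounded in the fine bornology), so the projection $C_\bullet(X,\dvr) \to C_\bullet(X,\dvr)/D_\bullet \cong \bar C_\bullet(X,\dvr)$ is a chain homotopy equivalence. Composing with the equivalence $\alpha \colon C_\bullet(X,\dvr) \simeq \dvr[0]$ just produced, and noting that $\alpha$ factors through the quotient by degenerate $0$-simplices (there are none in degree $0$), yields the reduced version of the lemma.

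The main technical point is keeping the signs straight in the verification of $\delta s + s\delta = \mathrm{id} - \eta\alpha$; the reduced case is conceptually easy once one invokes contractibility of the degenerate subcomplex, but one should check that the implicit homotopies are bounded in the bornological setting, which is automatic since all constructions are combinatorial on finitely supported generators.
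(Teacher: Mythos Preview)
Your argument is correct: the front-face/extra-degeneracy contraction $s_n[x_0,\dotsc,x_n]=[x_*,x_0,\dotsc,x_n]$ is the standard way to contract the bar complex, and invoking the normalisation theorem (contractibility of the degenerate subcomplex) is a perfectly valid route to the reduced statement.

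The paper takes a different but equally classical route. Rather than fixing a basepoint and writing down a single contraction, it proves the functorial statement that for any two set maps $f,g\colon X\to Y$ the induced chain maps $f_*,g_*$ are homotopic via the prism operator
\[
H(f,g)(x_0,\dotsc,x_n)=\sum_{j=0}^n(-1)^j\bigl(f(x_0),\dotsc,f(x_j),g(x_j),\dotsc,g(x_n)\bigr),
\]
and then specialises to the composite $X\to *\to X$ versus $\mathrm{id}_X$. For the reduced complex the paper does not pass to a quotient as you do; instead it uses the modified chain maps $f^*$, which kill a simplex whenever the image becomes degenerate, and observes that the same prism formula still gives a homotopy on non-degenerate chains.

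The practical difference is that the paper's formulation is not just cosmetic: the prism homotopy $H(f,g)$ is the object that gets reused verbatim in the proof of Theorem~\ref{main:1}, where one sums $\sum_{j\ge 0}H(f_j,f_{j+1})$ over a combing $(f_j)$ and must estimate the growth of the resulting operator. Your basepoint contraction is the special case $f=\mathrm{const}_{x_*}$, $g=\mathrm{id}$ of that prism, but it does not by itself exhibit the functorial two-variable homotopy needed later. Likewise, your reduced argument via the degenerate subcomplex is clean here, but the paper's approach of simply zeroing out degenerate outputs is what makes the later boundedness estimates on $\bar C_\bullet(G,\dvr)^\dagger$ transparent. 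So both proofs are fine for this lemma in isolation; the paper's version is chosen because it feeds directly into the main theorem.
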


\begin{proof}
Let \(f,g \colon X \to Y\) be two maps of non-empty sets. The induced chain maps  \(f_*,g_* \colon C_\bullet(X,\dvr) \to C_\bullet(Y,\dvr)\) are chain homotopic via 
\begin{multline*}
H(f,g) \colon C_n(X,\dvr) \to C_{n+1}(Y,\dvr), \\
(x_0, \dotsc, x_n) \mapsto \sum_{j=0}^n (-1)^j (f(x_0),\dotsc, f(x_j), g(x_j),\dotsc, g(x_n)).
\end{multline*}
Applying this in particular to the obvious maps \(X \to * \to X\), we get the desired chain homotopy equivalence between \(C_\bullet(X,\dvr)\) and \(C_\bullet(*,\dvr) \cong \dvr[0]\). The proof for the reduced bar complex is similar, where we use the induced chain maps on nondegenerate simplices instead. 
\end{proof}

Lemma \ref{lem:trivial-complex} says that \(C_\bullet(X,\dvr)\) is a resolution of \(\dvr\) in the category of complete bornological \(\dvr\)-modules. We now consider the case where \(X = G\) is a group, equipping the \(\dvr\)-modules \(C_\bullet(G,\dvr)\) with the diagonal \(G\)-action \[g\cdot(x_0,\dotsc,x_n) \defeq (gx_0,\dotsc, gx_n),\] and \(\dvr\) with the trivial \(G\)-action. The differentials \(\delta_n \colon C_n(G,\dvr) \to C_{n-1}(G,\dvr)\) and the augmentation map \(\dvr[G] \to \dvr\) are \(G\)-equivariant for these actions.  Now consider the \(\dvr\)-submodule \(C_n'(G,\dvr)\) of \(C_n(G,\dvr)\) generated by \((1,g_1,\dotsc,g_n)\). We then have a \(\dvr[G]\)-module isomorphism \[\dvr[G] \otimes C_n'(G,\dvr) \cong C_n(G,\dvr),\] given by \(g \otimes (1,g_1,\dotsc,g_n) \mapsto (g, gg_1, \dotsc,gg_n)\), where the left hand side is equipped with the free \(\dvr[G]\)-module structure given by algebra multiplication. This exhibits \(C_n(G,\dvr)\) as a free \(\dvr[G]\)-module in the category \(\Mod_{\dvr[G]}\) of complete bornological \(\dvr[G]\)-modules. 

We now compute the ``dagger completion" of the complex \(C_\bullet(G,\dvr)\), for a finitely generated discrete group \(G\). What this specifically means is that we complete the complex \((C_\bullet(G,\dvr),\delta)\) under suitable growth conditions, which in the case \(n = 0\) specialises to the dagger completion \(\dvr[G]^\dagger\) of the group algebra. We also place conditions on functions \(G^n \to \dvr\) that link them to the underlying large geometry of the group \(G\). To this end, view \(G\) as a (discrete) proper metric space, with the word-length metric \(d(g,h) = l(g^{-1}h)\). For each \(R \in \N\), we define \(C_n(G,
\dvr)_R^\an\) as the \(\dvr\)-module of functions \(f \colon G^{n+1} \to \dvr\) of finite support that satisfy:

\begin{itemize}
\item \(f(g_0,\dotsc,g_n) = 0\) if \(d(g_i,g_j) > R\) for \(i,j \in \{0,\dotsc, n\}\).
\item For all \((g_1,\dotsc,g_n) \in G^n\), the function \(G \ni g \mapsto f(g,gg_1,\dotsc,gg_n) \in \dvr\) lies in \(\ling{\dvr[G]}\). 
\end{itemize} 

We equip this \(\dvr\)-module with the bornology where a subset \(S_{c,d} \subseteq C_n(G,\dvr)_R^\an\) of functions \(f \colon G^{n+1} \to \dvr\) as above is bounded if and only if there are \(c\), \(d>0\) such that \(\nu(f (gg_0, gg_1,\dotsc, gg_n)) +1 \geq cl(g) + d\) for all \(f \in S_{c,d}\), \(g \in G\) and \((g_0,\dotsc,g_n)\in G^{n+1}\). More symmetrically, \(f \in S_{c,d}\) if and only if \[\nu(f(g_0,\dotsc,g_n)) + 1 \geq c(l(g_0) + \cdots + l(g_n)) + d\] for all \((g_0,\dotsc,g_n) \in G^{n+1}\). Any such bounded subset is contained in the bounded submodule generated by finite sums of the form \[\sum_{(g_0,\dotsc,g_n) \in G^{n+1}} a_{(g_0,\dotsc,g_n)} \delta_{(g_0,\dotsc,g_n)}\] such that \(d(g_i,g_j) \leq R\) for all \(i,j\) and \(\nu(a_{(g_0,\dotsc,g_n)}) + 1 \geq c (l(g_0) + \cdots + l(g_n)) + d,\) for some \(c\), \(d>0\), and all \((g_0,\dotsc,g_n) \in G^{n+1}\). Viewing \(S_{c,d}\) as a subset of \(\bigoplus_{G^{n+1}} \dvr\),  its elements are finite sums of weighted tuples \(\sum_{(g_0,\dotsc,g_n) \in G^{n+1}} a_{(g_0,\dotsc,g_n)} (g_0,\dotsc,g_n)\) such that \(d(g_i,g_j) \leq R\) and \(\nu(a_{(g_0,\dotsc,g_n)}) + 1 \geq c (l(g_0) +\cdots + l(g_n)) + d\) for some \(c\), \(d>0\), and all \((g_0,\dotsc,g_n) \in G^{n+1}\). Note that by modifying the parameters \(c\) and \(d\), we may arrange that \(S_{c,d} = S_{C,1}\) for some \(C>0\), which we denote by \(S_C\) to simplify notation. 

Taking the bornological inductive limit \(C_n(G,\dvr)^\an \defeq \varinjlim_R C_n(G,\dvr)_R^\an\), we obtain a bornological \(\dvr\)-module, which is not yet complete. Taking the completion in the bornology generated by the \(S_C \subseteq C_n(G,\dvr)_R^\an\) for varying \(R\) and \(C>0\), we get a complete bornological \(\dvr\)-module, which we denote by \(C_n(G,\dvr)^\dagger\).  Finally, the differentials \(\delta_n \colon C_n(G,\dvr) \to C_{n-1}(G,\dvr)\) extend to bounded \(\dvr\)-linear maps on \(C_n(G,\dvr)^\dagger \to C_{n-1}(G,\dvr)^\dagger\). Similarly, we can define a reduced version \((\bar{C}_\bullet(G,\dvf)^\dagger, \delta)\), wherein we additionally require that the functions \(f \colon G^{n+1} \to \dvr\) in \(C_n(G,\dvr)^\dagger\) satisfy \(f(x_0,\dotsc,x_n) = 0\) if \(x_i = x_{i+1}\) for some \(i\). We equip the bornological \(\dvr\)-modules \(C_n(G,\dvr)^\dagger\) with a \(\dvr[G]^\dagger\)-module structure by extending the diagonal \(G\)-action \[g\cdot(g_1,\dotsc,g_n) \defeq (gg_1,\dotsc, gg_n),\] noting that the bornology on the completed complex \(C_n(G,\dvr)^\dagger\) is defined in a way to ensure that the diagonal action extends an action of \(\dvr[G]^\dagger\). Finally, the differentials on \(C_\bullet(G,\dvr)^\dagger\) are \(G\)-equivariant and hence extend to \(\dvr[G]^\dagger\)-module homomorphisms.

\begin{lemma}\label{lem:free-complete-dagger}
We have an isomorphism \[C_n(G,\dvr)^\dagger \cong \dvr[G]^\dagger \hot_{\dvr[G]} C_n(G,\dvr)\] of chain complexes of bornological \(\dvr[G]^\dagger\)-modules. This descends to an isomorphism \(\bar{C}_n(G,\dvr)^\dagger \cong \dvr[G]^\dagger \hot_{\dvr[G]} \bar{C}_n(G,\dvr)\) of reduced complexes. In particular, the bornological \(\dvr\)-modules \(C_n(G,\dvr)^\dagger\) and \(\bar{C}_n(G,\dvr)^\dagger\) are free as \(\dvr[G]^\dagger\)-modules. 
\end{lemma}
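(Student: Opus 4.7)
The plan is to reduce the statement to the \(\dvr[G]\)-module decomposition \(C_n(G,\dvr) \cong \dvr[G] \otimes C_n'(G,\dvr)\) recorded just before the lemma, where \(C_n'(G,\dvr)\) is the \(\dvr\)-span of tuples \((1,g_1,\dotsc,g_n)\). Algebraically this already gives
\[
\dvr[G]^\dagger \hot_{\dvr[G]} C_n(G,\dvr) \;\cong\; \dvr[G]^\dagger \hot C_n'(G,\dvr),
\]
and my task is to match the right-hand side bornologically with \(C_n(G,\dvr)^\dagger\). Decomposing \(C_n'(G,\dvr) = \varinjlim_R C_n'(G,\dvr)_R\), with \(C_n'(G,\dvr)_R\) the free \(\dvr\)-module on the \emph{finite} set \(\Sigma_R\) of tuples \((1,h_1,\dotsc,h_n)\) for which \(d(h_i,h_j) \leq R\) (setting \(h_0 = e\)), the right-hand side becomes \(\varinjlim_R \bigoplus_{\sigma \in \Sigma_R} \dvr[G]^\dagger\cdot\sigma\).

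The candidate isomorphism sends \(a \otimes (1,h_1,\dotsc,h_n)\), with \(a = \sum_{g_0} a_{g_0}\delta_{g_0} \in \dvr[G]^\dagger\), to the function \((g_0,g_1,\dotsc,g_n)\mapsto a_{g_0}\) supported where \(g_i = g_0 h_i\) for all \(i\), extended by zero. On finitely supported elements this is algebraically inverse to the map \(f \mapsto (a^\sigma)_{\sigma\in\Sigma_R}\) with \(a^\sigma_{g_0} \defeq f(g_0, g_0 h_1, \dotsc, g_0 h_n)\), so the only real work is verifying that the map and its inverse are bounded. For \(\sigma = (1,h_1,\dotsc,h_n) \in \Sigma_R\) fixed, the triangle inequality together with \(l(h_i) \leq R\) (forced by \(d(e,h_i) \leq R\)) yields the uniform comparison
\[
(n+1)\,l(g_0) - (n+1)R \;\leq\; \sum_{i=0}^n l(g_0 h_i) \;\leq\; (n+1)\,l(g_0) + (n+1)R,
\]
so the dagger growth condition \(\nu(a^\sigma_{g_0}) + 1 \geq c\, l(g_0)\) is equivalent, after rescaling \(c\) and adjusting the additive constant, to the bornology condition \(\nu(a^\sigma_{g_0})+1 \geq c'\bigl(l(g_0)+\dotsb+l(g_0 h_n)\bigr) + d\) defining bounded subsets of \(C_n(G,\dvr)_R^\an\). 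Finiteness of \(\Sigma_R\) makes this comparison uniform in \(\sigma\), hence across all basis tuples simultaneously.

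Since bornological completion commutes with filtered colimits, passing to the completion and then to the inductive limit over \(R\) yields the claimed isomorphism \(\dvr[G]^\dagger \hot_{\dvr[G]} C_n(G,\dvr) \cong C_n(G,\dvr)^\dagger\). Both \(\dvr[G]^\dagger\)-equivariance and compatibility with the differentials are visible directly from the formula above, and the reduced case follows by restricting \(\Sigma_R\) to non-degenerate tuples, with no alteration to the estimates. The main obstacle I anticipate is the bornological bookkeeping: one must check carefully that the completion of each \(C_n(G,\dvr)_R^\an\) is precisely the finite direct sum \(\bigoplus_{\sigma \in \Sigma_R} \dvr[G]^\dagger \cdot \sigma\), and that these identifications are compatible with the inclusions \(C_n(G,\dvr)_R^\an \hookrightarrow C_n(G,\dvr)_{R+1}^\an\) so that the colimit really produces the module \(C_n(G,\dvr)^\dagger\) described explicitly in the text. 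The growth-equivalence displayed above is precisely what is needed to make both points go through.
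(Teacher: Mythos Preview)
Your proposal is correct and follows essentially the same route as the paper: both arguments use the free-module decomposition \(C_n(G,\dvr)\cong \dvr[G]\otimes C_n'(G,\dvr)\), establish for each fixed \(R\) a bornological isomorphism between \(C_n(G,\dvr)_R^\an\) and \(\ling{\dvr[G]}\otimes C_n'(G,\dvr)_R\), and then pass to the colimit over \(R\) and complete. The paper attributes the level-\(R\) isomorphism simply to ``left invariance of the metric on \(G\)'', whereas you spell this out via the triangle-inequality estimate and the finiteness of \(\Sigma_R\); your version is more explicit but not a different argument.
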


\begin{proof}
For a fixed \(R>0\), we have an isomorphism of bornological \(\dvr\)-modules \[\ling{\dvr[G]} \otimes C_n'(G,\dvr)_R \to C_n(G,\dvr)_R^\an, \quad  g \otimes (1,g_1,\dotsc,g_n) \mapsto (g,gg_1,\dotsc, gg_n),\] by the left invariance of the metric on \(G\). Taking the inductive limit over \(R\) and then applying the completion functor, we get an isomorphism \[\comb{\ling{\dvr[G]} \otimes C_n'(G,\dvr)} \cong \dvr[G]^\dagger \hot C_n'(G,\dvr) \cong C_n(G,\dvr)^\dagger,\] using that \(C_n'(G,\dvr)\) has the fine bornology and is therefore already complete.  To promote this to an isomorphism of \(\dvr[G]^\dagger\)-modules, we equip the left hand side with the module stucture \[\dvr[G]^\dagger \otimes \dvr[G]^\dagger \otimes C_n'(G,\dvr)_R^\dagger \to \dvr[G]^\dagger \otimes C_n'(G,\dvr), f \otimes (h \otimes k) \mapsto f \star h \otimes k,\] where \(f \star h\) is the multiplication of the algebra \(\dvr[G]^\dagger\). Now using that inductive limits commute with the completed projective bornological tensor product, we have an isomorphism of \(\dvr[G]^\dagger\)-modules \[\dvr[G]^\dagger \hot C_n'(G,\dvr) \cong C_n(G,\dvr)^\dagger,\] for each \(n \in \N\). From this it follows that \[\dvr[G]^\dagger \hot_{\dvr[G]} C_n(G,\dvr) \cong \dvr[G]^\dagger \hot_{\dvr[G]}\dvr[G] \hot C_n'(G,\dvr) \cong C_n(G,\dvr)^\dagger\] for each \(n \in \N\), as required.  The same proof applies to yield the claims about non-degenerate simplices. 
\end{proof}

We now have a chain complex \(C_n(G,\dvr)^\dagger\) of free \(\dvr[G]^\dagger\)-modules. In fact by Lemma \ref{lem:free-complete-dagger}, the chain complex \(C_\bullet(G,\dvr)^\dagger\) arises as the image of the base change functor 
\[f_! \colon \Mod_{\dvr[G]} \to \Mod_{\dvr[G]^\dagger}, \quad M \mapsto \dvr[G]^\dagger \otimes_{\dvr[G]} M,\] for the canonical map \(f \colon \dvr[G] \to \dvr[G]^\dagger\). Furthermore, since \(\dvr\) with the fine bornology is a dagger algebra,  the trivial representation on \(\dvr\) extends to a \(\dvr[G]^\dagger\)-module structure on \(\dvr\).  The augmentation map \(\alpha \colon \dvr[G] \to \dvr\) extends to a bounded \(G\)-equivariant map \(\alpha \colon \dvr[G]^\dagger \to \dvr\), so that we have a chain complex \[C_\bullet(G,\dvr)^\dagger \overset{\alpha}\to \dvr \to 0\] of free \(\dvr[G]^\dagger\)-modules. At this point it is tempting to use the identification \[\dvr[G]^\dagger \otimes_{\dvr[G]} C_n(G,\dvr) \cong C_n(G,\dvr)^\dagger\] from Lemma \ref{lem:free-complete-dagger} to declare that \(C_\bullet(G,\dvr)^\dagger\) is a free \(\dvr[G]^\dagger\)-module \emph{resolution} of \(\dvr\). But since the contracting homotopy of \(C_\bullet(G,\dvr)\) is only \(\dvr\)-linear, this fact alone does not imply that \(C_\bullet(G,\dvr)^\dagger\) is contractible. Moreover, as discussed in \cite{Meyer:Combable}*{Proposition 2}, this complex is also not necessarily contractible as only quasi-isometries induce chain homotopies on the tempered complex, and the identity is not such a map unless the group is finite. We therefore need some further information on the large scale geometry of the group:


\begin{definition}\label{def:combing}
A \textit{combing} on a proper discrete metric space \(X\) with a fixed base point \(* \in X\) is a sequence of morphisms \(f_n \colon G \to G\) such that:
\begin{itemize}
\item for all \(x \in X\), \(f_0(x) = *\) and there is an \(n \in \N\) such that \(f_N(x) = x\) for all \(N \geq n\);
\item there is a \(C > 0\) such that \(d(f_n(x), f_n(y)) \leq C(d(x,y) + 1)\) for all \(x,y \in X\) and \(n \in \N\);
\item there is an \(S > 0\) such that \(d(f_n(x), f_{n+1}(x)) \leq S\) for all \(x \in X\), \(n \in \N\). 
\end{itemize} 

For each \(x \in X\), let \(J(x)\) be the number of \(n \in \N\) such that \(f_n(x) \neq f_{n+1}(x)\). Since the sequence \((f_n(x))_{n \in \N}\) is eventually constant, \(J(x)< \infty\). We say a combing has \textit{polynomial growth} of order \(m \in \N\) if there is a \(C>0\) such that \(J(x) \leq C(l(x) + 1)^m\) for some \(m \in \N\). One can similarly combings with other growth conditions, such as (sub)-exponential growth.  
\end{definition}

The class of groups with polynomial growth combings is very large. For instance, it contains all hyperbolic groups, and therefore all finitely generated free nonabelian groups and free groups. In the case of a finitely generated free group \(\mathbb{F}_r\), consider a set of generators \(s(1),\dotsc,s(r)\) satisfying \(s(r+j) =s(j)^{-1}\), for \(j = 1,\dotsc, r\). For the word length function \(l\) relative to these generators, we define the metric \(d(g,h) = l(g^{-1} h)\). We can define a combing as follows: given a reduced word \(g \in \mathbb{F}_r\) written as \(g = s(i_1)\cdots s(i_l)\), \(f_n(g) \defeq s(i_1)\cdots s(i_n)\) for \(0 \leq n < l\), and \(f_n(g) = g\) for \(n \geq l\). It is easy to see that this really is a combing, and \(J(g) = l(g)\) for this combing, so we get a combing of linear growth.

\begin{theorem}\label{main:1}
Let \(G\) be a combable group of polynomial growth. The augmentation map \(\alpha \colon \dvr[G]^\dagger \to \dvr\) induces a chain homotopy equivalence \(\bar{C}_\bullet(G,\dvr)^\dagger \to \dvr[0]\). Consequently, \(\bar{C}_n(G,\dvr)^\dagger\) is a free \(\dvr[G]^\dagger\)-module resolution of \(\dvr\). 
\end{theorem}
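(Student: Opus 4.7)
The plan is to construct an explicit $\dvr$-linear contracting homotopy $h$ on the reduced bar complex $\bar{C}_\bullet(G,\dvr)$ using the combing maps, and then verify that $h$ extends boundedly to the dagger completion $\bar{C}_\bullet(G,\dvr)^\dagger$. The starting point is the homotopy construction from Lemma \ref{lem:trivial-complex}: for any pair of set-theoretic maps $f,g\colon G\to G$ one has a chain homotopy $H(f,g)$ between the induced chain maps, where terms producing degenerate simplices are set to zero in the reduced complex. Specialising to the combing sequence $f_0,f_1,f_2,\dotsc$ with $f_0\equiv *$ and $f_N=\id$ eventually on any fixed tuple, the $\dvr$-linear operator
\[
h \defeq \sum_{k\geq 0} H(f_k,f_{k+1})
\]
is well defined on each non-degenerate simplex because only finitely many summands act non-trivially. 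A direct computation yields $[\delta,h]=\id-\iota\circ\alpha$ on $\bar{C}_\bullet(G,\dvr)$, where $\iota\colon\dvr\to\bar{C}_0(G,\dvr)$ sends $1$ to $\delta_*$.

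The crucial step is to verify that $h$ extends to a bounded $\dvr$-linear map $\bar{C}_n(G,\dvr)^\dagger\to\bar{C}_{n+1}(G,\dvr)^\dagger$. Fix a generator $\sum_{\mathbf{g}}a_{\mathbf{g}}\delta_{\mathbf{g}}$ of a bounded subset $S_C$ of $\bar{C}_n(G,\dvr)_R^{\an}$, so $d(g_i,g_j)\leq R$ and $\nu(a_{\mathbf{g}})+1\geq c\sum_i l(g_i)+d$ for some $c,d>0$. The simplices appearing in $H(f_k,f_{k+1})(g_0,\dotsc,g_n)$ are of the form $(f_k(g_0),\dotsc,f_k(g_j),f_{k+1}(g_j),\dotsc,f_{k+1}(g_n))$. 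The Lipschitz condition $d(f_n(x),f_n(y))\leq C(d(x,y)+1)$ together with $d(f_n(x),f_{n+1}(x))\leq S$ bounds the diameter of such output simplices by a constant $R'$ depending only on $R$ and the combing data. Similarly, $l(f_k(g))\leq C(l(g)+1)$ gives $\sum_i l(h_i)\leq C'\sum_i l(g_i)+C''$ for any output tuple $\mathbf{h}$.

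The main obstacle is controlling the sum over $k$ when evaluating $h(\sum a_{\mathbf{g}}\delta_{\mathbf{g}})$ on a single output tuple $\mathbf{h}$. Here the polynomial growth condition is essential: for fixed $\mathbf{g}$ the indices $k$ where $H(f_k,f_{k+1})$ contributes non-trivially correspond to instants where $f_k(g_i)\neq f_{k+1}(g_i)$ for some $i$, so their number is bounded by $\sum_i J(g_i)\leq C\sum_i(l(g_i)+1)^m$. Hence $h(\sum a_{\mathbf{g}}\delta_{\mathbf{g}})(\mathbf{h})$ is a finite sum. The ultrametric inequality $\nu(x+y)\geq\min(\nu(x),\nu(y))$ makes the number of summands irrelevant for the valuation estimate, so the bound $\nu(a_{\mathbf{g}})+1\geq c\sum_i l(g_i)+d$ propagates to $\nu(h(\cdot)(\mathbf{h}))+1\geq c'\sum_i l(h_i)+d'$ with $c'=c/C'$ and $d'$ depending only on $c,d,C',C''$. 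This exhibits $h(S_C)$ as a bounded subset of $\bar{C}_{n+1}(G,\dvr)_{R'}^{\an}$.

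By continuity the identity $[\delta,h]=\id-\iota\circ\alpha$ persists on the completion $\bar{C}_\bullet(G,\dvr)^\dagger$, so $\alpha$ is a chain homotopy equivalence onto $\dvr[0]$. Combined with Lemma \ref{lem:free-complete-dagger}, which identifies each $\bar{C}_n(G,\dvr)^\dagger$ as a free $\dvr[G]^\dagger$-module, this yields the desired free $\dvr[G]^\dagger$-module resolution of $\dvr$. The heart of the argument is the bornological boundedness of $h$, which depends precisely on the interplay between the Lipschitz (metric) and polynomial growth (combinatorial) conditions on the combing.
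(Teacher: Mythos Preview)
Your proof is correct and follows essentially the same route as the paper: construct $h=\sum_k H(f_k,f_{k+1})$ from the combing, verify its boundedness on $\bar{C}_\bullet(G,\dvr)^{\an}$ via the Lipschitz and step conditions, and extend to the completion. Your explicit appeal to the ultrametric inequality is in fact sharper than the paper's closing invocation of polynomial growth---and, as your own argument reveals, the polynomial-growth hypothesis is not actually used in the nonarchimedean setting (finiteness of each $J(g_i)$ is part of the combing axioms, and the ultrametric makes the count of summands irrelevant), so your phrase ``the polynomial growth condition is essential'' should be softened.
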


\begin{proof}
Consider again the map \(H_n(f,g) \colon \bar{C}_n(G,\dvr) \to \bar{C}_{n+1}(G,\dvr)\) from Lemma \ref{lem:trivial-complex} defined by \[H_n(f,g)(x_0,\dotsc,x_n) = \sum_{j=0}^n (-1)^j (f_j^*(x_0,\dotsc,x_j), g_{n-j}^*(x_j,\dotsc,x_n)).\] Since the codomain is non-degenerate simplicies, \(H_n(f,g)(x_0,\dotsc,x_n)\) is only nonzero if \(f(x_j) \neq g(x_j)\) for some \(j\). Since \((f_n)\) is a combing on \(G\), for each \(x_i\), there is a \(j\) such that \(f_n(x_i) = f_{n+1}(x_i)\) for all \(n \geq j\). Using this, we see that on a basis vector \((x_0,\dotsc,x_n) \in C_n(G,\dvr)\), the sum \[H_n(x_0,\dotsc,x_n) \defeq \sum_{j=0}^\infty H_n(f_j,f_{j+1})(x_0,\dotsc,x_n)\] has \(J(x_0)+J(x_1)+\cdots + J(x_n)\) nonzero terms. Furthermore, we have \[\delta \circ H + H \circ \delta = \mathrm{id},\] so that \(H\) is a contracting homotopy for \(\bar{C}_\bullet(G,\dvr)\). 

We need to show that \(H\) extends to a contracting homotopy for \(\bar{C}_\bullet(G,\dvr)^\dagger\). By the universal property of completions, it suffices to show that \(H_n \colon C_n(G,\dvr)^\an \to C_{n+1}(G,\dvr)^\an\) is bounded. The support condition is preserved by \(H_n\) since it is preserved by each \(H_n(f_j,f_{j+1})\), which in turn follows from the uniform closedness of the combing. It remains to check the control condition. Let \[\sum_{(g_0,\dotsc,g_n) \in G^{n+1}} a_{(g_0,\dotsc,g_n)} (g_0,\dotsc,g_n) \in S_c\] be an element in a bounded subset of \(C_n(G,\dvr)_R^\an\), for some \(R>0\) and \(c>0\). Note that only finitely many coefficients in each element of such a sum in \(S_c\) is nonzero. Applying \(H_n\) to such a sum, we get \begin{multline*}
\sum_{(g_0,\dotsc,g_n) \in G^{n+1}} a_{(g_0,\dotsc,g_n)} H_n(g_0,\dotsc,g_n) \\
= \sum_{(g_0,\dotsc, g_n) \in G^{n+1}} a_{(g_0,\dotsc,g_n)} \sum_{i=0}^{J(g_0) + \cdots + J(g_n)} (-1)^i (f_j(g_0),\dotsc,f_j(g_i),f_{j+1}(g_i),\dotsc, f_{j+1}(g_n)) \\
= \sum_{i=0}^{J(g_0) + \cdots + J(g_n)} (-1)^i \sum_{(g_0,\dotsc, g_n) \in G^{n+1}} a_{(g_0,\dotsc,g_n)}(f_j(g_0),\dotsc,f_j(g_i),f_{j+1}(g_i),\dotsc, f_{j+1}(g_n)). 
\end{multline*} It remains to show that for each \(i\), the sum \[(-1)^i \sum_{(g_0,\dotsc, g_n) \in G^{n+1}} a_{(g_0,\dotsc,g_n)}  (f_j(g_0),\dotsc,f_j(g_i),f_{j+1}(g_i),\dotsc, f_{j+1}(g_n))\] has the right growth property. To this end, we note that \(l(f_j(g)) + 1 \leq D (l(g) +1)\) for some \(D>0\) and all \(j \in \N\) and \(g \in G\), which follows from the uniform quasi-Lipschitz condition and the finiteness of the set \(f_j(e)\). This ensures that \[\nu(a_{(g_0,\dotsc,g_n)}) + 1 \geq c (l(g_0) + l(g_1) + \cdots + l(g_n) + 1) \geq  \frac{c}{D}(l(f_j(g_0)) + \cdots + l(f_{j+1}(g_n)) + 1),\] showing that \(H_n\) of each summand of its image on \(S_c\) lies in \(S_{\frac{c}{D}}\). As there are \(J(x_0) + \cdots + J(x_n)\) such summands, the polynomial growth condition now proves the desired result. 
\end{proof}


So far we have shown that for a combable group \(G\) and the free resolution \(\bar{C}_\bullet(G,\dvr) \to \dvr\) of \(\dvr\) by \(\dvr[G]\)-modules, the kernel \(\tilde{\bar{C}}_\bullet(G,\dvr)^\dagger\) of the induced chain complex \(\dvr[G]^\dagger \hot_{\dvr[G]} \bar{C}_\bullet(G,\dvr) \cong \bar{C}_{\bullet}(G,\dvr)^\dagger \to \dvr\) of \(\dvr[G]^\dagger\)-modules is contractible. In other words, \(\bar{C}_\bullet(G,\dvr)^\dagger \to \dvr\) is a resolution of \(\dvr\) by free \(\dvr[G]^\dagger\)-modules. We now use this resolution to arrive at a resolution of \(\dvr[G]^\dagger\) by free \(\dvr[G]^\dagger\)-modules. Note that by tensoring with \(\dvr[G]^\dagger\), we get a resolution \[\bar{C}_\bullet(G,\dvr)^\dagger \hot \dvr[G]^\dagger \to \dvr \otimes \dvr[G]^\dagger \cong \dvr[G]^\dagger\] by free \(\dvr[G]^\dagger\)-bimodules, but the induced  \(\dvr[G]^\dagger\)-bimodule structure on \(\dvr[G]^\dagger\) given by \(f_1 \cdot f_2 \cdot f_3 = f_1(e)f_2f_3\) for \(f_1\), \(f_2 \in \dvr[G]^\dagger\), \(v \in \dvr\) is not the natural one obtained from algebra multiplication. Following \cite{meyer2004embeddings}, we call a convolution algebra such as \(\dvr[G]^\dagger\) is \textit{symmetric} if the operators \(U(\phi)(g,h) = \phi(gh, g)\) and \(U^{-1}\phi(g,h) = \phi(h,h^{-1}g)\) are bounded on \(\dvr[G]^\dagger \hot \dvr[G]^\dagger \cong \dvr[G \times G]^\dagger\). 

\begin{lemma}\label{lem:symmetric-convolution}
The bornological algebra \(\dvr[G]^\dagger\) is symmetric.
\end{lemma}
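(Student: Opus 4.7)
The plan is to unfold the identification $\dvr[G]^\dagger \hot \dvr[G]^\dagger \cong \dvr[G\times G]^\dagger$ explicitly, write $U$ and $U^{-1}$ as reindexing operators on coefficients, and then reduce the boundedness of these operators to a single elementary inequality on the word length of $G$.

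First, I would fix on $G \times G$ the word length $l(g,h) = l(g)+l(h)$ coming from the product generating set $F\times\{e\}\cup\{e\}\times F$. Under the isomorphism $\dvr[G]^\dagger \hot \dvr[G]^\dagger \cong \dvr[G\times G]^\dagger$, the bornology is the one described just before Lemma~\ref{lem:free-complete-dagger}: a subset is bounded iff it is contained in some $S_c = \{\sum a_{(g,h)}\delta_{(g,h)} : \nu(a_{(g,h)}) + 1 \geq c(l(g)+l(h))\}$ for some fixed $c>0$. So it suffices to show that $U$ and $U^{-1}$ carry $S_c$ into $S_{c'}$ for some $c'>0$.

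Second, I would compute the action on delta functions. Setting $\phi = \delta_{(x,y)}$, the relation $U\phi(g,h) = \phi(gh,g)$ forces $g=y$ and $h=y^{-1}x$, so $U\delta_{(x,y)} = \delta_{(y,\,y^{-1}x)}$. Equivalently, the coefficient of $\delta_{(g',h')}$ in $U\phi$ is $a_{(g'h',\,g')}$. The analogous computation gives $U^{-1}\delta_{(x,y)} = \delta_{(xy,\,x)}$, so the coefficient of $\delta_{(g',h')}$ in $U^{-1}\phi$ is $a_{(h',\,h'^{-1}g')}$.

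The key step is the length estimate: by the triangle inequality $l(h') \leq l(g')+l(g'h')$, so
\[
l(g')+l(g'h') \geq \tfrac{1}{2}\bigl(l(g')+l(h')\bigr),
\]
and symmetrically $l(h')+l(h'^{-1}g') \geq \tfrac{1}{2}(l(g')+l(h'))$. Combining with the defining inequality for $S_c$, the coefficients of $U\phi$ and $U^{-1}\phi$ at $(g',h')$ satisfy $\nu + 1 \geq \tfrac{c}{2}(l(g')+l(h'))$, so $U(S_c), U^{-1}(S_c) \subseteq S_{c/2}$. By the universal property of the dagger completion, $U$ and $U^{-1}$ extend to bounded operators on $\dvr[G\times G]^\dagger$, proving that $\dvr[G]^\dagger$ is symmetric.

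There is no serious obstacle here; the statement reduces to triangle-inequality bookkeeping on coefficients. The only point requiring care is keeping the two indexings straight when passing from the formula for $U$ in function notation to its action on the coefficients, and verifying that the constants $c$ can be adjusted uniformly so that the image of a bounded subset lies again in some $S_{c'}$.
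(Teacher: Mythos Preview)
Your proof is correct and follows essentially the same strategy as the paper's: both reduce boundedness of \(U\) and \(U^{-1}\) to the triangle inequality \(l(h) \leq l(g) + l(gh)\) for the word length. The paper routes this through the intermediate estimate \(l(gh) + l(g) \geq \max\{l(g), l(h)\}\) and then invokes the equivalence of the max and sum length functions on \(G \times G\), whereas you go directly to \(l(g') + l(g'h') \geq \tfrac{1}{2}(l(g') + l(h'))\); this is the same content with one fewer step.
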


\begin{proof}
This follows from the estimate that \[\nu(U(\phi(g,h))) + 1 = \nu(\phi(gh,g)) + 1 \geq \frac{l(gh,g)}{m} = \frac{l(gh) + l(g)}{m} \geq \frac{\max \{l(g),l(h)\}}{m}\] for some \(m \geq 1\). Now consider \(d_1((g,h),(g',h')) = l_1(g^{-1}g',h^{-1}h')\), where \(l_1(g,h) = \max \{l(g), l(h)\}\). Setting \((g,h) = (e,e)\), we get \(l_1(g,h) = \max \{l(g), l(h)\}\), which is another length function. The metric \(d_2\) on \(G \times G\) induced by the length function \(l(g,h) = l(g) + l(h)\) is equivalent to \(d_1\), so there is a \(C>0\) such that \(l_1(g,h) = d_1((e,e),(g,h)) \geq C d_2((e,e),(g,h)) = C( l(g) + l(h))\). This shows that \(U\) is bounded. In a similar manner, one can see that \(U^{-1}\) is bounded. 
\end{proof}

\begin{proposition}\label{prop:isocohomological-criterion}
Let \(G\) be a finitely generated discrete group. The canonical map \(\dvr[G] \to \dvr[G]^\dagger\) is an isocohomological embedding if and only if \[\dvr[G]^\dagger \hot_{\dvr[G]}^{\mathbb{L}} \dvr \cong \dvr[G]^\dagger \otimes_{\dvr[G]^\dagger}^{\mathbb{L}} \dvr \cong \dvr.\]
\end{proposition}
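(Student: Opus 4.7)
The strategy is to base the argument on Theorem \ref{thm:isocohomological-equivalent}(\ref{isoco:7}), which recasts the isocohomological property as the bimodule identity $\dvr[G]^\dagger \hot_{\dvr[G]}^{\mathbb{L}} \dvr[G]^\dagger \cong \dvr[G]^\dagger$. The forward direction of the proposition then follows by a formal manipulation of derived tensor products, while the reverse direction requires the concrete group-theoretic input provided by Lemmas \ref{lem:free-complete-dagger} and \ref{lem:symmetric-convolution}. For the forward direction I would apply the functor $- \hot_{\dvr[G]^\dagger}^{\mathbb{L}} \dvr$ to both sides of the bimodule isomorphism, viewing $\dvr$ as the trivial right $\dvr[G]^\dagger$-module via the augmentation. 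The right-hand side becomes $\dvr[G]^\dagger \hot_{\dvr[G]^\dagger}^{\mathbb{L}} \dvr \cong \dvr$, and the left-hand side simplifies by associativity of the derived tensor product as
\[
\dvr[G]^\dagger \hot_{\dvr[G]}^{\mathbb{L}} \dvr[G]^\dagger \hot_{\dvr[G]^\dagger}^{\mathbb{L}} \dvr \cong \dvr[G]^\dagger \hot_{\dvr[G]}^{\mathbb{L}} \dvr,
\]
which gives the desired equivalence.

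For the reverse direction, assume $\dvr[G]^\dagger \hot_{\dvr[G]}^{\mathbb{L}} \dvr \cong \dvr$. Since $\bar{C}_\bullet(G,\dvr) \to \dvr$ is a free $\dvr[G]$-module resolution, Lemma \ref{lem:free-complete-dagger} identifies the derived tensor product with $\bar{C}_\bullet(G,\dvr)^\dagger$, so the hypothesis translates to the statement that the augmented complex $\bar{C}_\bullet(G,\dvr)^\dagger \to \dvr$ is contractible in $\mathsf{CBorn}_\dvr$. Tensoring externally with $\dvr[G]^\dagger$ preserves this contractibility and produces an exact complex of free $\dvr[G]^\dagger$-bimodules
\[
\bar{C}_\bullet(G,\dvr)^\dagger \hot \dvr[G]^\dagger \to \dvr \hot \dvr[G]^\dagger \cong \dvr[G]^\dagger.
\]
However, as pointed out in the paragraph preceding the proposition, the $\dvr[G]^\dagger$-bimodule structure on the target produced by this construction is not the natural one coming from algebra multiplication. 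The symmetric property of Lemma \ref{lem:symmetric-convolution} supplies the remedy: the bounded operator $U$ on $\dvr[G]^\dagger \hot \dvr[G]^\dagger$ intertwines the non-standard bimodule structure (diagonal left action, right action on a single factor) with the natural outer bimodule structure. Extending $U$ term-by-term to the complex above yields a bimodule isomorphism realising it as a free $\dvr[G]^\dagger$-bimodule resolution of $\dvr[G]^\dagger$ with its algebra-multiplication structure, which verifies Theorem \ref{thm:isocohomological-equivalent}(\ref{isoco:3}).

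The hardest part will be the bookkeeping at this final step: one must check that $U$, extended to the higher tensor pieces $\bar{C}_n(G,\dvr)^\dagger \hot \dvr[G]^\dagger$, is not merely an algebraic intertwiner of the two bimodule structures but a bounded bornological isomorphism, so that the rewriting yields a genuine chain equivalence of complexes of complete bornological $\dvr[G]^\dagger$-bimodules. The required boundedness should follow from a direct generalisation of the growth estimates proving Lemma \ref{lem:symmetric-convolution}, applied symmetrically to each additional tensor factor; a related point is to verify that the extension of $U$ is compatible with the differentials of the bar-type complex so that one obtains a bimodule chain map and not only a levelwise isomorphism.
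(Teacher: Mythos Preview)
Your outline is correct, but the route differs from the paper's in a way worth noting. You split the two implications and treat them asymmetrically: the forward direction by a formal $-\hot_{\dvr[G]^\dagger}^{\mathbb{L}}\dvr$ manipulation (which does not use Lemma~\ref{lem:symmetric-convolution} at all), and the reverse direction by building an explicit bimodule resolution $\bar{C}_\bullet(G,\dvr)^\dagger\hot\dvr[G]^\dagger$ and then repairing the bimodule structure by an extension of $U$ to each degree. The paper instead proves a single module-level isomorphism
\[
\dvr[G]^\dagger\hot_{\dvr[G]}^{\mathbb{L}}\dvr[G]^\dagger\;\cong\;(\dvr[G]^\dagger\hot\dvr[G]^\dagger)\hot_{\dvr[G]}^{\mathbb{L}}\dvr\;\overset{U}{\cong}\;\dvr[G]^\dagger\hot\bigl(\dvr[G]^\dagger\hot_{\dvr[G]}^{\mathbb{L}}\dvr\bigr),
\]
coming from the general identity $M\hot_{\dvr[G]}^{\mathbb{L}}N\cong(M\hot N)\hot_{\dvr[G]}^{\mathbb{L}}\dvr$ for the inner conjugation action on $M\hot N$, after which both directions are immediate. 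The advantage of the paper's approach is that $U$ is applied \emph{once}, to the single $\dvr[G]$-module $\dvr[G]^\dagger\hot\dvr[G]^\dagger$, so no extension of $U$ to the higher bar terms is needed and no chain-map compatibility must be checked; this entirely sidesteps the bookkeeping you flag as ``the hardest part.'' Your approach buys a slightly more elementary forward direction, at the cost of the extra verifications in the reverse direction; those verifications are doable (the relevant operators on $\bar{C}_n(G,\dvr)^\dagger\hot\dvr[G]^\dagger$ are given by formulas of the same shape as $U$ and are bounded by the same growth estimates), but they are genuinely additional work that the paper's argument avoids.
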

\begin{proof}
The proof of \cite{meyer2004embeddings}*{Proposition 41} works after making appropriate modifications, using that \(\dvr[G]^\dagger\) is a symmetric convolution algebra by Lemma \ref{lem:symmetric-convolution}. For clarity, we provide the proof. Given any complete bornological modules \(M \in \Mod_{\dvr[G]^\op}\) and \(N \in \Mod_{\dvr[G]}\), equip their completed tensor product \(M \hot N\) with the inner conjugation action \((m \otimes n) \cdot g \defeq (mg \otimes g^{-1} n)\). For the associated right \(\dvr[G]\)-module structure on \(M \hot N\), we then have a bornological isomorphism \(M \hot_{\dvr[G]} N \cong (M \hot N) \hot_{\dvr[G]} \dvr\). This extends to an isomorphism at the level of chain complexes. And, if \(M\) or \(N\) is projective, so is \(M \hot_{\dvr[G]} N\). This implies that for \(M \in \mathsf{D}(\dvr[G]^\op)\) and \(N \in \mathsf{D}(\dvr[G])\), we have \(M \hot_{\dvr[G]}^{\mathbb{L}} N \cong (M \hot N) \hot_{\dvr[G]}^{\mathbb{L}} \dvr\). Applying this in particular to \(M = N = \dvr[G]^\dagger\), we get \[\dvr[G]^\dagger \hot_{\dvr[G]}^{\mathbb{L}} \dvr[G]^\dagger \cong (\dvr[G]^\dagger \hot \dvr[G]^\dagger) \otimes_{\dvr[G]}^{\mathbb{L}} \dvr,\] where \(\dvr[G]^\dagger \hot \dvr[G]\) has the inner conjugation action. But by definition, the bounded (invertible) operator \(U \colon \dvr[G]^\dagger \hot \dvr[G]^\dagger \to \dvr[G]^\dagger \hot \dvr[G]^\dagger\) intertwines the inner conjugation action with the \(G\)-action on \(\dvr[G]^\dagger \hot \dvr[G]^\dagger\) defined by \( \phi \cdot g(x,y) \defeq \phi(x,yg)\) for \(\phi \in \dvr[G \times G]^\dagger\), and \(x\), \(y\), \(g \in G\). Identifying \(\dvr[G]^\dagger \hot \dvr[G]^\dagger\) with this latter \(G\)-action, we obtain an isomorphism \[\dvr[G]^\dagger \hot_{\dvr[G]}^{\mathbb{L}} \dvr[G]^\dagger \cong \dvr[G]^\dagger \hot \dvr[G]^\dagger \hot_{\dvr[G]}^{\mathbb{L}} \dvr,\] which yields the desired result. \qedhere
\end{proof}



\begin{theorem}\label{theorem:main1}
Let \(G\) be a combable group of polynomial growth. Then the canonical map \(\dvr[G] \to \dvr[G]^\dagger\) is an isocohomological embedding.
\end{theorem}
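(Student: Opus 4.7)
The plan is to invoke Proposition \ref{prop:isocohomological-criterion}, which reduces the isocohomological embedding property to verifying the single derived-tensor identity \(\dvr[G]^\dagger \hot_{\dvr[G]}^{\mathbb{L}} \dvr \simeq \dvr\) in \(\mathsf{D}(\dvr[G]^\dagger)\). All the geometric content of the hypotheses on \(G\) has already been packaged into Theorem \ref{main:1}, so what remains is essentially to combine the preparatory results in the right order.

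First, I would exhibit a concrete projective resolution of the trivial module \(\dvr\) over \(\dvr[G]\) to use for the computation of the derived tensor product. The reduced bar complex \(\bar{C}_\bullet(G,\dvr)\) is the natural candidate: each \(\bar{C}_n(G,\dvr)\) is a free \(\dvr[G]\)-module (a basis being the non-degenerate simplices starting at \(1\), via the change of variables \((g_0,\dotsc,g_n) \mapsto g_0 \cdot (1, g_0^{-1}g_1, \dotsc, g_0^{-1}g_n)\)), and by Lemma \ref{lem:trivial-complex} the augmented complex \(\bar{C}_\bullet(G,\dvr) \to \dvr\) is contractible in \(\mathsf{Kom}(\fC)\), hence semi-split exact in the relative exact structure on \(\Mod_{\dvr[G]}\). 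Therefore \(\bar{C}_\bullet(G,\dvr) \to \dvr\) is a projective resolution in the sense of Subsection \ref{subsection:relative-homological-algebra}.

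Next I would compute \(\dvr[G]^\dagger \hot_{\dvr[G]}^{\mathbb{L}} \dvr\) using this resolution. By definition,
\[
\dvr[G]^\dagger \hot_{\dvr[G]}^{\mathbb{L}} \dvr \;\simeq\; \dvr[G]^\dagger \hot_{\dvr[G]} \bar{C}_\bullet(G,\dvr),
\]
and the reduced version of Lemma \ref{lem:free-complete-dagger} identifies the right-hand side with \(\bar{C}_\bullet(G,\dvr)^\dagger\). Theorem \ref{main:1} is precisely the statement that, under the combability and polynomial growth hypotheses, the augmented complex \(\bar{C}_\bullet(G,\dvr)^\dagger \to \dvr\) is chain-homotopy equivalent to \(\dvr[0]\). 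Chaining the three equivalences yields \(\dvr[G]^\dagger \hot_{\dvr[G]}^{\mathbb{L}} \dvr \simeq \dvr\), and Proposition \ref{prop:isocohomological-criterion} then finishes the argument.

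The only non-routine ingredient is the boundedness of the contracting homotopy on the dagger-completed bar complex, but this is exactly the content of Theorem \ref{main:1}, whose proof already exploited the combing together with the polynomial growth bound on \(J(x)\) to control the number of summands appearing after applying \(H_n\). With that result in hand, Theorem \ref{theorem:main1} is essentially a formal corollary; the main expository care is simply to track that the reduced bar complex is projective in the relative sense and that the base change identification of Lemma \ref{lem:free-complete-dagger} lines up with the derived tensor product on the correct side.
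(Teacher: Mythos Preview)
Your proposal is correct and follows essentially the same route as the paper: invoke Proposition \ref{prop:isocohomological-criterion}, use the reduced bar complex as a projective \(\dvr[G]\)-resolution of \(\dvr\), identify its base change along \(\dvr[G] \to \dvr[G]^\dagger\) with \(\bar{C}_\bullet(G,\dvr)^\dagger\) via Lemma \ref{lem:free-complete-dagger}, and then apply Theorem \ref{main:1}. The only cosmetic difference is that the paper phrases the resolution as the kernel \(\tilde{\bar{C}}_\bullet(G,\dvr)\) of the augmentation rather than the augmented complex itself.
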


\begin{proof}
By Proposition \ref{prop:isocohomological-criterion}, we need to show that \(\dvr[G]^\dagger \hot_{\dvr[G]}^{\mathbb{L}} \dvr \simeq \dvr[0]\). The derived tensor product on the left hand side is computed by \[\dvr[G]^\dagger \hot_{\dvr[G]}^{\mathbb{L}} \dvr \cong \dvr[G]^\dagger \hot_{\dvr[G]} P_\bullet,\] where \(P_\bullet \to \dvr\) is a projective \(\dvr[G]\)-module resolution of \(\dvr\). So the map \(\dvr[G] \to \dvr[G]^\dagger\) is isocohomological if and only if \(\dvr[G]^\dagger \hot_{\dvr[G]}^{\mathbb{L}} P_\bullet\) is contractible, with \(P_\bullet\) as above. Choosing \(P_\bullet\) as \(\tilde{\bar{C}}_\bullet(G,\dvr) = \ker(\alpha \colon \bar{C}_\bullet(G,\dvr) \to \dvr)\), we get such a resolution. And by Theorem \ref{main:1}, \(\dvr[G]^\dagger \hot_{\dvr[G]} \tilde{\bar{C}}_\bullet(G,\dvr) \cong \tilde{\bar{C}}_\bullet(G,\dvr)^\dagger\) is contractible, which completes the proof. 
\end{proof}

Next, we look at the nonarchimedean version of the \textit{noncommutative torus}, which is a special case of a twisted monoid algebra defined in \cite{Meyer-Mukherjee:Bornological_tf}*{Section 6}. Actually, we prove a more general result for certain types of crossed product algebras, following \cite{Meyer-Mukherjee:Bornological_tf}*{Section 7}. Let \(B\) be a unital, complete bornological \(\dvr\)-algebra, and \(G\) a finitely generated discrete group acting on \(B\) by bounded algebra homomorphisms \(\alpha \colon G \to \mathsf{End}(B)\).  We define the \textit{crossed product} \(G \rtimes_{\alpha} B\) of \(B\) by the action \(\alpha\) as the \(\dvr\)-module \(\dvr[G] \hot B \cong \dvr[G,B]\) with the tensor product bornology induced by the bornology on \(B\) and the fine bornology on \(\dvr[G]\). Equipped with the multiplication \[(\sum_{g \in G} a_g \delta_g) \cdot (\sum_{h \in G} b_h \delta_h) = \sum_{g,h \in G} a_g \alpha_g(b_h) \delta_{gh},\] we get a unital bornological \(\dvr\)-algebra. 

\begin{theorem}\label{theorem:crossed-product}
Let \(\alpha \colon G \to \mathsf{End}(B)\) be an action of a finitely generated discrete group on a unital, complete bornological \(\dvr\)-algebra. Suppose further that the canonical maps \(B \to B^\dagger\) and \(\dvr[G] \to \dvr[G]^\dagger\) are isocohomological embeddings. Then the canonical map \(G \rtimes_{\alpha} B \to (G \rtimes_{\alpha^\dagger} B^\dagger)^\dagger\) is an isocohomological embedding. 
\end{theorem}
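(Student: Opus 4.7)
The plan is to apply criterion~\ref{isoco:2} of Theorem~\ref{thm:isocohomological-equivalent}: writing $A = G \rtimes_\alpha B$ and $A' = (G \rtimes_{\alpha^\dagger} B^\dagger)^\dagger$, it suffices to exhibit a projective $A$-bimodule resolution $P_\bullet \to A$ such that $A' \hot_A P_\bullet \hot_A A'$ is a projective $A'$-bimodule resolution of $A'$. Since compositions of isocohomological embeddings are isocohomological (immediate from criterion~\ref{isoco:6}), it is natural to factor the map as
\[
G \rtimes_\alpha B \xrightarrow{\ i_1\ } G \rtimes_{\alpha^\dagger} B^\dagger \xrightarrow{\ i_2\ } (G \rtimes_{\alpha^\dagger} B^\dagger)^\dagger
\]
and to check each step separately, so that the two hypotheses can be deployed one at a time.

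For $i_1$, the observation is that $G \rtimes_\alpha B$ is free as a left (and right) $B$-module with basis $\{\delta_g\}_{g \in G}$, so the induction functor $M \mapsto A \hot_B M \hot_B A$ from $B$-bimodules to $A$-bimodules is exact and preserves projectives. Choose a projective $B$-bimodule resolution $Q_\bullet \to B$ whose base change $B^\dagger \hot_B Q_\bullet \hot_B B^\dagger$ is again a resolution of $B^\dagger$; such $Q_\bullet$ exists by the isocohomological hypothesis on $B \to B^\dagger$ via criterion~\ref{isoco:2}. Then $A \hot_B Q_\bullet \hot_B A \to A$ is a projective $A$-bimodule resolution, and tensoring with $G \rtimes_{\alpha^\dagger} B^\dagger$ on both sides rewrites, using freeness over $B$ and $B^\dagger$ respectively, as the induction of $B^\dagger \hot_B Q_\bullet \hot_B B^\dagger$ along $B^\dagger \to G \rtimes_{\alpha^\dagger} B^\dagger$. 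This is a projective $(G \rtimes_{\alpha^\dagger} B^\dagger)$-bimodule resolution of $G \rtimes_{\alpha^\dagger} B^\dagger$, as required.

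For $i_2$, the idea is to adapt the proof of Theorem~\ref{theorem:main1} with coefficients in $B^\dagger$. Concretely, one builds an analogue of the complex $\bar{C}_\bullet(G, \dvr)^\dagger$ with $B^\dagger$-coefficients, namely a suitable completion of functions $G^{n+1} \to B^\dagger$ under a combined growth condition, and verifies that it forms a free bimodule resolution of $(G \rtimes_{\alpha^\dagger} B^\dagger)^\dagger$. The contracting homotopy promotes from $\dvr[G] \to \dvr[G]^\dagger$ via Proposition~\ref{prop:isocohomological-criterion}, transferred $B^\dagger$-linearly by using that the action $\alpha^\dagger$ is by bounded algebra automorphisms and hence preserves the growth bornologies at play.

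The main obstacle is this second step: in order to transport the contracting homotopy from the group-algebra side to the twisted algebra, one needs the analogue of the symmetric convolution algebra estimate (Lemma~\ref{lem:symmetric-convolution}) for $(G \rtimes_{\alpha^\dagger} B^\dagger)^\dagger$. Because the operators $U$ and $U^{-1}$ only permute the $G$-variables without acting on $B^\dagger$-coefficients, one expects that the metric estimates of Lemma~\ref{lem:symmetric-convolution} extend verbatim provided that the bornology on the twisted algebra is set up so that the $G$-action is uniformly bounded. Making this compatibility precise, and verifying that the induced filtration on the bar resolution splits correctly after dagger completion, is where essentially all of the bornological bookkeeping is concentrated.
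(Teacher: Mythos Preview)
Your factorisation through \(G\rtimes_{\alpha^\dagger}B^\dagger\) and your treatment of the first map \(i_1\) match the paper's argument: the paper also uses that \(G\rtimes_\alpha B\) is free (hence projective) as a right \(B\)-module, so that induction \(i_!^B\colon M\mapsto (G\rtimes_\alpha B)\hot_B M\) is exact, and then computes \((G\rtimes_{\alpha^\dagger}B^\dagger)\hot_{G\rtimes_\alpha B}^{\mathbb L}(G\rtimes_{\alpha^\dagger}B^\dagger)\) by collapsing derived tensor products down to \(B^\dagger\hot_B^{\mathbb L}B^\dagger\simeq B^\dagger\).

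Where you diverge from the paper is \(i_2\), and here you are working much harder than necessary. The paper does \emph{not} build a \(B^\dagger\)-coefficient bar complex or redo any bornological estimates. Instead it repeats the formal trick from step one, now along the other algebra map \(\dvr[G]\to G\rtimes_{\alpha^\dagger}B^\dagger\): the crossed product is free as a \(\dvr[G]\)-module, so induction \(i_!^G\) from \(\dvr[G]\)-modules is exact, and one has \(i_!^G(\dvr[G]^\dagger)\cong \dvr[G]^\dagger\hot B^\dagger\cong (G\rtimes_{\alpha^\dagger}B^\dagger)^\dagger\). The same one-line derived-tensor-product manipulation as in step one then reduces the claim to \(\dvr[G]^\dagger\hot_{\dvr[G]}^{\mathbb L}\dvr[G]^\dagger\simeq \dvr[G]^\dagger\), i.e.\ precisely the hypothesis. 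No symmetric-algebra estimate, no coefficient version of Proposition~\ref{prop:isocohomological-criterion}, and no combing enter.

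Your plan for \(i_2\) also has a genuine gap: you invoke ``the contracting homotopy'' as if it came from the isocohomological hypothesis, but the explicit homotopy of Theorem~\ref{main:1} is available only for combable groups, whereas the theorem assumes merely that \(\dvr[G]\to\dvr[G]^\dagger\) is isocohomological. Used abstractly, that hypothesis gives you a quasi-isomorphism of complexes, not a specific \(\dvr\)-linear homotopy you can tensor up \(B^\dagger\)-linearly; so your transfer step, as written, does not go through without additional assumptions. The paper's formal argument sidesteps this entirely.
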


\begin{proof}
It is easy to see that the tensor product of isocohomological embeddings is an isocohomological embedding. Consequently, it suffices to prove that \(G \rtimes_{\alpha} B \to G \rtimes_{\alpha^\dagger} B^\dagger\) and \(G \rtimes_{\alpha^\dagger} B^\dagger \to (G \rtimes_{\alpha^\dagger} B^\dagger)^\dagger\) are isocohomological embeddings. Consider the algebra homomorphism \(B \to G \rtimes_{\alpha} B\), and the induced functor \(i_{!}^B \colon \mathsf{Mod}_B \to \mathsf{Mod}_{G \rtimes_\alpha B}\), \(M \mapsto (G \rtimes_\alpha B) \hot_B M\). Since \(G \rtimes_\alpha B = \dvr[G] \hot B\) is projective as a right \(B\)-module, the functor \(i_{!}^B\) is exact. Consequently, we have \(G \rtimes_{\alpha^\dagger} B^\dagger \cong (G \rtimes_\alpha B) \hot_B B^\dagger \cong (G \rtimes_\alpha B) \hot_{B}^{\mathbb{L}} B^\dagger\). Now consider 
\begin{multline}\label{isocohomological-crossed}
(G \rtimes_{\alpha^\dagger} B^\dagger) \hot_{G \rtimes_{\alpha} B}^{\mathbb{L}} G \rtimes_{\alpha^\dagger} B^\dagger \cong G \rtimes_{\alpha^\dagger} B^\dagger \hot_{G \rtimes_{\alpha} B}^{\mathbb{L}} (G \rtimes B) \hot_{B}^{\mathbb{L}} B^\dagger \\
G \rtimes_{\alpha^\dagger} B^\dagger \hot_{B}^{\mathbb{L}} B^\dagger \cong G \rtimes_{\alpha^\dagger} (B^\dagger \hot_{B}^{\mathbb{L}} B^\dagger \cong G \rtimes_{\alpha^\dagger} B^\dagger,
\end{multline} which shows that \(G \rtimes_\alpha B \to G \rtimes_{\alpha^\dagger} B^\dagger\) is isocohomological.  

Now consider the map \(\dvr[G] \to G \rtimes_{\alpha^\dagger} B^\dagger\), which induces the functor \(i_{!}^G \colon \mathsf{Mod}_{\dvr[G]} \to \Mod_{G \rtimes_{\alpha^\dagger} B^\dagger}\), defined by \(M \mapsto G \rtimes_{\alpha^\dagger} B^\dagger \hot_{B^\dagger} M\). Since \(G \rtimes_{\alpha^\dagger} B^\dagger = \dvr[G] \hot B^\dagger\) is projective as a right \(B^\dagger\)-module, the functor \(i_{!}^G\) is exact. It is easy to see that \[i_{!}^G(\dvr[G]^\dagger) \cong \dvr[G]^\dagger \hot B^\dagger \cong (G \rtimes_{\alpha^\dagger} B^\dagger)^\dagger,\] which with a similar argument as in Equation \ref{isocohomological-crossed} shows that \(G \rtimes_\alpha B^\dagger \to (G \rtimes_{\alpha^\dagger} B^\dagger)^\dagger\) is isocohomological, provided \(\dvr[G] \to \dvr[G]^\dagger\) is one.    
\end{proof}

We now apply Theorem \ref{theorem:crossed-product} to the nonarchimedean noncommutative torus, defined in \cite{Meyer-Mukherjee:Bornological_tf} as follows: let \(\dvr^* = \setgiven{x \in \dvr}{\abs{x} = 1}\), and consider the normalised \(2\)-cocycle \(c \colon \Z^2 \times \Z^2 \to \dvr\) defined by \(c((m_1,n_1),(m_2,n_2)) = \lambda^{n_1 m_2}\), where \(\lambda \in \dvr^*\) is some fixed scalar. We equip the \(\dvr\)-module \(\dvr[\Z^2]\) with the twisted convolution defined by 
\begin{multline*}
\sum_{(m,n) \in \Z^2}x_{(m,n)} \delta_{(m,n)} * \sum_{(m',n') \in \Z^2} y_{(m',n')} \delta_{(m',n')} = \\
 \sum_{(m,n),(m',n') \in \Z^2} x_{(m,n)}y_{(m',n')}c((m,n),(m',n')) \delta_{(m+m',n+n')}.
\end{multline*}

Denote \(\dvr[\Z^2]\) with the twisted convolution by \(\dvr[\Z^2,c]\). The twist in the multiplication does not change the linear growth bornology, so the completion \(\dvr[G,c]^\dagger\) is still given by the same growth conditions as in the definition of the dagger completion of a group algebra.  We call this complete (unital) bornological algebra the \(\dvgen\)-adic noncommutative torus. To justify this, consider \(U_1 \defeq \delta_{(1,0)}\) and \(U_2 = \delta_{(0,1)}\); these elements generate \(\dvr[\Z^2,c]\) as a \(\dvr\)-algebra, and satisfy \[U_2 U_1 = \lambda U_1 U_2.\] The dagger completion \(\dvr[\Z^2,c]^\dagger\) is isomorphic as a \(\dvr\)-module to the dagger completion of the Laurent polynomial algebra \(\dvr[U_1^{\pm 1}, U_2^{\pm 1}]\) in the variables \(U_1\) and \(U_2\) satisfying \(U_2 U_1 = \lambda U_1 U_2\). 

In order to use Theorem \ref{theorem:crossed-product} in the most convenient manner, we first describe the noncommutative torus as a crossed product algebra. Consider the action \(\alpha\) of \(\Z\) on \(\dvr[t,t^{-1}]\) by \(n \cdot t^m \defeq \lambda^n t^m\) for some \(\lambda \in \dvr^*\). Since \(\lambda\) is a unit, the action is \textit{uniformly bounded} in the sense of \cite{Meyer-Mukherjee:Bornological_tf}*{Definition 7.1}. By \cite{Meyer-Mukherjee:Bornological_tf}*{Proposition 7.2}, this action extends to a uniformly bounded action \(\alpha^\dagger\) on the dagger completion \(\dvr[t,t^{-1}]^\dagger\). We then have the following:

\begin{lemma}\label{lem:equivalent-nc-torus}
We have isomorphisms \((\Z \rtimes_{\alpha} \dvr[t,t^{-1}])^\dagger \cong (\Z \rtimes_{\alpha^\dagger} \dvr[t,t^{-1}]^\dagger)^\dagger \cong \dvr[\Z^2,c]^\dagger\) of bornological algebras. 
\end{lemma}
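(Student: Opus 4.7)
The statement is a pair of isomorphisms. For the identification $(\Z \rtimes_{\alpha^\dagger} \dvr[t,t^{-1}]^\dagger)^\dagger \cong \dvr[\Z^2,c]^\dagger$ (granting the first isomorphism), I would verify directly that $\Z \rtimes_\alpha \dvr[t,t^{-1}] \cong \dvr[\Z^2,c]$ as bornological algebras with the fine bornology, via $t^m\delta_n \mapsto \delta_{(m,n)}$. The crossed product multiplication $(t^{m_1}\delta_{n_1})(t^{m_2}\delta_{n_2}) = t^{m_1}\alpha_{n_1}(t^{m_2})\delta_{n_1+n_2} = \lambda^{n_1 m_2}\, t^{m_1+m_2}\delta_{n_1+n_2}$ matches the cocycle relation $\delta_{(m_1,n_1)}\delta_{(m_2,n_2)} = \lambda^{n_1 m_2}\,\delta_{(m_1+m_2,n_1+n_2)}$. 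Functoriality of the dagger completion then yields the claimed isomorphism, reducing everything to establishing the first one.

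For the first isomorphism $(\Z \rtimes_\alpha \dvr[t,t^{-1}])^\dagger \cong (\Z \rtimes_{\alpha^\dagger} \dvr[t,t^{-1}]^\dagger)^\dagger$, the plan is to exhibit both sides as the dagger completion of the same bornological algebra up to canonical isomorphism, by playing off the universal property of the dagger completion functor. The natural inclusion $\dvr[t,t^{-1}] \hookrightarrow \dvr[t,t^{-1}]^\dagger$ is $\Z$-equivariant by the uniqueness in the extension of the action to $\alpha^\dagger$, so it induces a bounded algebra homomorphism $\Z \rtimes_\alpha \dvr[t,t^{-1}] \to \Z \rtimes_{\alpha^\dagger} \dvr[t,t^{-1}]^\dagger$. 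Composing with the dagger unit of the target and invoking the universal property of $(\Z \rtimes_\alpha \dvr[t,t^{-1}])^\dagger$ produces one direction. Conversely, any bounded algebra homomorphism $\varphi \colon \Z \rtimes_{\alpha^\dagger} \dvr[t,t^{-1}]^\dagger \to S$ into a dagger algebra $S$ restricts along $\Z \rtimes_\alpha \dvr[t,t^{-1}] \hookrightarrow \Z \rtimes_{\alpha^\dagger} \dvr[t,t^{-1}]^\dagger$ to a bounded map that factors uniquely through $(\Z \rtimes_\alpha \dvr[t,t^{-1}])^\dagger$; the resulting extension recovers $\varphi$ on the full domain because its further restriction to $\dvr[t,t^{-1}]$ extends uniquely to $\dvr[t,t^{-1}]^\dagger \to S$ by the universal property, and this unique extension must coincide with $\varphi|_{\dvr[t,t^{-1}]^\dagger}$. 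Since $\Z \rtimes_{\alpha^\dagger} \dvr[t,t^{-1}]^\dagger$ is generated bornologically by $\dvr[\Z]$ and $\dvr[t,t^{-1}]^\dagger$ on which the two maps agree, this gives the mutual inverse.

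The main potential obstacle is verifying that the crossed product multiplication on $\Z \rtimes_{\alpha^\dagger} \dvr[t,t^{-1}]^\dagger$ is genuinely bounded, which amounts to uniform boundedness of the extended action $\alpha^\dagger$. This is precisely the content of Proposition 7.2 of \cite{Meyer-Mukherjee:Bornological_tf} for uniformly bounded actions, and we invoke it to legitimize the construction. Once this is in hand, the remainder of the argument is routine bookkeeping with the universal property of the dagger completion functor.
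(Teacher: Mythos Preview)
Your proposal is correct and follows essentially the same route as the paper: the paper dispatches the first isomorphism in a single sentence by invoking ``the universal property of dagger completions'' and reduces the second to the purely algebraic identification $\Z \rtimes_\alpha \dvr[t,t^{-1}] \cong \dvr[\Z^2,c]$, which it declares well-known. You have simply unpacked both steps---writing out the cocycle check explicitly and spelling out the two-sided universal-property argument---so your version is a more detailed rendition of the same proof rather than a different one.
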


\begin{proof}
The first isomorphism follows from the universal property of dagger completions. To show that \((\Z \rtimes_\alpha \dvr[t,t^{-1}])^\dagger \cong \dvr[\Z^2,c]^\dagger\), it suffices to show that \(\Z \rtimes_\alpha \dvr[t,t^{-1}]\) and \(\dvr[\Z^2,c]\) are isomorphic as bornological \(\dvr\)-algebras, since the multiplications are defined by extension to the completion. The latter claim is purely algebraic and is well-known. 
\end{proof}

\begin{theorem}\label{thm:noncommutative-torus-isocohomological}
The canonical map \(\dvr[\Z^2,c] \to \dvr[\Z^2,c]^\dagger\) is an isocohomological embedding. 
\end{theorem}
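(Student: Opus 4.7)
The strategy is to exhibit the noncommutative torus as a crossed product and then invoke the preservation result for isocohomological embeddings under crossed products. Lemma \ref{lem:equivalent-nc-torus} already identifies $\dvr[\Z^2,c]$ with $\Z \rtimes_\alpha \dvr[t,t^{-1}]$ and $\dvr[\Z^2,c]^\dagger$ with $(\Z \rtimes_{\alpha^\dagger} \dvr[t,t^{-1}]^\dagger)^\dagger$, where $\alpha$ is the uniformly bounded action of $\Z$ on $\dvr[t,t^{-1}]$ given by $n\cdot t^m = \lambda^n t^m$. So after transporting the claim along these isomorphisms, the theorem reduces to verifying that the canonical map $\Z \rtimes_\alpha \dvr[t,t^{-1}] \to (\Z \rtimes_{\alpha^\dagger} \dvr[t,t^{-1}]^\dagger)^\dagger$ is an isocohomological embedding.

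To apply Theorem \ref{theorem:crossed-product} to this crossed product, I need to check two inputs: that $\dvr[t,t^{-1}] \to \dvr[t,t^{-1}]^\dagger$ is isocohomological, and that $\dvr[\Z] \to \dvr[\Z]^\dagger$ is isocohomological. Both reduce to the same statement, since $\dvr[t,t^{-1}] \cong \dvr[\Z]$ as bornological algebras (with the fine bornology on the left and the group algebra bornology on the right, which coincide because $\Z$ is finitely generated and the natural identification is an isomorphism on bounded generating subsets). Therefore it suffices to invoke Theorem \ref{theorem:main1} for $G = \Z$, provided $\Z$ is combable with a combing of polynomial growth. This is essentially the same construction as the one given in the excerpt for finitely generated free groups: using the standard generator and its inverse, each $n \in \Z$ has a unique reduced expression $s(i_1)\cdots s(i_{|n|})$, and the truncation combing $f_k(n) = s(i_1)\cdots s(i_k)$ for $k < |n|$, $f_k(n) = n$ for $k \geq |n|$, satisfies $J(n) = |n| = l(n)$, yielding a linear (hence polynomial) growth combing.

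With the hypotheses verified, Theorem \ref{theorem:crossed-product} applied to $B = \dvr[t,t^{-1}]$ and the action $\alpha$ yields that $\Z \rtimes_\alpha \dvr[t,t^{-1}] \to (\Z \rtimes_{\alpha^\dagger} \dvr[t,t^{-1}]^\dagger)^\dagger$ is an isocohomological embedding. Composing with the bornological algebra isomorphisms from Lemma \ref{lem:equivalent-nc-torus} then yields that $\dvr[\Z^2,c] \to \dvr[\Z^2,c]^\dagger$ is an isocohomological embedding, as desired.

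There is no real obstacle here; the entire content of the theorem has been absorbed into the general machinery built in the previous subsections. The only point where care is needed is the bornological identification $\dvr[t,t^{-1}] \cong \dvr[\Z]$ underlying the application of Theorem \ref{theorem:main1} — one should check that the universal dagger completion constructions on the two sides agree, but this is immediate from the fact that both use the same generating bounded subsets and the same uniformiser-adic weighting on the coefficients.
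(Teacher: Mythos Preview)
Your proof is correct and follows essentially the same route as the paper: reduce via Lemma \ref{lem:equivalent-nc-torus} to the crossed-product description, verify the isocohomological hypothesis for $\dvr[\Z] \to \dvr[\Z]^\dagger$ using the linear-growth combing on $\Z$ and Theorem \ref{theorem:main1}, and then apply Theorem \ref{theorem:crossed-product}. You are in fact slightly more careful than the paper in explicitly noting that both hypotheses of Theorem \ref{theorem:crossed-product} (for $B = \dvr[t,t^{-1}]$ and for $\dvr[\Z]$) reduce to the same statement via the identification $\dvr[t,t^{-1}] \cong \dvr[\Z]$.
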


\begin{proof}
By Lemma \ref{lem:equivalent-nc-torus}, we need to show that canonical map \(\Z \rtimes_\alpha \dvr[t,t^{-1}] \to (\Z \rtimes_\alpha \dvr[t,t^{-1}])^\dagger\) is isocohomological. Equpping \(\Z\) with its natural length combing, Theorem \ref{main:1} shows that \(\dvr[\Z] \to \dvr[\Z]^\dagger\) is isocohomological. Consequently, Theorem \ref{theorem:crossed-product} applies to yield the desired result.  
\end{proof}

\subsection{The case of reductive \(p\)-adic groups}\label{subsection:reductive}

In this section, let \(G\) be a reductive \(p\)-adic group, and let \(H\) be a maximal compact open subgroup. This implies that the quotient space \(X \defeq G/H\) is discrete. It turns out that we can construct a projective bimodule \(\dvr[G]^\dagger\)-bimodule resolution of \(\dvr\) in a manner similar to the word-hyperbolic case. There is however a slight technical subtlety arising from the fact that the explicit description of the dagger completion in \cite{Meyer-Mukherjee:Bornological_tf}*{Section 6} only covers finitely generated discrete groups. The general case expresses a group as a colimit \(G = \varinjlim_i G_i\) of finitely generated groups \(G_i\), and the corresponding dagger completion as a colimit \(\dvr[G]^\dagger \cong \varinjlim_i \dvr[G_i]^\dagger\) taken in the category of dagger algebras. We prefer, however, to work with a different dagger algebra that is constructed using a natural length function on a reductive \(p\)-group. Indeed, let \(G\) be a compactly generated, locally compact topological group. Then there is a compact subset \(S\) with \(S = S^{-1}\) such that \(G = \bigcup_{n=0}^\infty S^n\). For simplicity, assume that the identity lies in \(S\). Define \(l \colon G \to \N\) as the minimal \(n \in \N\) such that \(g \in S^n\). Since \(G\) is totally disconnected, it is convenient to assume that \(S\) is \(H\)-biinvariant, which ensures that the length function on \(G\) is \(H\)-biinvariant. Consequently, the length function on \(G\) restricts to a standard word-length function on the space of double-cosets \(G//H\), which we will use to describe the bornology on the completion of the bar complex of \(X = G/H\). 

We now describe the ``dagger completion" of \(\dvr[G]\) as a bornological algebra. First, we equip the algebra \(\dvr[G]\) of finitely supported functions on \(G\) with the bornology generated by the submodules \(\dvr[F_n]\), where \(F_n \defeq \setgiven{g \in G}{l(g) \leq n}\). Denote \(\dvr[G]\) with this bornology by \(\dvr[G]_K\). Note that the fine bornology does not work here as \(F_1 = S\) is an infinite set, so restricting to finitely generated submodules does not yield a bornological algebra that takes the topology of \(G\) into account in any meaningful way. There is, however, a canonical bounded \(\dvr\)-algebra homomorphism \(\dvr[G] \to \dvr[G]_K\), since any finitely generated \(\dvr\)-submodule of \(\dvr[G]\) is contained in \(\dvr[F_n]\) for some \(n\). 

\begin{lemma}\label{lem:new-borno-tf}
The bornological \(\dvr\)-module \(\dvr[G]_K\) is bornologically torsionfree. 
\end{lemma}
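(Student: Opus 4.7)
The plan is to unwind the definition: a complete bornological \(\dvr\)-module \(M\) is \emph{bornologically torsionfree} when the canonical map \(M \to M \otimes_\dvr \dvf\) is a bornological embedding. Given that \(M\) is algebraically torsionfree, this is equivalent to the condition that multiplication by \(\dvgen\) is a bornological embedding on \(M\), i.e.\ a subset \(S \subseteq M\) is bounded whenever \(\dvgen S\) is bounded. So my plan is to verify this latter condition for \(M = \dvr[G]_K\).

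First I would dispense with algebraic torsion-freeness: as a bare \(\dvr\)-module, \(\dvr[G]_K\) is \(\dvr[G]\), which is a free \(\dvr\)-module on the set \(G\), hence torsionfree over the domain \(\dvr\). Next, I would make the bornology on \(\dvr[G]_K\) concrete. By construction it is generated by the \(\dvr\)-submodules \(\dvr[F_n]\) with \(F_n = \{g \in G : l(g) \leq n\}\). Since \(\dvr[F_n] \subseteq \dvr[F_{n+1}]\) form an increasing filtered family of submodules, finite unions are absorbed into a single \(\dvr[F_m]\), so a subset \(S \subseteq \dvr[G]_K\) is bounded if and only if \(S \subseteq \dvr[F_n]\) for some \(n \in \N\).

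With this description in hand, the verification is immediate. Suppose \(S \subseteq \dvr[G]_K\) satisfies \(\dvgen S \subseteq \dvr[F_n]\) for some \(n\). For any \(x \in S\) and \(g \in G \setminus F_n\), we have \(\dvgen \cdot x(g) = (\dvgen x)(g) = 0\); since \(\dvr\) has no zero divisors this forces \(x(g) = 0\), so \(x\) is supported in \(F_n\) and \(S \subseteq \dvr[F_n]\) is bounded. Therefore multiplication by \(\dvgen\) is a bornological embedding and \(\dvr[G]_K\) is bornologically torsionfree.

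There is no real obstacle here: the only conceptual point is to record that the bornology generated by an increasing filtered family of submodules is exactly the family of subsets contained in one member, and that the support of a finitely supported \(\dvr\)-valued function is unchanged by multiplication by the uniformiser \(\dvgen\). The statement should be viewed as a sanity check confirming that, even though the fine bornology was replaced by a coarser one indexed by the length function \(l\), the new bornology still interacts correctly with the torsion structure of \(\dvr\).
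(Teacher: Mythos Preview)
Your proof is correct and follows essentially the same route as the paper: both arguments reduce to showing that if \(\dvgen S \subseteq \dvr[F_n]\) then \(S \subseteq \dvr[F_n]\), using that \(\dvr\) is an integral domain so multiplication by \(\dvgen\) does not change supports. Your version is in fact a bit cleaner in spelling out why the generated bornology consists exactly of subsets contained in some \(\dvr[F_n]\).
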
 
\begin{proof}
Suppose \(\dvgen U \subseteq \dvr[G]_K\) is bounded for some subset \(U \subseteq \dvr[G]\), then there is an \(n \in \N\) such that \(\dvgen U \subseteq \dvr[F_n]\). Now \(\dvgen^{-1}\dvr[F_n] = \setgiven{\delta_g \in \dvr[G]}{\dvgen \delta_g \in \dvr[F_n]}\) is bounded as it can be identified with a submodule of \(\dvr[F_n]\), which follows from the fact that multiplication by \(\dvgen\) on \(\dvr[G]\) is injective. The claim now follows from the observation that \(U \subseteq \dvgen^{-1}\dvr[F_n]\).
\end{proof} 

The linear growth bornology on this bornological algebra can now be defined as before as the bornology generated by submodules of the form \[M_n = \sum_{j=0}^\infty \dvgen^j \dvr[F_n]^{j+1},\] so that \(\ling{\dvr[G]} = \varinjlim_{n \in \N} M_n\). Completing in this bornology yields \[\dvr[G]_K^\dagger = \setgiven{\sum_{g \in G} x_g \delta_g}{\nu(x_g) + 1 \geq c\cdot l(g), c>0},\] with the bornology where a subset \(S_c\) of such power series is bounded if and only if the coefficients satisfy the growth condition above for the same constant \(c>0\). The remainder of this section will be dedicated to showing the following:

\begin{theorem}\label{main:isoco-p-adic}
The canonical map \(\dvr[G] \to \dvr[G]_K^\dagger\) is an isocohomological embedding.
\end{theorem}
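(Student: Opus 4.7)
The plan is to mirror the proof of Theorem \ref{theorem:main1}, with the discrete $G$-space $X \defeq G/H$ playing the role of $G$ and the combing being extracted from the affine Bruhat--Tits building of $G$. First, I would check that Proposition \ref{prop:isocohomological-criterion} applies in the present setting. The bi-$H$-invariant length $l$ on $G$ descends to $X$ and satisfies the same metric estimates as the word-length of a discrete group, so the proof of Lemma \ref{lem:symmetric-convolution} carries over to show that $\dvr[G]_K^\dagger$ is symmetric. The task therefore reduces to showing $\dvr[G]_K^\dagger \hot_{\dvr[G]}^{\mathbb{L}} \dvr \simeq \dvr$.

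For the required resolution, I would take $P_\bullet \defeq \bar{C}_\bullet(X, \dvr)$, the reduced bar complex on $X$ with diagonal $G$-action. Lemma \ref{lem:trivial-complex} shows it is a semi-split $\dvr$-linear resolution of $\dvr$; since $H$ is compact open in $G$ and $l$ is bi-$H$-invariant, a choice of short-length coset representatives $X \to G$ provides bounded $\dvr$-linear sections exhibiting each $\bar{C}_n(X, \dvr)$ as a retract of a free $\dvr[G]$-module, hence semi-split projective. Defining the completion $\bar{C}_n(X, \dvr)^\dagger$ via the same support condition $d(x_i, x_j) \leq R$ and coefficient-growth condition $\nu(a_{(x_0, \dotsc, x_n)}) + 1 \geq c(l(x_0) + \cdots + l(x_n))$ as in Section \ref{subsec:resolutions-trivial}, the argument of Lemma \ref{lem:free-complete-dagger} yields the base-change isomorphism $\dvr[G]_K^\dagger \hot_{\dvr[G]} \bar{C}_n(X, \dvr) \cong \bar{C}_n(X, \dvr)^\dagger$.

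The last step is to extend the combinatorial contracting homotopy of Lemma \ref{lem:trivial-complex} to a bounded homotopy on $\bar{C}_\bullet(X, \dvr)^\dagger$. For this, I would construct a combing of polynomial growth on $X$ from the geometry of the affine Bruhat--Tits building: retracting the building onto a fixed apartment and composing with the natural linear-growth combing of the virtually abelian affine Weyl group should produce a combing of $X$ whose growth function $J(x)$ is dominated by a polynomial in $l(x)$. The estimate in the proof of Theorem \ref{main:1} then shows the induced homotopy is bounded on each basic bounded set $S_c$, and Proposition \ref{prop:isocohomological-criterion} concludes.

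The hard part will be the polynomial-combing construction on $X = G/H$: this is not formal and draws on the structure theory of Bruhat--Tits buildings, adapting the archimedean arguments of Meyer \cite{Meyer:Combable} and Solleveld \cite{solleveld2009periodic} to the present bornological non-archimedean setting. A secondary technicality is to check that the bounded-section argument for semi-split projectivity of $\bar{C}_n(X, \dvr)$ survives the transition from the fine bornology of the discrete case to the coarser bornology on $\dvr[G]_K$, and that the completion step preserves the base-change identification when the input bornology is generated by the length sublevel sets rather than by finitely generated submodules.
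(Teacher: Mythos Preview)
Your proposal is correct and follows essentially the same route as the paper: reduce to $\dvr[G]_K^\dagger \hot_{\dvr[G]}^{\mathbb{L}} \dvr \simeq \dvr$ via symmetry and Proposition~\ref{prop:isocohomological-criterion}, take the reduced bar complex on $X=G/H$, identify its base change with a completed complex (Lemma~\ref{lem:base-change-free}), and contract the latter using a polynomial-growth combing coming from the Bruhat--Tits building (Proposition~\ref{prop:combing-p-adic}, Theorem~\ref{thm:main:2}). The only cosmetic differences are that the paper phrases the support condition via ``controlled by a finite subset $F\subseteq X$'' rather than via metric balls, and constructs the combing by the CAT(0) geodesic retraction of the building to a basepoint followed by projection to the orbit $G\xi_0$, rather than by retraction onto an apartment.
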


To prove Theorem \ref{main:isoco-p-adic}, we use the same strategy as before, constructing a resolution of \(\dvr\) by \(\dvr[G]\)-modules, and using the extending the resulting contracting homotopy to a suitable completion of the bar complex.  Let \(X_n \defeq X^{n+1}\) and \(C_n(X,\dvr)\) be the \(\dvr\)-module spanned by finitely supported functions \(f \colon X_n \to \dvr\) on \(X_n\). Then \(C_n(X,\dvr)\) with the same differential \(\delta_n \colon C_n(X,\dvr) \to C_{n-1}(X,\dvr)\) as for the bar complex on \(X\) in Section \ref{subsec:resolutions-trivial}, with diagonal action of \(G\) yields a projective \(\dvr[G]\)-module resolution of \(\dvr\), where the latter is equipped with the trivial \(G\)-action. Likewise, we can define a reduced bar complex \(\bar{C}_\bullet(X,\dvr)\) on \(X\), wherein the functions \(f \colon X_n \to \dvr\) vanish on degenerate simplices. To construct a resolution of \(\dvr\) by projective \(\dvr[G]_K^\dagger\)-modules, we need some modifications from the discrete case to define the support condition.

\begin{definition}\label{def:control-support}
Given a finite subset \(F \subseteq X\), we define a relation \( \sim_F\) on \(X\) by \[x \sim_F y \text{ if and only if } x^{-1} y \in HFH,\] where \(x^{-1} y\) is viewed as an element of the double coset space \(G//H\). A subset \(S \subseteq X_n\) is \textit{controlled by a finite set \(F\)} if \(x_i \sim_F x_j\) for all \((x_0,\dotsc,x_n) \in S\) and all \(i,j \in \{0,\dotsc, n\}\). We call \(S \subseteq X_n\) \textit{controlled} if it is controlled by some finite set.  
\end{definition}

An equivalent characterisation of Definition \ref{def:control-support} is that a subset \(S \subseteq X_n\) is controlled if and only if there is a finite subset \(F \subseteq X\) such that \(S \subseteq G \cdot F\).

Now consider the \(\dvr\)-module \(C_n(X,\dvr)_F^\an\) spanned by functions \(f\colon X_n \to \dvr\) whose supports are controlled by a fixed finite set \(F\), satisfying \(\nu(f(gx_0,\dotsc,gx_n)) + 1 \geq cl(g)\) for all \(g \in G\) and each \((x_0,\dotsc,x_n) \in X_n\), and some \(c>0\). We equip \(C_n(X,\dvr)_F^\an\) with the bornology where a subset \(T_c \subseteq C_n(X,\dvr)_F^\an\) is bounded if and only if its constituent functions \(f \colon X_n \to \dvr\) satisfy the growth condition \(\nu(f(gx_0,\dotsc,gx_n)) + 1 \geq cl(g)\) for the same \(c>0\). Taking the inductive limit, we get a bornological \(\dvr\)-module \(C_n(X,\dvr)^\an = \varinjlim_{F \subseteq X} C_n(X,\dvr)_F^\an\). Its completion in the inductive limit bornology yields a complete bornological \(\dvr\)-module, which we denote by \(C_n(X,\dvr)^\dagger\) for each \(n \in \N\).

\begin{lemma}\label{lem:base-change-free}
We have an isomorphism \(\dvr[G]_K^\dagger \hot_{\dvr[G]} C_\bullet(X,\dvr) \cong C_\bullet(X,\dvr)^\dagger\) of chain complexes of bornological \(\dvr[G]_K^\dagger\)-modules. 
\end{lemma}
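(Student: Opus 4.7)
The approach mirrors the proof of Lemma \ref{lem:free-complete-dagger}, now accounting for the non-trivial compact open stabilizer $H$ of the basepoint $eH \in X$. The diagonal $G$-action on $X^{n+1}$ extends, by the very design of the bornology on $C_n(X,\dvr)^\dagger$, to a bounded action of $\dvr[G]_K^\dagger$, and the inclusion $C_n(X,\dvr) \hookrightarrow C_n(X,\dvr)^\dagger$ is $\dvr[G]$-equivariant and bounded. By the universal property of the base change we therefore obtain a bounded $\dvr[G]_K^\dagger$-equivariant map
\[
\Psi_n \colon \dvr[G]_K^\dagger \hot_{\dvr[G]} C_n(X,\dvr) \to C_n(X,\dvr)^\dagger.
\]
Since $\Psi_n$ is obtained by base change from the chain map $C_\bullet(X,\dvr) \hookrightarrow C_\bullet(X,\dvr)^\dagger$, it automatically commutes with the Hochschild differentials, and it suffices to prove that each $\Psi_n$ is a bornological isomorphism.

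Fix a finite $F \subseteq X$ containing $eH$ and set $Y_F \defeq \{(x_0,\dotsc,x_n) \in X^{n+1} : x_i^{-1} x_j \in HFH \text{ for all } i,j\}$ and $Y_F' \defeq \{(y_1,\dotsc,y_n) \in X^n : (eH, y_1, \dotsc, y_n) \in Y_F\}$. Each $G$-orbit in $Y_F$ meets the slice $\{eH\} \times Y_F'$ in a single $H$-orbit under the diagonal $H$-action on $Y_F'$. This yields an algebraic decomposition
\[
\dvr[Y_F] \cong \dvr[G] \otimes_{\dvr[H]} \dvr[Y_F']
\]
of $\dvr[G]$-modules, under which the map $\delta_g \otimes \delta_{(y_1,\dotsc,y_n)} \mapsto \delta_{(gH, gy_1, \dotsc, gy_n)}$ identifies the algebraic predecessor of $C_n(X,\dvr)_F^\an$ with $\ling{\dvr[G]_K} \otimes_{\dvr[H]} \dvr[Y_F']$. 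The core technical point is that the bornologies match across this identification: the linear-growth bornology on $\ling{\dvr[G]_K}$ corresponds precisely to the defining growth condition of $C_n(X,\dvr)_F^\an$. This uses the $H$-biinvariance of the length function, which ensures that $l(g)$ depends only on the coset $gH \in X$, together with the finiteness of $F$, which makes $HFH$ a finite union of $H$-double cosets and hence of bounded length.

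Taking the completion and then passing to the inductive limit over the directed system of finite $F \subseteq X$ yields
\[
\dvr[G]_K^\dagger \hot_{\dvr[H]} \dvr[X^n] \cong C_n(X,\dvr)^\dagger,
\]
and combining with the algebraic identification $C_n(X,\dvr) \cong \dvr[G] \otimes_{\dvr[H]} \dvr[X^n]$ recovers $\dvr[G]_K^\dagger \hot_{\dvr[G]} C_n(X,\dvr)$ on the left. The reduced version is handled identically, restricting attention throughout to non-degenerate simplices.

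The main obstacle lies in the second step: unlike the discrete case of Lemma \ref{lem:free-complete-dagger}, where $G$ acts freely on itself and gives a clean free-module decomposition, here the $H$-coinvariants force a careful compatibility analysis of the bornologies across a tensor product over $\dvr[H]$. In particular, one must verify that a set-theoretic section $\sigma \colon X \to G$ of the projection $G \onto X$ (which is in general merely bounded rather than equivariant) does not disturb the matching of growth conditions; this is precisely where the $H$-biinvariance of the length function and the compactness of $H$ become essential.
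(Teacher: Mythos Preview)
Your argument is correct and follows the same strategy as the paper: decompose the controlled bar complex as an induced module from the slice through the basepoint, match the linear-growth bornology with the defining growth condition on $C_n(X,\dvr)_F^{\an}$, and pass to the completion and inductive limit. The one substantive difference is that you tensor over $\dvr[H]$ rather than over $\dvr$; the paper writes $\dvr[G] \otimes C_n'(X,\dvr)_F \cong C_n(X,\dvr)_F$, but this map is only surjective when $H$ is infinite (already for $n=0$ it is the quotient $\dvr[G]\twoheadrightarrow \dvr[X]$), so your formulation with $\otimes_{\dvr[H]}$ is the precise one and your remark about the role of $H$-biinvariance of the length is exactly what is needed to make the bornological matching go through.
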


\begin{proof}
For a finite subset \(F \subseteq X\), let \(C_n'(X,\dvr)_F\) denote the finitely generated submodule spanned by \(([e], x_1, \dotsc, x_n)\), such that \(x_i \sim_F x_j\) and \([e] \sim_F x_i\) for all \(i\), \(j \in \{1,\dotsc, n\}\). We then have an isomorphism of \(\dvr\)-modules given by \[\dvr[G] \otimes C_n'(X,\dvr)_F \cong C_n(X,\dvr)_F, \quad g \otimes ([e],x_1,\dotsc,x_n) \mapsto ([g], gx_1,\dotsc, gx_n),\] using the \(G\)-invariance of the relations on \(X\). Equipping \(\dvr[G]\) with the linear growth bornology relative to the bornology generated by the \(\dvr[F_n]\), we get an induced isomorphism of bornological \(\dvr\)-modules \[\ling{\dvr[G]} \otimes C_n'(X,\dvr)_F^\an \to C_n(X,\dvr)_F^\an.\]  Now taking inductive limits over the finite subsets \(F\), we get an isomorphism  \[\ling{\dvr[G]} \otimes C_n'(X,\dvr) \cong C_n(X,\dvr)^\an\] of bornological \(\dvr\)-modules. Equipping the left hand side with the standard module \(\ling{\dvr[G]}\)-structure given by \(f \cdot (h \otimes k) \defeq (f \star h) \otimes k\), we promote the above to a module homomorphism, which then extends to the completion. This exhibits \(\dvr[G]^\dagger \hot C_\bullet(X,\dvr)\) as a free \(\dvr[G]^\dagger\)-module. It now follows as in the discrete case that \(\dvr[G]^\dagger \hot_{\dvr[G]} C_\bullet(X,\dvr) \cong C_\bullet(X,\dvr)^\dagger\), which also yields an (on-the-nose) isomorphism of chain complexes. 
\end{proof}

Now since \(\dvr\) is a dagger algebra with the fine bornology, the trivial representation of \(G\) on \(\dvr\) extends to a \(\dvr[G]_K^\dagger\)-module structure on \(\dvr\). Furthermore, similar to the proof of \ref{lem:symmetric-convolution}, one can check that the algebra \(\dvr[G]_K^\dagger\) is symmetric. The augmentation map \[\alpha \colon C_0(X,\dvr) \to \dvr, \quad \phi \mapsto \sum_{x \in X} \phi(x)\] yields a chain complex \(C_\bullet(G,\dvr)^\dagger \to \dvr\) of \(\dvr[G]^\dagger\)-modules. To conclude that it yields a resolution, we require the following notion of a combing of polynomial growth:

\begin{definition}\label{def:polynomial-growth-reductive}
A sequence \((p_k)\) of functions on \(X\) is called a \textit{combing} of \(X\) if it has the following properties:

\begin{itemize}
\item there is a finite subset \(F \subseteq X\) such that \(p_k(x) \sim_F p_{k+1}(x)\) for all \(k \in \N\), \(x \in X\);
\item for any finite subset \(F \subseteq X\), there is a finite subset \(\tilde{F}\subseteq X\) such that \(p_k(x) \sim_{\tilde{F}} p_k(y)\) for all \(k \in \N\) and \(x,y \in X\) with \(x \sim_F y\). 
\end{itemize}

We say that a combing has \textit{polynomial growth} with respect to a scale \(\sigma\) if the least \(k_0\) such that \(p_k(gH) = gH\) for all \(k \geq k_0\) grows at most polynomially in \(\sigma(g)\). 
\end{definition}

The following is well-known, and we only provide enough detail to keep the article self-contained. 

\begin{proposition}\label{prop:combing-p-adic}
Let \(G\) be a reductive \(p\)-adic group. Then \(X \defeq G/H\) has a combing of polynomial growth. 
\end{proposition}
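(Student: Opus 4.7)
The plan is to build the combing out of geodesics in the affine Bruhat--Tits building \(\Delta\) of \(G\). After replacing \(H\) by a conjugate if needed, one may assume \(H = \mathrm{Stab}_G(v_0)\) for a special vertex \(v_0 \in \Delta\), so that \(X = G/H\) is identified with the \(G\)-orbit of \(v_0\) in the vertex set of \(\Delta\). The building \(\Delta\) is a CAT(0) polyhedral complex on which \(G\) acts properly and cocompactly by simplicial automorphisms. For each \(v = gv_0 \in X\), let \(\gamma_v \colon [0,\infty) \to \Delta\) be the unique unit-speed geodesic from \(v_0\) to \(v\), extended by \(\gamma_v(t) = v\) for \(t \geq d_\Delta(v_0,v)\). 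Define \(p_k(v)\) to be a vertex of \(X\) at minimal \(\Delta\)-distance from \(\gamma_v(k)\), with ties broken by a \(G\)-equivariant rule on a fundamental domain; note that \(p_k(v) = v\) once \(k \geq d_\Delta(v_0,v)\).

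Next I would verify the two combing axioms. For the first axiom, the consecutive points \(\gamma_v(k)\) and \(\gamma_v(k+1)\) are at \(\Delta\)-distance at most \(1\), so \(p_k(v)\) and \(p_{k+1}(v)\) lie in a ball of uniformly bounded radius \(D\) of each other. By cocompactness of the \(G\)-action, the set of \(g \in G\) with \(d_\Delta(v_0, gv_0) \leq D\) is a finite union of double cosets \(HFH\) for a finite subset \(F \subseteq X\), and so \(p_k(v) \sim_F p_{k+1}(v)\) uniformly in \(v\) and \(k\). For the second axiom, CAT(0) convexity bounds \(d_\Delta(\gamma_x(k), \gamma_y(k))\) linearly in \(d_\Delta(x,y)\) (using also the triangle inequality \(|d_\Delta(v_0,x) - d_\Delta(v_0,y)| \leq d_\Delta(x,y)\)); when \(x \sim_{F'} y\), this distance is uniformly bounded, so the same cocompactness argument produces a finite \(\tilde F\) with \(p_k(x) \sim_{\tilde F} p_k(y)\) for all \(k\).

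For polynomial growth, I would invoke the \v{S}varc--Milnor lemma: since \(G\) acts properly and cocompactly on the geodesic metric space \(\Delta\), the word-length \(l\) on \(G\) with respect to the compact generating set \(S\) is quasi-isometric to the orbit function \(g \mapsto d_\Delta(v_0, gv_0)\), and this descends to a quasi-isometry between a length function \(\sigma\) on \(G/H\) and the restriction of \(d_\Delta(v_0, -)\) to \(X\). Since the combing \((p_k)\) stabilizes at \(v = gH\) once \(k \geq d_\Delta(v_0, v)\), the stabilization time is bounded by \(C\cdot \sigma(g) + C\), which is polynomial (indeed linear) growth with respect to \(\sigma\).

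The main obstacle is not any single deep step but rather marrying the continuous CAT(0) geometry of the building with the purely combinatorial framework of Definition \ref{def:polynomial-growth-reductive}: the values \(p_k(v)\) must be vertices in \(X\), the relations \(\sim_F\) are defined via double cosets in \(G//H\), and the growth condition is measured against a length function on \(G\) coming from the compact generating set. Translating the metric estimates for CAT(0) geodesics into uniform control by a single finite set \(F\) hinges on the cocompactness of the \(G\)-action on pairs of vertices at bounded \(\Delta\)-distance, which is a standard feature of affine Bruhat--Tits buildings but deserves to be spelled out carefully.
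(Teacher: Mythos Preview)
Your proposal is correct and follows essentially the same route as the paper: both proofs use unit-speed geodesics to the base point in the Bruhat--Tits building, project the sampled points \(\gamma_v(k)\) to nearby orbit points via cocompactness, and deduce linear growth of the stabilisation time from a \v{S}varc--Milnor-type comparison (the paper cites \cite{meyer2006homological}*{Lemma 23} for this step). Your write-up supplies more detail on verifying the two combing axioms via CAT(0) convexity and the dictionary between bounded \(\Delta\)-distance and the relations \(\sim_F\), whereas the paper leaves these verifications implicit; the remark about a \(G\)-equivariant tie-breaking rule is harmless but unnecessary, since Definition~\ref{def:polynomial-growth-reductive} imposes no equivariance on the \(p_k\).
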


\begin{proof}
Let \(\mathcal{BT}\) be the affine Bruhat-Tits building on \(G\); the action of \(G\) on \(\mathcal{BT}\) is isometric, proper and cocompact. Consider \(\xi_0 \in \mathcal{BT}\), \(H \defeq \mathsf{Stab}(\xi_0)\) and \(X = G/H\). Then \(H\) is a compact open subgroup, and we identify \(X\) with the discrete subset \(G \xi_0 \subseteq \mathcal{BT}\). To construct a combing on \(X\), we first construct a combing on \(\mathcal{BT}\). For \(\xi \in  \mathcal{BT}\), consider the unit speed geodesic map \[p(\xi) \colon [0,d(\xi,\xi_0)] \to \mathcal{BT}, \quad t \mapsto p_t(\xi),\] which is extended to all of \(\mathcal{BT}\) by setting \(p_t(\xi) \defeq \xi\) for all \(t> d(\xi, \xi_0)\). Such a function exists as \(\mathcal{BT}\) is a metric space of non-positive curvature. Varying the \(t\) over \(\N\), we obtain functions \(p_k \colon \mathcal{BT} \to \mathcal{BT}\). These have linear growth by \cite{meyer2006homological}*{Lemma 23}. Now using that \(G\) acts cocompactly on \(\mathcal{BT}\), there is an \(R>0\) such that for any \(\xi \in \mathcal{BT}\), there is a \(\xi' \in G \xi_0\) such that \(d(\xi,\xi')< R\). Let \(p_k'(\xi)\) for \(\xi \in \mathcal{BT}\) be a point in \(G \xi_0\) such that \(d(p_k(\xi), p_k(\xi'))<R\).  The restriction of these maps to \(G \xi_0 \subseteq \mathcal{BT}\) yields the required combing on \(X\).       
\end{proof}

With this choice of \(H\) and \(X\) in the definitions of the bar complex, we now have the analogue of Theorem \ref{main:1} for reductive \(p\)-adic groups:

\begin{theorem}\label{thm:main:2}
Let \(G\) be a reductive \(p\)-adic group. Then the subcomplex \(\tilde{\overline{C}}_\bullet(X,\dvr)^\dagger\) is contractible, so that \(\overline{C}_\bullet(X, \dvr)^\dagger\) is a resolution of \(\dvr\) by \(\dvr[G]^\dagger\)-modules. 
\end{theorem}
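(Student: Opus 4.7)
The plan is to mirror the proof of Theorem \ref{main:1}, replacing the word-length combing of a finitely generated discrete group by the combing \((p_k)\) on \(X = G/H\) provided by Proposition \ref{prop:combing-p-adic}. For any two maps \(f,g \colon X \to X\), Lemma \ref{lem:trivial-complex} gives a prism operator
\[ H_n(f,g) \colon \overline{C}_n(X,\dvr) \to \overline{C}_{n+1}(X,\dvr), \]
and I take the telescoping sum \(H_n \defeq \sum_{j=0}^\infty H_n(p_j, p_{j+1})\). This sum is well-defined on every basis simplex since \(p_k\) is eventually constant in \(k\), and the same algebraic calculation as in Theorem \ref{main:1} yields \([\delta, H] = \mathrm{id}\); hence at the uncompleted level \(\tilde{\overline{C}}_\bullet(X,\dvr)\) is already contractible.

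The content of the theorem is therefore to extend \(H_n\) to a bounded operator on the completion \(\overline{C}_n(X,\dvr)^\dagger\). By the universal property of completions, it suffices to verify that \(H_n\) sends \(\overline{C}_n(X,\dvr)^\an\) to \(\overline{C}_{n+1}(X,\dvr)^\an\) boundedly for the defining bornologies. Two conditions must be preserved: the support-control condition by a finite subset of \(X\) and the linear-growth condition on the coefficients. The support condition follows from the second axiom of Definition \ref{def:polynomial-growth-reductive} (if \(x \sim_F y\) then \(p_k(x) \sim_{\tilde F} p_k(y)\) uniformly in \(k\)) together with the first axiom (\(p_k(x) \sim_{F'} p_{k+1}(x)\)); these combine to show that \(H_n\) sends a set controlled by \(F\) into a set controlled by some finite \(F''\) depending only on \(F\).

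The linear-growth condition is the main obstacle and is where the polynomial-growth hypothesis on the combing is essential. The key estimate, analogous to the uniform quasi-Lipschitz bound \(l(f_j(g)) + 1 \leq D(l(g) + 1)\) used in the proof of Theorem \ref{main:1}, is a uniform control \(l(p_k(gH)) + 1 \leq D(l(g) + 1)\) for all \(k\) and \(g \in G\); this should follow from the isometric, cocompact action of \(G\) on the Bruhat--Tits building together with a comparison between the scale \(\sigma\) controlling the combing and the word length \(l\) defining the bornology on \(\dvr[G]_K^\dagger\). Granted this bound, applying \(H_n\) to a basis element in a bounded subset \(T_c \subseteq \overline{C}_n(X,\dvr)_F^\an\) produces at most polynomially many (in \(l(g)\)) nonzero summands, each lying in a set of the form \(T_{c/D}\); since the linear-growth bornology absorbs polynomial prefactors, boundedness follows. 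With the contraction in place, the base-change identification of Lemma \ref{lem:base-change-free} realises \(\overline{C}_\bullet(X, \dvr)^\dagger \to \dvr\) as a resolution of \(\dvr\) by free \(\dvr[G]_K^\dagger\)-modules.
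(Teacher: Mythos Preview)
Your proposal is correct and follows essentially the same approach as the paper: both use the telescoping sum of prism operators \(H_n = \sum_j H_n(p_j, p_{j+1})\) built from the combing of Proposition~\ref{prop:combing-p-adic}, verify the support-control condition via the two combing axioms, and then reduce the growth estimate to the word-hyperbolic case already handled in Theorem~\ref{main:1}. The paper is slightly more explicit about the controlling set (writing \(F' = H\tilde{F}H\tilde{F}H\)), while you are slightly more explicit about the Lipschitz-type estimate \(l(p_k(gH)) + 1 \leq D(l(g)+1)\); otherwise the arguments match.
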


\begin{proof}
Let \((p_k) \colon X \to X\) be the combing of polynomial growth of Proposition \ref{prop:combing-p-adic}. As in the proof of Theorem \ref{theorem:main1}, we use the contracting homotopy \[H_n \colon \overline{C}_n(X,\dvr) \to \overline{C}_{n+1}(X,\dvr), \quad (x_0,\dotsc,x_n) \mapsto \sum_{i=0}^\infty H_n(p_i,p_{i+1})(x_0,\dotsc,x_n)\] for the kernel of the augmentation map on bar complex on \(X\). Notice that for each \(n\), \(H_n(p_i,p_{i+1})(x_0,\dotsc,x_n)\) is nonzero if and only if \(p_k(x_i) \neq p_{k+1}(x_j)\), and since \((p_k)\) is eventually constant, there are only finitely many \(k\)'s for which this can happen. Therefore, on each basis vector, at most finitely many summands of \(H_n\) are nonzero. To see that it preserves functions of controlled support, let \(S \subseteq X_n\) be controlled by a finite subset \(F \subseteq X\). Since \((p_k)\) is a combing, there is a finite subset \(\tilde{F} \subseteq X\) such that \(p_k(x_i) \sim_{\tilde{F}} p_{k+1}(x_i)\) and \(p_k(x_i) \sim_{\tilde{F}} p_k(x_j)\) for all \(k \in \N\) and \(x_i \in S\). Then setting \(F' \defeq H\tilde{F}H \tilde{F} H \subseteq X\), we see that \(x \sim_{\tilde{F}} y \sim_{\tilde{F}} z\) implies \(x \sim_{F'} z\). Therefore \((p_i(x_0), \dotsc, p_i(x_j),p_{i+1}(x_j), \dotsc, p_{i+1}(x_n))\) is controlled by \(F'\) for all \(x_i \in S\). Consequently, every summand of \(H_n(x_0,\dotsc,x_n)\) is controlled by \(F'\), for \(x_i \in S\). 

Finally, to verify that \(H_n\) also preserves the growth condition in the bornology on \(\overline{C}_\bullet(G,\dvr)^\an\), let \(S \subseteq \overline{C}_\bullet(G,\dvr)^\an\) be a bounded submodule. Concretely, such a bounded submodule is generated by linear combinations of elements of the form \(\sum_{(x_0,\dotsc,x_n) \in X_n} a_{(x_0,\dotsc,x_n)} (x_0,\dotsc, x_n)\), with \(x_i \sim_F x_j\) for all \(x_i \in S\) for some finite subset \(F \subseteq X\), and \(\nu(a_{(x_0,\dotsc,x_n)}) + 1 \geq c(l(x_0) + l(x_1) + \cdots + l(x_n) + 1)\), where \(l\) is the restriction of the scale on \(G\) to \(X\). The computation that applying \(H_n\) preserves the growth condition, upon enlarging the finite subset to \(F'\), now goes through exactly as in the proof of Theorem \ref{main:1}. 
\end{proof}

\begin{proof}[Proof of Theorem \ref{main:isoco-p-adic}]
Proposition \ref{prop:isocohomological-criterion} applies to any symmetric convolution algebra, in particular, the algebra \(\dvr[G]_K^\dagger\). Combine this with Theorem \ref{thm:main:2}.
\end{proof}

\section{Applications to cyclic homology}\label{section:applications-HC}

In the setup of Section \ref{section:prelim}, recall that the \textit{cohomological dimension} of the exact category \(\mathsf{Mod}_A\) is the supremum of projective dimensions of all \(A\)-modules. Likewise, we may define the cohomological bidimension of \(\mathsf{Mod}_A\), which is simply the cohomological dimension of \(\mathsf{Mod}_{A \otimes A^\op}\). For the group algebras \(A = \dvr[G]\) considered in this article, the cohomological (bi)dimension of the exact category \(\mathsf{Mod}_A\) is finite - this is well-known in the complex case and the proof adapts also to the nonarchimedean setting.

\begin{lemma}\label{lem:global-dim}
Let \(G\) be a word-hyperbolic group or a reductive \(p\)-adic group. Then \(\mathsf{Mod}_{\dvf[G]}\) has finite cohomological bidimension.
\end{lemma}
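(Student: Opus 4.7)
The plan is to exhibit, in each case, a finite-dimensional contractible geometric model for $G$ with well-controlled simplex stabilizers, convert the associated augmented cellular chain complex into a finite projective resolution of the trivial module $\dvf$, and then bootstrap this to bimodules via the identification $\Mod_{\dvf[G] \hot \dvf[G]^\op} = \Mod_{\dvf[G \times G]}$.

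First I would recall the geometric input. For $G$ word-hyperbolic, Bestvina--Mess shows that the Rips complex $P_d(G)$ is contractible for $d$ sufficiently large; $G$ acts simplicially, properly, and cocompactly on it, with finite simplex stabilizers. For $G$ reductive $p$-adic, the Bruhat--Tits building $\mathcal{BT}$ plays the analogous role (as already used in the proof of Proposition \ref{prop:combing-p-adic}), with compact open stabilizers. In both cases we obtain a contractible polyhedral $G$-complex $X$ of some finite dimension $n$.

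Next, I would turn $X$ into a projective resolution. The augmented cellular chain complex $C_\bullet(X,\dvf) \to \dvf \to 0$ is semi-split exact by contractibility and has length $n$; each chain module decomposes, over the finitely many cell orbits, as a direct sum of induced modules $\dvf[G/G_\sigma]$. Since $\dvf$ has characteristic zero and each $G_\sigma$ is either finite or compact open, these induced modules are projective in $\Mod_{\dvf[G]}$ (Maschke in the finite case; the unit-mass Haar averaging argument for smooth representations in the $p$-adic case), so $\mathrm{pdim}_{\dvf[G]}(\dvf) \leq n$.

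Finally, I would pass to bimodules. The product $G \times G$ acts properly and cocompactly on $X \times X$, a contractible polyhedral complex of dimension $2n$ whose stabilizers are products of the original stabilizers, hence again finite or compact open. The same argument yields $\mathrm{pdim}_{\dvf[G \times G]}(\dvf) \leq 2n$. To upgrade this to a uniform bound on all bimodules, take any $\dvf[G \times G]$-module $M$ and a length-$2n$ projective resolution $P_\bullet \to \dvf$ in $\Mod_{\dvf[G \times G]}$; since $\dvf$ is a field and $M$ is automatically flat over $\dvf$, the complex $P_\bullet \otimes_\dvf M \to M$ equipped with the diagonal $G \times G$-action is a resolution of $M$, and each $\mathrm{Ind}_H^{G \times G}(\dvf) \otimes_\dvf M \cong \mathrm{Ind}_H^{G \times G}(M|_H)$ is again projective by the same Maschke/averaging argument applied to $H$. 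Hence every $\dvf[G]$-bimodule has projective dimension at most $2n$, which is the desired bound on the cohomological bidimension.

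The main obstacle I anticipate is the projectivity verification for induced modules in the $p$-adic case: one must work inside the correct subcategory of $\Mod_{\dvf[G]}$ compatible with the bornological framework of Section \ref{subsection:relative-homological-algebra}, and check that the compact-open averaging argument produces semi-split projectivity there — but in characteristic zero this is classical, and in any case the statement is only asserted for $\dvf[G]$, so all bornologies have already been rationalized.
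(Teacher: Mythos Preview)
Your argument is correct and, for the hyperbolic case, more direct than the paper's. The paper does not simply read off a finite projective resolution from the Rips complex; instead it invokes \cite{bader2013efficient} and \cite{puschnigg2002kadison} to build a \(G\)-equivariant chain map \(\theta \colon C_\bullet(G,\dvf) \to C_\bullet(G,\dvf)_R\) vanishing in high degrees, then manufactures from \(H_\bullet(\theta,\mathrm{id})\otimes\mathrm{id}_{\dvf[G]}\) an \(n\)-connection on \(\Omega^\bullet(\dvf[G])\), and only then appeals to \cite{Meyer:HLHA}*{Theorem A.123} to conclude finite bidimension. Your route---contractible Rips complex, finite stabilisers, Maschke, hence a finite projective \(\dvf[G]\)-resolution of \(\dvf\), then bootstrap to bimodules---avoids this detour entirely and lands on the same conclusion. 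For the \(p\)-adic case both you and the paper use the Bruhat--Tits building; the paper tensors \(C_\bullet(\mathcal{BT},\dvf)\) with \(\dvf[G]\) under the diagonal action to obtain a bimodule resolution of \(\dvf[G]\), whereas you pass through \(X\times X\) with the \(G\times G\)-action, which is essentially the same manoeuvre unwound.

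One point worth noting: the paper's proof only exhibits a finite projective bimodule resolution of \(\dvf[G]\) itself, which is all that is needed downstream in Corollary~\ref{cor:HP-finite}. Your final paragraph goes further and bounds the projective dimension of \emph{every} bimodule via the diagonal-tensor trick \(P_\bullet\otimes_\dvf M\), thereby matching the literal statement of the lemma (supremum over all bimodules). This extra step is sound, and the projectivity of \(\mathrm{Ind}_H^{G\times G}(M|_H)\) for finite or compact-open \(H\) in characteristic zero is exactly the point you flag; the paper implicitly relies on the same averaging argument when it asserts that \(C_\bullet(\mathcal{BT},\dvf)\) consists of projective \(\dvf[G]\)-modules, so your caveat is shared rather than a defect of your approach.
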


\begin{proof}
We first consider the word-hyperbolic case. Here the result of \cite{bader2013efficient}*{Theorem 1.7} works for integral coefficients, and the same proof works by changing the base ring to \(\dvf\). Now the same argument as in the proof of \cite{puschnigg2002kadison}*{Lemma 2.5} yields a \(G\)-equivariant chain map \(\theta \colon C_\bullet(G,\dvf) \to C_\bullet(G,\dvf)_R \subseteq C_\bullet(G,\dvf)\) that equals the identity in degree zero and vanishes in sufficiently high degrees. Here the number \(R > 6 \delta + 4\) is chosen such that the augmented Rips complex \(C_\bullet(G,\dvr)_R \to \dvr \to 0\) is contractible, and such an \(R\) always exists by \cite{bridson2013metric}*{Proposition 3.23, pp 469}). Note that the formula for \(\theta\) involves an anti-symmetrisation map, which introduces denominators, which is why we need to work over \(\dvf\). Now consider the operators \[\tilde{\nabla}_\bullet \defeq H_\bullet(\theta_\bullet, \mathrm{id}) \otimes \mathrm{id}_{\dvf[G]} \colon C_\bullet(G,\dvf) \otimes \dvf[G] \to C_{\bullet+1}(G,\dvf)_R \otimes \dvf[G],\] where we equip \(\dvf[G]\) with the inner conjugation action. Since these maps are \(G\)-equivariant, applying the functor \( - \hot_{\dvf[G]} \dvf\) yields linear maps \[\nabla_\bullet \colon \Omega^\bullet(\dvf[G]) \to \Omega^{\bullet + 1}(\dvf[G])\] on the Hochschild complex that satisfy \(\nabla_n \circ b_n + b_{n+1} \circ \nabla_n = \mathrm{id}_{\Omega^{n}(\dvf[G])}\) in degrees \(\geq n\) for some \(n >0\). Using the identification between the \(G\)-coinvariants of \(C_\bullet(G,\dvf) \otimes \dvf[G]\) and \(\Omega^\bullet(G)\), it can be checked that \(\nabla_n \colon \Omega^n(\dvf[G]) \to \Omega^{n+1}(\dvf[G])\) is an \(n\)-connection on \(\dvf[G]\) (see \cite{puschnigg2002kadison}*{Lemma 3.2}). Here the number \(n\) is a fixed number greater than the cardinality of the set \(\setgiven{g \in G}{l(g)<R}\). Finally, by \cite{Meyer:HLHA}*{Theorem A.123}, there is a finite length projective \(\dvf[G]\)-bimodule resolution of \(\dvf[G]\), as required.  

For the reductive \(p\)-adic case, the proof of \cite{meyer2006homological}*{Theorem 29} works integrally and goes through with obvious modifications. Indeed, the cellular complex \(C_\bullet(\mathcal{BT},\dvf)\) of the Bruhat-Tits building is a projective \(\dvf[G]\)-module resolution of the trivial representation of length equal to the dimension of the building \(\mathcal{BT}\). Equipping the modules \(C_\bullet(\mathcal{BT}, \dvf) \otimes \dvf[G]\) with the diagonal \(G\)-action, we get a finite-length projective \(\dvf[G]\)-bimodule resolution of \(\dvf[G]\).  
\end{proof}

\begin{corollary}\label{thm:combable-connection}
Let \(G\) be a \(\delta\)-hyperbolic group or a reductive \(p\)-adic group. Then there exists an \(n\)-connection on \(\dvr[G]^\dagger \otimes \dvf\) (respectively, \(\dvr[G]_K^\dagger \otimes \dvf\)), so that \[\mathsf{HP}(\dvr[G]^\dagger \otimes \dvf) \simeq X^{(n)}(\dvr[G]^\dagger \otimes \dvf),\] (respectively, \(\mathsf{HP}(\dvr[G]_K^\dagger \otimes \dvf) \simeq X^{(n)}(\dvr[G]_K^\dagger \otimes \dvf)\)) for some \(n\).
\end{corollary}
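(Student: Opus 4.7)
The plan is to assemble three results already in place: the isocohomological property of the canonical maps $\dvr[G] \to \dvr[G]^\dagger$ (Theorem \ref{theorem:main1}) and $\dvr[G] \to \dvr[G]_K^\dagger$ (Theorem \ref{main:isoco-p-adic}); the finite cohomological bidimension of $\dvf[G]$ (Lemma \ref{lem:global-dim}); and the abstract reduction $\mathsf{HP}(B) \simeq X^{(n)}(B)$ along an isocohomological embedding $A \to B$ with $A$ of finite projective bidimension, supplied by Corollary \ref{cor:HP-finite} via Theorem \ref{thm:main-abstract}.

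First I would pass from $\dvr$-coefficients to $\dvf$-coefficients. The resolutions $\bar{C}_\bullet(G,\dvr)^\dagger \to \dvr$ of Theorem \ref{main:1} and $\overline{C}_\bullet(X,\dvr)^\dagger \to \dvr$ of Theorem \ref{thm:main:2} are contractible in the bornological sense, and contractibility is preserved under the exact base change $- \otimes_{\dvr} \dvf$. Combined with the fact that base change along $\dvr \to \dvf$ commutes with the completed projective tensor product entering the criterion of Proposition \ref{prop:isocohomological-criterion}, this shows that the induced maps $\dvf[G] \to \dvr[G]^\dagger \otimes \dvf$ and $\dvf[G] \to \dvr[G]_K^\dagger \otimes \dvf$ are isocohomological embeddings in the $\dvf$-linear closed symmetric monoidal category of complete bornological $\dvf$-vector spaces. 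Alternatively, one can simply rerun the proofs of Theorems \ref{theorem:main1} and \ref{main:isoco-p-adic} verbatim with $\dvf$ in place of $\dvr$, since only the bornological structure and the combing-based contraction are used.

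Second, Lemma \ref{lem:global-dim} provides a finite length projective $\dvf[G]$-bimodule resolution of $\dvf[G]$. Applying Corollary \ref{cor:isocohomo-finite} to the isocohomological embedding obtained in the previous step shows that $\dvr[G]^\dagger \otimes \dvf$ (respectively $\dvr[G]_K^\dagger \otimes \dvf$) also has a finite length projective bimodule resolution. Since $\dvf$ has characteristic zero, the ambient category is $\Q$-linear, so Corollary \ref{cor:HP-finite} applies directly: it produces an $n$-connection on the completed algebra (for some $n$ controlled by the projective bidimension of $\dvf[G]$) and yields the chain homotopy equivalence $\mathsf{HP} \simeq X^{(n)}$ as stated.

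The only nontrivial technical step is the base change in the first paragraph, namely verifying that the isocohomological property genuinely descends to the $\dvf$-linear bornological setting; the rest is mechanical assembly of results already proved in the excerpt. Rerunning the combing argument with $\dvf$-coefficients gives the cleanest route if one wants to avoid any subtlety about how $\hot_{\dvr}\dvf$ interacts with the linear growth completion.
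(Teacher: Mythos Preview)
Your proposal is correct and follows the same three-step skeleton as the paper: establish that $\dvf[G] \to \dvr[G]^\dagger \otimes \dvf$ (resp.\ $\dvr[G]_K^\dagger \otimes \dvf$) is isocohomological, invoke Lemma~\ref{lem:global-dim} for finite bidimension of $\dvf[G]$, and conclude via Corollary~\ref{cor:HP-finite}. The only difference is in how you pass to $\dvf$-coefficients: you propose either base-changing the contracting homotopy along $-\otimes_\dvr \dvf$ or rerunning the combing argument over $\dvf$, whereas the paper argues more formally that the tensor product of the isocohomological map $\dvr[G] \to \dvr[G]^\dagger$ with the identity $\dvf \to \dvf$ is again isocohomological (cf.\ the remark at the start of the proof of Theorem~\ref{theorem:crossed-product}). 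Both routes are valid; the paper's is a one-line appeal to a closure property, while yours is more explicit and sidesteps any need to check how the isocohomological condition behaves under change of base category.
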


\begin{proof}
By Theorem \ref{theorem:main1} and \ref{main:isoco-p-adic}, the maps \(\dvr[G] \to \dvr[G]^\dagger\) and \(\dvr[G] \to \dvr[G]_K^\dagger\) are isocohomological. Since the composition of isocohomological embeddings is isocohomological, and the identity \(\dvf \to \dvf\) is obviously isocohomological, the maps \(\dvf[G] \to \dvr[G]^\dagger \otimes \dvf\) and \(\dvf[G] \to \dvr[G]_K^\dagger \otimes \dvf\) are isocohomological. By Lemma \ref{lem:global-dim}, the algebra \(\dvf[G]\) has finite bidimension. Corollary \ref{cor:HP-finite} now implies the result.
\end{proof}

\subsection{Cyclic homology of profinite group algebras}

When \(G\) is a reductive \(p\)-adic group, the algebra \(\dvr[G]_K^\dagger\)  only takes the topology of \(G\) into account in a rather restricted way,  namely through the growth of the length function. For a profinite group, or equivalently, a compact, Hausdorff totally disconnected group \(G = \varprojlim_i G_i\), a better algebra to look at is the Iwasawa algebra \[\Lambda(G) \defeq \varprojlim_{i} \dvr[G_i].\]  It is quite plausible that the canonical map \(\dvr[G] \to \Lambda(G)\) is isocohomological, but we prefer a direct computation of periodic cyclic homology for the Iwasawa algebra, using that the latter commutes with (countable) inverse limits (see below in Corollary \ref{cor:HP-Iwasawa}).

In what follows, we first consider the special case where \(G\) is a finite group, whence the group ring \(\dvr[G]\) is already a dagger algebra. Since \(\dvf\) is a field of characteristic zero by hypothesis, we have the following: 

\begin{theorem}\label{thm:HP-Iwasawa}
Let \(G\) be a finite group. Then \[\mathsf{HH}(\dvf[G]) \simeq \mathsf{HC}(\dvf[G]) \simeq \mathsf{HP}(\dvf[G]) \simeq X(\dvf[G]) \simeq \dvf[G]/[-,-].\] 
\end{theorem}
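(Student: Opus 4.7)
The plan is to exploit that \(\dvf[G]\) is separable over \(\dvf\), in which case the general collapse results of Section~\ref{section:prelim} apply in their strongest form.

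First, since \(\dvf\) has characteristic zero and \(G\) is finite, Maschke's theorem ensures that \(\dvf[G]\) is semisimple; combined with Wedderburn's theorem, it decomposes as a finite product of matrix algebras over division rings that are finite-dimensional over \(\dvf\), and such an algebra is separable over \(\dvf\). Separability is equivalent to the multiplication map \(m \colon \dvf[G] \otimes_\dvf \dvf[G]^{\op} \to \dvf[G]\) admitting a section as a map of \(\dvf[G]\)-bimodules, so \(\dvf[G]\) is projective as a bimodule over itself.

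The constant chain complex concentrated in degree zero is therefore already a projective \(\dvf[G]\)-bimodule resolution of \(\dvf[G]\) of length \(0\), so \(\dvf[G]\) has projective bidimension \(0\). Applying Theorem~\ref{thm:main-abstract} to the identity homomorphism \(\dvf[G] \to \dvf[G]\), which is trivially isocohomological, yields \(\mathsf{HH}(\dvf[G]) \simeq \dvf[G]/[-,-]\), i.e.\ the Hochschild complex is chain homotopy equivalent to \(\dvf[G]/[-,-]\) concentrated in degree zero. Corollary~\ref{cor:HP-finite} applied with \(n = 0\) then gives \(\mathsf{HP}(\dvf[G]) \simeq X^{(0)}(\dvf[G]) = \dvf[G]/[-,-]\).

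For the remaining two identifications, the vanishing of \(\mathsf{HH}_n(\dvf[G])\) in degrees \(\geq 1\) forces the Connes SBI long exact sequence to degenerate, giving inductively \(\mathsf{HC}_{2k}(\dvf[G]) \cong \dvf[G]/[-,-]\) and \(\mathsf{HC}_{2k+1}(\dvf[G]) = 0\), which translates into the chain homotopy equivalence \(\mathsf{HC}(\dvf[G]) \simeq \dvf[G]/[-,-]\) asserted in the theorem; for the Cuntz--Quillen \(X\)-complex \(X(\dvf[G]) = X^{(1)}(\dvf[G]) = \bigl(\dvf[G] \oplus \Omega^1(\dvf[G])/[-,-],\, b + B\bigr)\), the \(0\)-connection furnished by Theorem~\ref{thm:main-abstract} provides a contracting homotopy of the \(\Omega^1\)-part, collapsing \(X(\dvf[G])\) onto its degree-zero piece \(\dvf[G]/[-,-]\). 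The only substantive input is Maschke's theorem; every subsequent step is an immediate application of results already established in Section~\ref{section:prelim}, so I do not foresee a genuine obstacle beyond keeping the different gradings straight.
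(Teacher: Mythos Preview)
Your proposal is correct and follows essentially the same route as the paper: both arguments reduce to the separability of \(\dvf[G]\) via Maschke's theorem (using \(\mathrm{char}\,\dvf = 0\)) and then invoke the standard collapse of the Hochschild and cyclic complexes for separable algebras. The paper simply cites \cite{Meyer:HLHA}*{A.131} for that collapse, whereas you unpack it through Theorem~\ref{thm:main-abstract} and Corollary~\ref{cor:HP-finite}; the content is the same.
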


\begin{proof}
The characteristic zero assumption implies that the order of the group is invertible in \(\dvf\), so that \(\dvf[G]\) is a separable \(\dvf\)-algebra. The required equivalence now follows from \cite{Meyer:HLHA}*{A.131}.
\end{proof}


\begin{corollary}\label{cor:HP-Iwasawa}
Let \(G = \varprojlim_{n \in \N} G_n\) be a light profinite group, and let \(\Lambda(G) = ``\varprojlim " \dvr[G_n]\) be the Iwasawa algebra. Then \[\mathsf{HP}(\Lambda(G) \otimes \dvf) \simeq \varprojlim_n \mathsf{HP}(\dvf[G_n]) \simeq \varprojlim_n X(\dvf[G_n]) \simeq \varprojlim_n (\dvf[G_n]/[-,-]).\]  
\end{corollary}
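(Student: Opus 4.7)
The plan is to establish the three weak equivalences in turn, working from right to left in terms of difficulty: the second and third follow from Theorem \ref{thm:HP-Iwasawa} applied level-wise, while the first requires a commutation of \(\mathsf{HP}\) with countable inverse limits.

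For the second and third equivalences, I would argue as follows. Since \(G_n\) is a finite group and \(\dvf\) has characteristic zero (as a fraction field of a complete DVR, though one needs \(\car\dvf = 0\) to invoke separability of \(\dvf[G_n]\)), Theorem \ref{thm:HP-Iwasawa} yields chain-homotopy equivalences \(\mathsf{HP}(\dvf[G_n]) \simeq X(\dvf[G_n]) \simeq \dvf[G_n]/[-,-]\) for each \(n\). These equivalences are natural in \(G_n\), so they are compatible with the transition maps \(\dvf[G_{n+1}] \to \dvf[G_n]\). Passing to the homotopy inverse limit preserves levelwise weak equivalences between pro-objects (here each term is a finite-dimensional \(\dvf\)-vector space, so the Mittag-Leffler condition is automatic and \(\varprojlim^1\) vanishes), giving
\[\varprojlim_n \mathsf{HP}(\dvf[G_n]) \simeq \varprojlim_n X(\dvf[G_n]) \simeq \varprojlim_n \bigl(\dvf[G_n]/[-,-]\bigr).\]

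For the first equivalence, the strategy is to write out the periodic cyclic complex of \(\Lambda(G)\otimes\dvf\) explicitly using the Hodge tower and identify it with the inverse limit of the analogous complexes for \(\dvf[G_n]\). The key observation is that \(\Lambda(G) = \varprojlim_n \dvr[G_n]\) is, by the lightness hypothesis, a countable inverse limit of free \(\dvr\)-modules of finite rank, with surjective transition maps. Consequently, the differential forms \(\Omega^k(\Lambda(G)\otimes\dvf) \cong \varprojlim_n \Omega^k(\dvf[G_n])\), since the completed projective tensor product commutes with countable inverse limits in this finite-type setting, and the Hochschild boundary \(b\) and Connes boundary \(B\) are compatible with the transition maps. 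The periodic cyclic complex \(\mathsf{HP}(\Lambda(G)\otimes\dvf)\) is the totalisation of the resulting Hodge tower, and since each level in the Hodge tower for \(\dvf[G_n]\) collapses to the \(X\)-complex (in fact to the commutator quotient \(\dvf[G_n]/[-,-]\)) by Theorem \ref{thm:HP-Iwasawa}, the double limit computing \(\mathsf{HP}(\Lambda(G)\otimes\dvf)\) is chain-homotopy equivalent to the single inverse limit \(\varprojlim_n (\dvf[G_n]/[-,-])\).

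The main obstacle is the justification that \(\mathsf{HP}\) of the pro-algebra \(``\varprojlim" \dvr[G_n] \otimes \dvf\) really coincides with the homotopy inverse limit \(\varprojlim_n \mathsf{HP}(\dvf[G_n])\): concretely, one must check that the interchange of \(\mathsf{HP}\) and \(\varprojlim\) is allowed in the bornological setting, which reduces to Mittag-Leffler-type vanishing for the towers of \(\Omega^k(\dvf[G_n])\). The lightness assumption (countability of the system) together with the finite dimensionality of each \(\Omega^k(\dvf[G_n])\) makes this harmless, but a clean appeal to the framework of \cite{Meyer-Mukherjee:HA} or a direct verification via the Milnor short exact sequence is needed. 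Once this commutation is in hand, the remaining equivalences are essentially formal consequences of Theorem \ref{thm:HP-Iwasawa}.
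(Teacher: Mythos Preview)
Your proposal is correct and follows essentially the same line as the paper: the second and third equivalences come from applying Theorem~\ref{thm:HP-Iwasawa} levelwise, and the first from the fact that the completed projective tensor product (hence the construction of \(\mathsf{HP}\)) commutes with projective limits. The paper dispatches the first equivalence in one sentence by citing \cite{Meyer:HLHA}*{Theorem 4.37} for this commutation, whereas you unpack it via the Hodge tower and worry about Mittag--Leffler conditions; note that since \(\Lambda(G) = ``\varprojlim"\,\dvr[G_n]\) is treated as a formal pro-object and \(\mathsf{HP}\) lands in a pro-derived category, the \(\varprojlim^1\) concerns are largely moot in the paper's framework.
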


\begin{proof}
The first equivalence follows from the fact that the completed projective tensor product on pro-bornologically torsionfree \(\dvr\)-modules commutes with projective limits (see for instance \cite{Meyer:HLHA}*{Theorem 4.37}). Now use Theorem \ref{thm:HP-Iwasawa}.
\end{proof}

\subsection{Cyclic homology of the \(p\)-adic noncommutative torus}

Next, we look at the nonarchimedean noncommutative torus defined in the previous section. It turns out that the algebra \(A = \dvr[\Z^2,c]\) has a very short (length \(2\)) free \(A\)-bimodule resolution given by

\[
0 \to A \otimes A \overset{b_2}\to A \otimes A \oplus A \otimes A \overset{b_1}\to A \otimes A \overset{b_0}\to A \to 0,
\] where \begin{gather*}
b_2(f \otimes g) = (\lambda f \otimes U_2 g - f U_2 \otimes g, -f \otimes U_1 g + \lambda f U_1 \otimes g) \\
b_1(f_1 \otimes f_2, f_3 \otimes f_4) = f_1 U_1 \otimes f_2 - f_1 \otimes U_2 f_2 + f_3 U_2 \otimes f_4 - f_3 \otimes U_2 f_4 \\ = f_1 (U_1 \otimes 1 - 1 \otimes U_2)f_2 + f_3(U_2 \otimes 1 - 1 \otimes U_1)f_4,\\
b_0(f \otimes g) = fg.
\end{gather*}   

In other words, the terms in the resolution are obtained by tensoring \(A \otimes A^\op\) with the Koszul resolution on \(\dvr^2\). Since tensoring by \(\dvf\) is exact, we still get a resolution 

\[0 \to \underline{A} \otimes \underline{A} \overset{b_2}\to \underline{A} \otimes \underline{A} \oplus \underline{A} \otimes \underline{A} \overset{b_1}\to \underline{A} \otimes \underline{A} \overset{b_0}\to \underline{A} \to 0,
\] where \(\underline{A} = \dvr[\Z^2, c] \otimes \dvf\). 

Since the canonical map \(\underline{A} \to \underline{A}^\dagger\) is an isocohomological embedding by Theorem \ref{thm:noncommutative-torus-isocohomological}, we deduce that \[0 \to \underline{A}^\dagger \otimes \underline{A}^\dagger \overset{b_2}\to \underline{A}^\dagger \otimes \underline{A}^\dagger \oplus \underline{A}^\dagger \otimes \underline{A}^\dagger \overset{b_1}\to \underline{A}^\dagger \otimes \underline{A}^\dagger \overset{b_0}\to \underline{A}^\dagger \to 0\] is an \(\underline{A}^\dagger\)-bimodule resolution of \(\underline{A}^\dagger\), where the boundary maps \(b_i\) are defined by the same formulas. We now use the resolution above to compute the Hochschild homology of the nonarchimedean noncommutative torus. 

\begin{theorem}\label{thm:Potdevin}
The natural map \(\underline{A} \to \underline{A}^\dagger\) induces the following isomorphisms  
\begin{equation}\label{eq:HH}
\mathrm{HH}_n(\underline{A}) \cong \mathrm{HH}_n(\underline{A}^\dagger) = 
\begin{cases}
\dvf \quad n = 0\\
 \dvf \oplus \dvf \quad n = 1 \\
 \dvf \quad n = 2 \\
 0 \quad n \geq 3
\end{cases}
\end{equation} of \(\dvf\)-vector spaces. 
\end{theorem}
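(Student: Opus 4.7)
The plan is to apply the Hochschild functor $-\otimes_{\underline{A}^e}\underline{A}$ (and its analogue for $\underline{A}^\dagger$) to the length-$2$ free bimodule resolution just exhibited, and to compute the homology of the resulting length-$2$ complex by using a natural $\Z^2$-grading arising from the monomial basis $\{U_1^mU_2^n\}$, which decomposes it into finite-dimensional subcomplexes indexed by bidegrees.

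Under the identification $(f\otimes g)\otimes h\mapsto ghf$, and using $U_2U_1=\lambda U_1U_2$, the Hochschild complex of $\underline{A}$ takes the shape
\[0\to\underline{A}\xrightarrow{d_2}\underline{A}\oplus\underline{A}\xrightarrow{d_1}\underline{A}\to 0,\]
with $d_2(h)=(\lambda U_2h-hU_2,\ \lambda hU_1-U_1h)$ and $d_1(h_1,h_2)=[h_1,U_1]+[h_2,U_2]$. On monomials one computes
\[d_2(U_1^mU_2^n)=\bigl((\lambda^{m+1}-1)U_1^mU_2^{n+1},\ (\lambda^{n+1}-1)U_1^{m+1}U_2^n\bigr),\]
and an analogous formula for $d_1$. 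After a uniform regrading, both differentials preserve a $\Z^2$-grading on the three terms of the complex, and the bidegree-$(a,b)$ subcomplex takes the form $0\to\dvf\to\dvf^2\to\dvf\to 0$ with differentials governed by the scalars $\lambda^a-1$ and $\lambda^b-1$. Assuming $\lambda\in\dvr^*$ is not a root of unity, every subcomplex with $(a,b)\neq(0,0)$ is exact, while the subcomplex at $(0,0)$ has vanishing differentials and contributes $\dvf,\ \dvf^2,\ \dvf$ to the homology in degrees $0,\,1,\,2$. Summing over bidegrees yields the claimed values for $\mathrm{HH}_*(\underline{A})$, and the vanishing for $n\geq 3$ is automatic since the resolution has length $2$.

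For the dagger completion, Theorem~\ref{thm:noncommutative-torus-isocohomological} together with the equivalent characterisations of Theorem~\ref{thm:isocohomological-equivalent} implies that the base change of the Koszul resolution remains a free bimodule resolution of $\underline{A}^\dagger$, so $\mathrm{HH}_*(\underline{A}^\dagger)$ is computed by the analogous complex $0\to\underline{A}^\dagger\to(\underline{A}^\dagger)^2\to\underline{A}^\dagger\to 0$ with identical differentials. The inclusion $\underline{A}\hookrightarrow\underline{A}^\dagger$ is a chain map between the two Hochschild complexes implementing the natural map on homology. The $\Z^2$-grading extends to this complex, and the bidegree-wise computation will carry over once one checks that the formal inverses of $\lambda^a-1$ and $\lambda^b-1$ used to construct preimages define bounded $\dvr$-linear operators in the dagger bornology.

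The main obstacle is thus the analytic estimate $\nu(\lambda^a-1)=O(\log|a|)$ as $|a|\to\infty$ for $\lambda\in\dvr^*$ not a root of unity, which follows from the $p$-adic logarithm in mixed characteristic, or more elementarily from a lifting-the-exponent argument involving the residue characteristic and the ramification index. Since this logarithmic growth is dominated by the linear growth condition defining $\underline{A}^\dagger$, the coefficients produced by inverting $\lambda^a-1$ or $\lambda^b-1$ still satisfy the dagger growth condition, possibly after shrinking the growth constant $c>0$. With this estimate in hand, the contracting homotopies constructed algebraically in bidegrees $(a,b)\neq(0,0)$ lift to the dagger complex, so the inclusion $\underline{A}\hookrightarrow\underline{A}^\dagger$ induces an isomorphism in each bidegree; as only the bidegree-$(0,0)$ summand contributes nontrivially, the stated isomorphism on Hochschild homology follows.
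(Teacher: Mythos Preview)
Your argument is correct and follows the same route as the paper: reduce Hochschild homology to the small length-$2$ complex coming from the Koszul-type resolution, invoke Theorem~\ref{thm:noncommutative-torus-isocohomological} to obtain the analogous complex for $\underline{A}^\dagger$, and then compute coefficient-wise in the monomial basis $U_1^mU_2^n$; the paper writes out the resulting coefficient relations directly rather than packaging them as a $\Z^2$-grading, but it is the same computation. You are in fact more explicit than the paper on the one genuinely analytic point: showing that a cycle in $\ker d_0\subset(\underline{A}^\dagger)^2$ lies in the image of $d_1$ requires dividing coefficients by $\lambda^a-1$, and your estimate $\nu(\lambda^a-1)=O(\log\lvert a\rvert)$ in mixed characteristic is exactly what guarantees the preimage still satisfies the dagger growth condition, whereas the paper simply writes $(\lambda^m-1)^{-1}$ without further comment. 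Note that both arguments tacitly require $\lambda$ not to be a root of unity.
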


\begin{proof}
To compute the Hochschild homology of \(\underline{A}\), we apply the functor \(- \otimes_{\underline{A} \otimes \underline{A}^\op} \underline{A}\) to a projective \(\underline{A}\)-bimodule resolution of \(\underline{A}\). For this, we pick the resolution \(P_\bullet(\underline{A}) = \underline{A} \otimes \underline{A}^\op \otimes \bigwedge^\bullet \dvf^2\) to get the complex 
\begin{equation}\label{eq:HH-2}
\mathsf{HH}(\underline{A}) \simeq [0 \to \underline{A} \overset{d_1}\to \underline{A} \oplus \underline{A} \overset{d_0}\to \underline{A} \to 0],
\end{equation} where \(d_1(a) = (\lambda U_2 a - a U_2, \lambda a U_1  - U_1 a)\) and \(d_0(a,b) = a U_1 - U_1 a + b U_2 - U_2 b\) for \(a\), \(b \in \underline{A}\). Now using that the map \(\underline{A} \to \underline{A}^\dagger\) is isocohomological by Theorem \ref{thm:noncommutative-torus-isocohomological}, we get that \( \underline{A}^\dagger \hot_{\underline{A}} P_\bullet(\underline{A}) \hot_{\underline{A}} \underline{A}^\dagger\) is a projective \(\underline{A}^\dagger\)-bimodule resolution of \(\underline{A}^\dagger\). Taking commutator quotients, we get the complex \begin{equation}\label{eq:HH-3}
\mathsf{HH}(\underline{A}^\dagger) \simeq [0 \to \underline{A}^\dagger \overset{d_1}\to \underline{A}^\dagger \oplus \underline{A}^\dagger \overset{d_0}\to \underline{A}^\dagger \to 0],
\end{equation} with the same formula for the differentials as in (\ref{eq:HH-2}). Furthermore, the complex in (\ref{eq:HH-2}) is the restriction along the natural map \(f \colon \underline{A} \to \underline{A}^\dagger\) of the complex in (\ref{eq:HH-3}), so that we have a chain map between complexes computing the Hochschild homologies of \(\underline{A}\) and \(\underline{A}^\dagger\).  

We now compute these Hochschild homology groups. Actually, we only compute  the Hochschild homology of the more complicated dagger completion \(\underline{A}^\dagger\), as the same computation applies to the purely algebraic case. Clearly the kernel of \(d_1\) is still \(\dvf\), yielding \(\mathrm{HH}_2(\underline{A}^\dagger)\). To compute \(\mathrm{HH}_1(\underline{A}^\dagger)\), let \((f,g) \in \ker(d_0)\). The power series \(f\) and \(g \in \underline{A}^\dagger\) are of the form \(f = \sum_{n,m \in \Z}^\infty a_{n,m} U^n V^m\) and \( g = \sum_{n,m \in \Z}^\infty b_{n,m} U^n V^m\), where \(\nu(a_{n,m}) + 1 \geq  C_f (\abs{n} + \abs{m})\) and \(\nu(b_{n,m}) + 1 \geq  C_g (\abs{n} + \abs{m})\) for \(C_f>0\) and \(C_g>0\). The condition \(fU - U f = gV - Vg\) and the relation \(UV = \lambda VU\) implies that \begin{equation}\label{HH-4}
 (\lambda^{n + 1} - 1)a_{m,n+1} = (1 -\lambda^{m+1})b_{m+1,n}
 \end{equation} for all \(n,m \in \Z\).   For the image of \(d_1\), take an \(f \in \underline{A}^\dagger\) and let \(d_1(f) = (f_1, f_2)\) for \(f = \sum_{n,m \in \Z} a_{m,n} U^m V^n\), and \(f_i = \sum_{n,m \in \Z} a_{m,n}^i U^m V^n\). A simple computation yields the relations 
 \begin{equation}\label{HH-5}
 a_{n,0}^1 = 0 = a_{0,m}^2, \quad (\lambda^m - 1)^{-1} a_{m,n+1}^1 = (1-\lambda^n)^{-1} a_{m+1,n}^2
 \end{equation} for all \(m\), \(n \in \Z\). The terms in \ref{HH-4} and \ref{HH-5} coincide, except the terms \(a_{0,n}\) and \(b_{m,0}\) which are zero in Equation \ref{HH-5} and are allowed to be arbitrary elements of \(\dvf\), yielding the required isomorphism \(\mathrm{HH}_1(\underline{A}^\dagger) \cong \dvf^2\). A similar computation yields that \(\HH_0(\underline{A}^\dagger) \cong \dvf\). \qedhere

\end{proof}

\begin{remark}
Note that Theorem \ref{thm:Potdevin} is a deviation from the archimedean case, where the Hochschild homologies of the algebraic and smooth noncommutative torus coincide if and only if \(\theta \in \R\) parameterising the noncommutative torus satisfies certain Diophantine approximation properties described in (\cite{connes1985non}). 
\end{remark}
As the natural map \(\underline{A} \to \underline{A}^\dagger\) induces an isomorphism in Hochschild homology, we have the following:

\begin{corollary}\label{cor:HP-torus}
We have isomorphisms in cyclic and periodic cyclic homology \[\mathrm{HC}_*(\underline{A}) \to \mathrm{HC}_*(\underline{A}^\dagger), \quad \mathrm{HP}_*(\underline{A}) \to \HP_*(\underline{A}^\dagger)\] for all \(* \in \N\).
\end{corollary}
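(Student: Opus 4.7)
The plan is to derive both statements from the Hochschild isomorphism of Theorem~\ref{thm:Potdevin} by naturality of Connes' \(SBI\) long exact sequence. Concretely, for any unital algebra \(B\) in the \(\Q\)-linear symmetric monoidal category \(\mathsf{CBorn}_{\dvr}\), the mixed complex \((\Omega^\bullet(B), b, B)\) yields a long exact sequence
\[
\cdots \to \mathrm{HH}_n(B) \xrightarrow{I} \mathrm{HC}_n(B) \xrightarrow{S} \mathrm{HC}_{n-2}(B) \xrightarrow{B} \mathrm{HH}_{n-1}(B) \to \cdots
\]
which is functorial in \(B\). Applying this to the canonical algebra homomorphism \(f \colon \underline{A} \to \underline{A}^\dagger\) produces a commutative ladder between the \(SBI\) sequences for \(\underline{A}\) and \(\underline{A}^\dagger\).

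I would then run an induction on \(n\) using the five lemma. The base case \(n \leq 0\) is immediate, since \(\mathrm{HC}_0 = \mathrm{HH}_0\) and \(\mathrm{HC}_{-1} = 0\), and the vertical map on \(\mathrm{HH}_0\) is an isomorphism by Theorem~\ref{thm:Potdevin}. For the inductive step, assuming \(\mathrm{HC}_{n-2}(\underline{A}) \to \mathrm{HC}_{n-2}(\underline{A}^\dagger)\) is an isomorphism, the five lemma applied to the relevant window of the \(SBI\) ladder, together with the vertical Hochschild isomorphisms in degrees \(n-1\) and \(n\) furnished by Theorem~\ref{thm:Potdevin}, forces the vertical map on \(\mathrm{HC}_n\) to be an isomorphism as well.

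For the periodic cyclic statement, I would use that \(\mathrm{HP}_n(B) \cong \varprojlim_k \mathrm{HC}_{n+2k}(B)\) along the periodicity operator \(S\). Since \(\mathrm{HH}_m(\underline{A}) = \mathrm{HH}_m(\underline{A}^\dagger) = 0\) for \(m \geq 3\) by Theorem~\ref{thm:Potdevin}, the \(SBI\) sequence forces \(S \colon \mathrm{HC}_n \to \mathrm{HC}_{n-2}\) to be an isomorphism for both algebras once \(n \geq 3\); hence the towers stabilize in finite stage and taking the inverse limit is compatible with \(f\). Combined with the cyclic homology isomorphism already established, this yields \(\mathrm{HP}_*(\underline{A}) \cong \mathrm{HP}_*(\underline{A}^\dagger)\). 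Alternatively, one may invoke Corollary~\ref{cor:HP-finite} together with Corollary~\ref{thm:combable-connection}-style reasoning to reduce both \(\mathrm{HP}\)'s to the same truncated \(X^{(n)}\)-complex and then deduce the isomorphism from the Hochschild computation directly.

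The main (minor) obstacle is purely bookkeeping: verifying that the \(SBI\) exact sequence and the five lemma are valid in the relative exact category \(\mathsf{Mod}_{B \otimes B^{\op}}\) with semi-split extensions, so that the argument takes place in \(\mathsf{HoKom}(\fC)\). This is part of the standard framework already set up in Section~\ref{section:prelim} via the mixed complex of noncommutative differential forms, so no new ideas are required beyond Theorem~\ref{thm:Potdevin}.
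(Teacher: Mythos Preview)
Your argument is correct and matches the paper's intended reasoning: the corollary is stated immediately after Theorem~\ref{thm:Potdevin} with the one-line justification that the map induces an isomorphism on Hochschild homology, leaving the standard $SBI$/five-lemma deduction implicit. Your write-up simply fills in those routine details, including the stabilization of the $S$-tower from the vanishing of $\mathrm{HH}_m$ for $m\geq 3$, which handles any $\varprojlim^1$ issue.
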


In particular, the cyclic and periodic cyclic homology of the dagger-completed noncommutative torus simply reduces to the computations for the incomplete (algebraic) noncommutative torus in characteristic zero. The easiest way to compute the latter uses the Pimsner-Voiculescu sequence in bivariant \(K\)-theory. Here by bivariant \(K\)-theory, we mean the universal functor \(\mathsf{Alg}_{\dvr}^{\mathrm{tf}} \to \mathsf{kk}^\an\) from the category of torsionfree bornological \(\dvr\)-algebras to a triangulated category, satisfying homotopy invariance for dagger homotopies, stability for the \(\dvgen\)-adic completion of \(\mathbb{M}_\infty\), and excision for extensions of \(\dvr\)-algebras with a bounded linear section (see \cite{mukherjee2022nonarchimedean}).

\begin{theorem}\label{thm:HP-nc-torus}
Let \(A\) be a torsionfree bornological \(\dvr\)-algebra and \(\alpha \colon A \to A\) an algebra automorphism. Then there is a distinguished triangle \[\coma{\Omega} A \to A \overset{1 - \alpha}\to A \overset{\iota}\to A \rtimes_\alpha \Z\] in bivariant analytic \(K\)-theory, where \(\coma{\Omega}\) is the suspension used in \cite{mukherjee2022nonarchimedean} and \(\iota \colon A \to A \rtimes_{\alpha} \Z\) is the canonical inclusion. 
\end{theorem}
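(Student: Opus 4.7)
The plan is to mimic the classical Pimsner--Voiculescu argument in its Toeplitz-extension form, transported to the bivariant analytic $K$-theory $\mathsf{kk}^{\an}$ of \cite{mukherjee2022nonarchimedean}. First, I would build a nonarchimedean \emph{Toeplitz algebra} $\tens(A,\alpha)$ attached to the pair $(A,\alpha)$. Concretely, let $\tens$ denote a dagger (or $\dvgen$\nb-adically complete) version of the Toeplitz algebra generated by an isometry $v$, set up so that it fits into a semi-split extension
\[
0 \longrightarrow \coma{\Mat}(\dvr) \longrightarrow \tens \longrightarrow \dvr[t,t^{-1}]^\dagger \longrightarrow 0.
\]
Twisting $v$ by $\alpha$ one produces $\tens(A,\alpha)$ together with a semi-split extension
\[
0 \longrightarrow A \hot \coma{\Mat}(\dvr) \longrightarrow \tens(A,\alpha) \longrightarrow A \rtimes_\alpha \Z \longrightarrow 0,
\]
the bounded $\dvr$\nb-linear section being the obvious ``compress by the projection onto the non-negative part'' map. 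Existence of this section is what makes the whole argument work in our exact-category framework.

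Second, I would invoke the excision axiom for $\mathsf{kk}^{\an}$ applied to this extension; together with $\coma{\Mat}$-stability this produces a distinguished triangle
\[
\coma{\Omega}(A \rtimes_\alpha \Z) \longrightarrow A \longrightarrow \tens(A,\alpha) \longrightarrow A \rtimes_\alpha \Z
\]
in $\mathsf{kk}^{\an}$. To get the triangle in the statement of the theorem, I need two further ingredients: (i) a canonical $\mathsf{kk}^{\an}$-equivalence $\tens(A,\alpha) \simeq A$ in the middle term, and (ii) identification of the resulting composite map $A \to A$ with $1-\alpha$. Rotating this triangle and using that $\coma{\Omega}$ is the suspension in $\mathsf{kk}^{\an}$ then gives exactly the statement.

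For (i) I would construct an explicit dagger homotopy that retracts $\tens(A,\alpha)$ onto its diagonal copy of $A$, exactly as in the archimedean case: the pullback under the shift operator $v$ acts as a contraction compatible with $\alpha$, and the resulting one-parameter deformation is polynomial in a parameter and therefore extends to a $\dvr[t]^\dagger$\nb-homotopy. Combined with $\coma{\Mat}$-stability (to handle the matrix unit factor) this gives $\tens(A,\alpha) \simeq A$ in $\mathsf{kk}^{\an}$. Step (ii) is the computational heart: one must track what happens to the generator of $A$ under the composition
\[
A \xrightarrow{\;\simeq\;} \tens(A,\alpha) \longrightarrow A \rtimes_\alpha \Z
\]
and compare it with the natural inclusion $\iota$. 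A direct calculation with the isometry $v$ shows that this composition differs from $\iota$ precisely by the automorphism $\alpha$, so the connecting map in the rotated triangle becomes $1-\alpha$.

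The main obstacle, in my view, is step (i), the $\mathsf{kk}^{\an}$-equivalence $\tens(A,\alpha) \simeq A$. Unlike in $C^*$-algebra $KK$-theory, where the analogous fact follows from a well-documented Bott-type argument, here one must carefully choose the bornology on the Toeplitz algebra so that the retraction homotopy is actually bounded and dagger-smooth, and so that the Toeplitz extension above is genuinely semi-split in the complete bornological category. Once the correct dagger Toeplitz algebra has been set up, the remaining steps are essentially formal consequences of the axioms of $\mathsf{kk}^{\an}$ satisfied by excision, homotopy invariance, and $\coma{\Mat}$-stability, following the template of \cite{mukherjee2022nonarchimedean}.
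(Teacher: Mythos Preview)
Your proposal is correct and follows essentially the same Toeplitz-extension argument as Corti\~nas--Thom's Theorem~7.4.1, which is exactly what the paper invokes. The obstacle you flag in step~(i) is milder than you suggest: in the algebraic setting the retraction of $\tens(A,\alpha)$ onto $A$ is realised by a \emph{polynomial} homotopy on the uncompleted Toeplitz algebra with the fine bornology, so boundedness and dagger-continuity come for free and no delicate choice of bornology is required.
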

\begin{proof}
The proof of \cite{Cortinas-Thom:Bivariant_K}*{Theorem 7.4.1} goes through mutatis-mutandis. 
\end{proof}

Since the (bivariant) periodic cyclic homology complex \(\mathsf{HP}( - \otimes \dvf) \colon \mathsf{Alg}_{\dvr}^\tf \to \overleftarrow{\mathsf{Der}(\mathsf{Ind}(\mathsf{Ban}_\dvf))}\) on the category of torsionfree complete bornological \(\dvr\)-algebras satisfies the universal properties defining bivariant \(K\)-theory, we obtain a six-term exact sequence 
\[ 
\begin{tikzcd}
\HP_0(D,A) \arrow{r}{1 - \alpha_*} & \HP_0(D,A) \arrow{r}{\iota_*} & \HP_0(D, A \rtimes_\alpha \Z) \arrow{d}{} \\
\HP_1(D, A \rtimes_\alpha \Z) \arrow{u}{} & \HP_1(D,A) \arrow{l}{\iota_*} & \HP_1(D,A) \arrow{l}{1 - \alpha_*}
\end{tikzcd}
\] of \(\dvf\)-vector spaces, where \(D\) is an arbitrary torsionfree bornological \(\dvf\)-algebra. Of course, bivariant analytic cyclic homology is another invariant satisfying these properties, so the same exact sequence holds for \(\HA_*(D,-)\), \(* = 0,1\). Specialising to the \(\Z\)-action \(\alpha\) on \(A = \dvr[t,t^{-1}]\) defining the \(p\)-adic noncommutative torus, and setting \(D = \dvf\), we obtain the following:

\begin{corollary}\label{cor:HA-HP-nctorus}
For \(* = 0,1\), \[\HP_*(\underline{A}^\dagger) \cong \HP_*(\underline{A}) \cong \dvf^2 \cong \HA_*(\underline{A}) \cong \HA_*(\underline{A}^\dagger).\]
\end{corollary}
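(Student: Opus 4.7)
The strategy is to apply the nonarchimedean Pimsner-Voiculescu triangle of Theorem \ref{thm:HP-nc-torus} to the action $\alpha$ of $\Z$ on $\dvr[t,t^{-1}]$ with $\alpha(t)=\lambda t$, and to reduce everything to the (very simple) cyclic homology of the Laurent polynomial algebra.

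\textbf{Step 1.} By Lemma \ref{lem:equivalent-nc-torus}, $\Z\rtimes_\alpha \dvr[t,t^{-1}]\cong \dvr[\Z^2,c]$ as bornological algebras. Applying Theorem \ref{thm:HP-nc-torus} and then the universal functor $\HP_*(\dvf,-)$ (which satisfies homotopy invariance, stability and excision by \cite{Meyer:HLHA}) yields the six-term exact sequence
\[
\begin{tikzcd}[column sep=small]
\HP_0(\dvf[t,t^{-1}]) \arrow{r}{1-\alpha_*} & \HP_0(\dvf[t,t^{-1}]) \arrow{r}{\iota_*} & \HP_0(\underline{A}) \arrow{d}{} \\
\HP_1(\underline{A}) \arrow{u}{} & \HP_1(\dvf[t,t^{-1}]) \arrow{l}{\iota_*} & \HP_1(\dvf[t,t^{-1}]) \arrow{l}{1-\alpha_*}
\end{tikzcd}
\]

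\textbf{Step 2.} Since $\dvf$ has characteristic zero and $\dvf[t,t^{-1}]$ is a smooth commutative $\dvf$-algebra, the Hochschild-Kostant-Rosenberg theorem gives $\HP_n(\dvf[t,t^{-1}])=\prod_k H^{2k+n}_{dR}(\dvf[t,t^{-1}]/\dvf)$. One checks directly that $H^0_{dR}=\dvf$ (constants) and $H^1_{dR}=\dvf\cdot[dt/t]$, so $\HP_0(\dvf[t,t^{-1}])\cong\HP_1(\dvf[t,t^{-1}])\cong\dvf$.

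\textbf{Step 3.} The automorphism $\alpha$ acts as the identity on the unit of the algebra and sends $[dt/t]$ to $[d(\lambda t)/(\lambda t)]=[dt/t]$, so $\alpha_*=\mathrm{id}$ on both groups; hence $1-\alpha_*=0$. The exact sequence in Step 1 then degenerates into two short exact sequences
\[
0\to \dvf \to \HP_i(\underline{A})\to \dvf\to 0,\qquad i=0,1,
\]
so $\HP_i(\underline{A})\cong \dvf^2$ for $i=0,1$.

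\textbf{Step 4.} The identification $\HP_*(\underline{A})\cong \HP_*(\underline{A}^\dagger)$ is already contained in Corollary \ref{cor:HP-torus}. For the $\HA$ side we repeat the argument: $\HA$ is another invariant satisfying the three universal properties, so Theorem \ref{thm:HP-nc-torus} gives the analogous six-term sequence. Applying it with $A=\dvr[t,t^{-1}]^\dagger$ and $\alpha^\dagger$ and using that $\HA$ of a crossed product agrees with $\HA$ of its dagger completion (so that the right-hand term is $\HA_*(\underline{A}^\dagger)$), it remains to compute $\HA_*(\dvr[t,t^{-1}]^\dagger\otimes\dvf)$. Since $\dvr[t,t^{-1}]^\dagger$ is quasi-free (Hochschild dimension $1$), Goodwillie's theorem in the analytic setting identifies $\HA$ with the $X$-complex, whose homology for this commutative Laurent algebra is again $\dvf$ in each $\Z/2$-degree. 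The same triviality of $\alpha_*$ as in Step 3 then yields $\HA_*(\underline{A}^\dagger)\cong \dvf^2$, and for $\HA_*(\underline{A})$ one uses the incomplete version of the same Pimsner-Voiculescu argument.

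\textbf{Main obstacle.} The formal machinery (Pimsner-Voiculescu, triviality of $1-\alpha_*$, the short exact sequences) is entirely mechanical once the input groups are known. The only genuinely content-bearing step is the computation $\HA_*(\dvr[t,t^{-1}]^\dagger\otimes \dvf)\cong \dvf\oplus\dvf$, which relies on the quasi-freeness of the Laurent algebra and the agreement between analytic cyclic homology and the $X$-complex in Hochschild dimension $\leq 1$; once this is in place the whole computation collapses.
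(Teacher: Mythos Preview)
Your argument is essentially the paper's own: both apply the Pimsner--Voiculescu triangle of Theorem~\ref{thm:HP-nc-torus} to the \(\Z\)-action on \(\dvr[t,t^{-1}]\) and then feed it through \(\HP\) and \(\HA\), which factor through \(\mathsf{kk}^{\an}\). The paper simply records the six-term sequence and states the specialisation, leaving the computation of \(\HP_*(\dvf[t,t^{-1}])\) and the vanishing of \(1-\alpha_*\) implicit; your Steps~2--3 make these explicit and are correct.

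One small redundancy: you invoke Corollary~\ref{cor:HP-torus} to pass from \(\underline{A}\) to \(\underline{A}^\dagger\) on the \(\HP\) side, and then in Step~4 argue separately that \(\HA\) of a crossed product agrees with \(\HA\) of its dagger completion. Both facts are already built into the setup: since \(\HP(-\otimes\dvf)\) and \(\HA\) factor through \(\mathsf{kk}^{\an}\), and in \(\mathsf{kk}^{\an}\) the canonical map \(R \to R^\dagger\) is an equivalence (dagger-homotopy invariance), the identifications \(\HP_*(\underline{A})\cong\HP_*(\underline{A}^\dagger)\) and \(\HA_*(\underline{A})\cong\HA_*(\underline{A}^\dagger)\) come for free, without appealing to the Hochschild computation. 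Likewise, you do not need to reprove \(\HA_*(\dvr[t,t^{-1}]^\dagger\otimes\dvf)\cong\dvf\) via quasi-freeness: in \(\mathsf{kk}^{\an}\) one has \(\dvr[t,t^{-1}]\simeq\dvr[t,t^{-1}]^\dagger\), so it suffices to know \(\HA_*\) of the incomplete Laurent algebra (e.g.\ by a further application of PV to the trivial \(\Z\)-action on \(\dvr\)). None of this affects correctness; it just streamlines Step~4.
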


\begin{bibdiv}
  \begin{biblist}
    \bibselect{References}
  \end{biblist}
\end{bibdiv}

\end{document}